\documentclass[a4paper, 11pt]{article}

\setlength{\topmargin}{0cm}
\setlength{\headheight}{0cm}
\setlength{\oddsidemargin}{0cm}
\setlength{\evensidemargin}{0cm}
\setlength{\textheight}{23cm}
\setlength{\textwidth}{16cm}
\setlength{\parindent}{0.5cm}

\raggedbottom

\usepackage{amsmath,amssymb,amsthm,centernot,indentfirst,mathrsfs, mathtools}
\usepackage{color}
\usepackage{ascmac}

\newtheorem{theorem}{Theorem}[section]
\newtheorem{lemma}[theorem]{Lemma}
\newtheorem{proposition}[theorem]{Proposition}

\newtheorem{corollary}[theorem]{Corollary}
\theoremstyle{definition}
\newtheorem{definition}[theorem]{Definition}

\theoremstyle{remark}
\newtheorem*{remark}{Remark}

\allowdisplaybreaks[4]
\numberwithin{equation}{section}



\newcommand{\R}{\mathbb{R}}

\newcommand{\Ent}{\mathrm{Ent}}

\newcommand{\K}{\mathcal{K}}

\begin{document}

\title{Analytic aspects of the dilation inequality for symmetric convex sets in Euclidean spaces}
\author{Hiroshi Tsuji\footnote{Department of Mathematics, Osaka University, Osaka 560-0043, Japan (tsuji@cr.math.sci.osaka-u.ac.jp), 
	2020 {\it Mathematics Subject Classification: 28A75, 26D10, 60E15,}  
	{\it Key words and phrases:} dilation, relative entropy, log-Sobolev inequality, Cram\'{e}r--Rao inequality, Kahane--Khintchine inequality, deviation inequality,  isoperimetry.}}
\date{}
\maketitle

\begin{abstract}
We discuss an analytic form of the dilation inequality for symmetric convex sets in Euclidean spaces, which is a counterpart of analytic aspects of Cheeger's isoperimetric inequality. 
We show that the dilation inequality for symmetric convex sets is equivalent to a certain bound of the relative entropy for symmetric quasi-convex functions, which is close to the logarithmic Sobolev inequality or Cram\'{e}r--Rao inequality. 
As corollaries, we investigate the reverse Shannon inequality, logarithmic Sobolev inequality, Kahane--Khintchine inequality, deviation inequality and isoperimetry. 
We also give new probability measures satisfying the dilation inequality for symmetric convex sets via bounded perturbations and tensorization. 
\end{abstract}
\tableofcontents

\section{Introduction}

Cheeger's isoperimetric inequality with respect to a probability measure $\mu$ on $\R^n$ is one of the most important geometric inequalities in geometry and geometric analysis. 
Cheeger \cite{Ch70} and Maz'ya \cite{Ma62-1, Ma62-2} showed that Cheeger's isoperimetric inequality gives the spectral gap of the Laplace--Beltrami operator induced by $\mu$. 
Conversely, Buser \cite{Bu82} (see also Ledoux \cite{Le94}) also proved that the spectral gap, or equivalently the Poincar\'{e} inequality, gives Cheeger's isoperimetric inequality. 
Hence we can naturally regard the Poincar\'{e} inequality as an analytic form of Cheeger's isoperimetric inequality. 
Moreover, Bobkov--Houdr\'{e} \cite{BH97} also gave an equivalence between Cheeger's isoperimetric inequality and the $(1,1)$-Poincar\'{e} inequality, and thus the $(1,1)$-Poincar\'{e} inequality is another analytic aspect of Cheeger's isoperimetric inequality. 

On the other hand, Nazarov--Sodin--Volberg \cite{NSV02} showed a new sharp isoperimetric-type inequality for a log-concave probability measure on $\R^n$, which we call the dilation inequality in this paper. 
This inequality is originally given by Borell \cite{Bor74} and investigated by many researchers in \cite{LS93, Gu99, NSV02, B03, B07, BN07, Fr09, Kl17, Ts21} where the sharpness and generalization of the dilation inequality are discussed. 
Here a measure $\mu$ on $\R^n$ is log-concave if for any compact subsets $A, B \subset \R^n$, it holds 
$$
\mu((1-t) A + tB) \ge \mu(A)^{1-t} \mu(B)^t, \;\;\; \forall t \in (0, 1), 
$$
where $(1-t) A + tB \coloneqq \{ (1-t)a + tb \mid a \in A, b \in B\}$ is the Minkowski sum. 
For a Borel subset $A \subset \R^n$ and $\varepsilon \in (0,1)$, we define an $\varepsilon$-dilation of $A$ by 
\begin{equation}\label{e:Dil}
A_\varepsilon \coloneqq A \cup \left\{ x \in \R^n\, \Bigg{|}\, \exists y \in \R^n,  \int_0^1 \mathbf{1}_A((1-t)x+ty)\, dt > 1-\varepsilon \right\}
\end{equation}
and define a dilation area of $A$ by 
\begin{equation}\label{e:DilArea}
\mu^*(A) \coloneqq \liminf_{\varepsilon \downarrow 0} \frac{\mu(A_\varepsilon) - \mu(A) }{\varepsilon}. 
\end{equation}
An $\varepsilon$-dilation $A_\varepsilon$ is a counterpart of an $\varepsilon$-neighborhood $[A]_\varepsilon \coloneqq \{x \in \R^n \mid \exists a \in A, |x -a| < \varepsilon\}$, and a dilation area $\mu^*(A)$ is a counterpart of the $\mu$-perimeter of $A$ (or the Minkowski content of $A$ with respect to $\mu$) given by 
\begin{equation}\label{e:Area}
\mu^+(A) \coloneqq \liminf_{\varepsilon \downarrow 0} \frac{\mu([A]_\varepsilon) - \mu(A) }{\varepsilon}. 
\end{equation}
Then Nazarov--Sodin--Volberg \cite{NSV02} (see also \cite{Ts21}) showed that every log-concave probability measure $\mu$ on $\R^n$ satisfies 
\begin{equation}\label{e:Borell}
\mu^*(A) \ge - (1-\mu(A) ) \log (1-\mu(A))
\end{equation}
for any Borel subset $A \subset \R^n$.
We note that it is natural to consider \eqref{e:Borell} as a counterpart of Cheeger's isoperimetric inequality, namely  
\begin{equation}\label{e:ChIso}
\mu^+(A) \ge \kappa  \min\{ \mu(A), 1-\mu(A)\}
\end{equation}
for any Borel subset $A \subset \R^n$ with some $\kappa>0$. 
In fact, Kannan--Lov\'{a}sz--Simonovits \cite{KLS95} (see also \cite{B99, B07}) showed that every log-concave probability measure $\mu$ on $\R^n$ also satisfies \eqref{e:ChIso} with some positive constant depending on $\mu$. 
Moreover, on one hand, we see that the two-sided exponential measure $d\nu_2(x) = \frac12 e^{-|x|}\, dx$ on $\R$ satisfies $\nu_2^+((-\infty, x)) = \min\{\nu_2((-\infty, x), \nu_2((x, \infty))\}$ for any $x \in \R$, on the other hand, the one-sided exponential measure $d\nu_1(x) = e^{-x}\, dx$ on $(0, \infty)$ satisfies \eqref{e:Borell} with equality for $A=(0, x)$ for any $x>0$. 
Thus the both inequalities \eqref{e:Borell} and \eqref{e:ChIso} are sharp in the class of log-concave probability measures.

Our main goal in this paper is to investigate an analytic aspect of the dilation inequality \eqref{e:Borell} as the Poincar\'{e} inequality and $(1,1)$-Poincar\'{e} inequality are analytic forms of Cheeger's isoperimetric inequality. 
To this end, however the definition of the dilation \eqref{e:Dil} is complicated, and thus as the first step, we focus only on symmetric open convex sets $K \subset \R^n$ (we say that $K$ is symmetric if $K = -K$). 
In this case, it is known (see \cite{Fr09}) that an $\varepsilon$-dilation of $K$ can be represented simply as 
$$
K_\varepsilon = \frac{1+\varepsilon}{1-\varepsilon} K.
$$
From this property, we consider \eqref{e:Dil} as a generalization of the dilation. 
We also remark that on one hand, we have 
\begin{equation}\label{e:SpeDilK}
K_\varepsilon 
= K + \frac{2\varepsilon}{1-\varepsilon} K, 
\end{equation}
on the other hand, the $\varepsilon$-neighborhood of $K$ can be rewritten as 
$$
[K]_\varepsilon = K + \varepsilon {\rm B}_2^n, 
$$
where ${\rm B}_2^n \coloneqq \{ x \in \R^n \mid |x| < 1\}$ is the standard Euclidean open unit ball. 
Therefore the difference between the $\varepsilon$-neighborhood and $\varepsilon$-dilation is clear.

We also note that our restriction to symmetric open convex sets is enough to develop theory related to the dilation inequality. 
In fact, the dilation inequality is originally known as Borell's lemma such that 
$$
\mu( \R^n \setminus tK) \le \left( \frac{1-\mu(K)}{\mu(K)} \right)^{\frac{t+1}2} \mu(K), \;\;\; t \ge 1
$$
for any log-concave probability measure $\mu$ on $\R^n$ and symmetric convex set $K \subset \R^n$. 
In particular, it follows from Borell's lemma that 
\begin{equation}\label{e:ConcBorell}
\mu(\R^n \setminus tK) \le c e^{-Ct}, \;\;\; t \ge 1
\end{equation} 
whenever $\mu(K) \ge 2/3$, where $c, C>0$ are absolute constants. 
This inequality seems concentration of measure with respect to dilations, and we can observe the same inequality from \eqref{e:Borell} for log-concave probability measures (see \cite[Theorem 4.1]{Ts21}). 
By using \eqref{e:ConcBorell}, various geometric and analytic inequalities are induced like the Kahane--Khintchine inequality in \cite{Bor74, Gu99} (see also \cite{MS}) and Cheeger's isoperimetric inequality in \cite{B99}. 
We can see other applications of \eqref{e:ConcBorell} in \cite{BGVV14}.

To describe our results in this paper, we introduce some notions. 
Let $\Omega \subset \R^n$ be a symmetric convex domain, and let $\mathcal{K}_s^n(\Omega)$ be the set of all nonempty, symmetric open convex sets in $\R^n$. 

\begin{definition}\label{d:DilKappa}
A probability measure $\mu$ supported on a symmetric convex domain $\Omega \subset \R^n$ satisfies the dilation inequality for $\mathcal{K}_s^n(\Omega)$ with $\kappa>0$ if 
\begin{equation}\label{e:DilKappa}
\mu^*(K) \ge - \kappa (1-\mu(K)) \log (1-\mu(K)), \;\;\; \forall K \in \mathcal{K}_s^n(\Omega).
\end{equation}
\end{definition}

We may replace $\mathcal{K}_s^n(\Omega)$ by $\mathcal{K}_s^n(\R^n)$ in \eqref{e:DilKappa} since by the definition \eqref{e:Dil}, it holds that $(K \cap \Omega)_\varepsilon \subset K_\varepsilon$ for any $K \in \K_s^n(\R^n)$ and $\varepsilon \in (0,1)$, and thus $\mu^*(K \cap \Omega) \le \mu^*(K)$ by $\mu(K) = \mu(K \cap \Omega)$, associated with $K \cap \Omega \in \K_s^n(\Omega)$. 
We also remark that $\mu$ may not be symmetric even if its support is symmetric. 
As we have already mentioned, all log-concave probability measures on $\Omega$ (and thus on $\R^n$) satisfy the dilation inequality for $\K_s^n(\Omega)$ with $\kappa=1$. 
In particular, important examples are symmetric log-concave probability measures on $\R$ and the standard Gaussian measure $d\gamma_n \coloneqq (2\pi)^{- n/2}e^{-|x|^2/2}\, dx$ on $\R^n$. 
We can observe that these measures satisfy \eqref{e:DilKappa} with $\kappa=2$ (see Appendix). 

Next, we introduce the relative entropy. For a nonnegative Borel function $f$ and a probability measure $\mu$ on $\Omega$ with $\int_{\Omega}f\, d\mu<+ \infty$, we define the relative entropy of $f$ with respect to $\mu$ by 
$$
\Ent_\mu(f) \coloneqq \int_{\Omega} f \log f\, d\mu - \int_{\Omega} f\, d\mu \log \int_{\Omega} f\, d\mu, 
$$
where we put $0 \log 0 \coloneqq 0$. 
Jensen's inequality implies that the relative entropy is nonnegative, and is 0 if and only if $f$ is constant $\mu$-a.e., on $\Omega$.

The following functional inequalities, which are special cases we will show in Theorem \ref{t:FDI},   follow from \eqref{e:DilKappa}. 
\begin{theorem}\label{t:FDIIntro}
Let $\mu$ be a probability measure supported on a symmetric convex domain $\Omega$ and let $f \colon \Omega \to [0, \infty)$ be a continuous and symmetric function with $f \in L^1(\mu)$. 
We assume that $\mu$ satisfies the dilation inequality for $\K_s^n(\Omega)$ for some $\kappa>0$. 
\begin{itemize}
\item[(1)] If $f$ is convex on $\Omega$ with $\int_\Omega \inf_{y \in \partial f(x)} \langle x, y \rangle \, d\mu(x) < + \infty$, then it holds 
\begin{equation}\label{e:SpeFDI}
\Ent_\mu(f) \le \frac2\kappa \int_\Omega \inf_{y \in \partial f(x)} \langle x, y \rangle \, d\mu(x), 
\end{equation}
where $\partial f(x)$ is the subdifferential of $f$ at $x \in \Omega$. 
\item[(2)]
If $f$ is locally Lipschitz on $\{x \in \R^n \mid f(x) > f(0)\}$ and quasi-convex on $\Omega$, 
then it holds 
\begin{equation}\label{e:SpeFDI2}
\Ent_\mu(f) \le \frac2\kappa \int_{\{f>f(0)\}} |x| |\nabla f(x)| \, d\mu(x).
\end{equation}
In particular, when $f$ is locally Lipschitz and quasi-convex on $\Omega$, 
we have 
\begin{equation}\label{e:SpeFDI2.5}
\Ent_\mu(f) \le \frac2\kappa \int_{\Omega} |x| |\nabla f(x)| \, d\mu(x).
\end{equation}
In addition, when $f$ is $C^1$ on $\Omega$, 
then we have 
\begin{equation}\label{e:SpeFDI3}
\Ent_\mu(f) \le \frac2\kappa \int_\Omega \langle x,\nabla f(x) \rangle \, d\mu(x).
\end{equation}
\end{itemize} 
\end{theorem}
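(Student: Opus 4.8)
\emph{Strategy.} I would reduce all four inequalities to the \emph{level sets} $K_s\coloneqq\{x\in\Omega:f(x)<s\}$. Since $f$ is continuous, symmetric and quasi-convex (convexity implies quasi-convexity), each $K_s$ is symmetric, open and convex; as a nonempty symmetric convex set contains $0$, we have $f(0)=\min_\Omega f$, so $K_s\in\K_s^n(\Omega)$ for $s>f(0)$ and $K_s=\varnothing$ for $s\le f(0)$. The same property shows $t\mapsto f(tx)$ is non-decreasing on $[0,1]$, i.e.\ $f(tx)\le f(x)$. The proof then rests on two pieces: a geometric estimate obtained by integrating the dilation inequality \eqref{e:DilKappa} over all levels $s$, and the elementary layer-cake bound
\[
\Ent_\mu(f)\ \le\ -\int_0^\infty \mu(\{f\ge s\})\log\mu(\{f\ge s\})\,ds .
\]

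\emph{The geometric step.} Fix $\varepsilon\in(0,1)$ and put $\lambda_\varepsilon\coloneqq\tfrac{1-\varepsilon}{1+\varepsilon}$. By the representation $K_\varepsilon=\tfrac{1+\varepsilon}{1-\varepsilon}K$ recalled above, $(K_s)_\varepsilon=\{x:\lambda_\varepsilon x\in K_s\}$; since $\mu$ is supported on $\Omega$ (which is star-shaped about $0$, so $\lambda_\varepsilon x\in\Omega$ for $x\in\Omega$) and $f(\lambda_\varepsilon x)\le f(x)$,
\[
\mu\big((K_s)_\varepsilon\big)-\mu(K_s)=\mu\big(\{x\in\Omega:f(\lambda_\varepsilon x)<s\le f(x)\}\big).
\]
Integrating over $s\in\R$ and using Tonelli's theorem (the admissible $s$ form an interval of length $f(x)-f(\lambda_\varepsilon x)\ge0$),
\[
\int_{\R}\Big(\mu\big((K_s)_\varepsilon\big)-\mu(K_s)\Big)\,ds=\int_{\Omega}\big(f(x)-f(\lambda_\varepsilon x)\big)\,d\mu(x).
\]
Dividing by $\varepsilon$, invoking Fatou's lemma (all integrands are $\ge0$ since $(K_s)_\varepsilon\supseteq K_s$) and the definition \eqref{e:DilArea} of $\mu^*$, and then applying \eqref{e:DilKappa} to each $K_s$ together with $1-\mu(K_s)=\mu(\{f\ge s\})$ (both sides vanishing when $s\le f(0)$), one obtains
\[
-\kappa\int_0^\infty\mu(\{f\ge s\})\log\mu(\{f\ge s\})\,ds\ \le\ \int_{\R}\mu^*(K_s)\,ds\ \le\ \liminf_{\varepsilon\downarrow0}\int_{\Omega}\frac{f(x)-f(\lambda_\varepsilon x)}{\varepsilon}\,d\mu(x).
\]

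\emph{Passing to the limit; the main obstacle.} Write $\frac{f(x)-f(\lambda_\varepsilon x)}{\varepsilon}=\frac{2}{1+\varepsilon}\cdot\frac{f(x)-f(\lambda_\varepsilon x)}{1-\lambda_\varepsilon}$ with $\frac{2}{1+\varepsilon}\to2$. In case (1), $t\mapsto f(tx)$ is convex for each $x$, so the difference quotients $\frac{f(x)-f(\lambda_\varepsilon x)}{1-\lambda_\varepsilon}$ converge monotonically, as $\varepsilon\downarrow0$, to the left derivative of $t\mapsto f(tx)$ at $t=1$, which equals $\inf_{y\in\partial f(x)}\langle x,y\rangle\,(\ge0)$ by the formula for directional derivatives of convex functions; by monotone convergence, $\liminf_{\varepsilon\downarrow0}\int_\Omega\frac{f(x)-f(\lambda_\varepsilon x)}{\varepsilon}\,d\mu=2\int_\Omega\inf_{y\in\partial f(x)}\langle x,y\rangle\,d\mu$. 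Inserting this and the entropy bound into the geometric step and dividing by $\kappa$ yields \eqref{e:SpeFDI}, which is \eqref{e:SpeFDI3} when $f\in C^1$ (then $\partial f(x)=\{\nabla f(x)\}$). In case (2), $f$ is only quasi-convex but locally Lipschitz on $\{f>f(0)\}$; since the difference quotients vanish on $\{f=f(0)\}$ one works on $\{f>f(0)\}$, estimates $f(x)-f(\lambda_\varepsilon x)\le\int_{\lambda_\varepsilon}^1|x|\,|\nabla f(tx)|\,dt$, and passes to the limit to reach $\frac2\kappa\int_{\{f>f(0)\}}|x|\,|\nabla f(x)|\,d\mu$, i.e.\ \eqref{e:SpeFDI2}; removing the restriction gives \eqref{e:SpeFDI2.5}, and retaining $\langle x,\nabla f(tx)\rangle$ in place of $|x|\,|\nabla f(tx)|$ when $f\in C^1$ gives \eqref{e:SpeFDI3}. \textbf{This last limiting step in the quasi-convex case is the crux}: without convexity there is no monotone convergence, so one must justify passing the limit inside the integral via Tonelli's theorem applied to the nonnegative radial derivative $\tfrac{d}{dt}f(tx)$ together with a careful use of the local Lipschitz bound, and without assuming $\mu\ll\mathcal L^n$.

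\emph{The entropy inequality.} Finally, $t\log t=t+\int_0^t\log u\,du$ gives, with $m\coloneqq\int_\Omega f\,d\mu$ and $\phi(s)\coloneqq\mu(\{f>s\})$ (equal to $\mu(\{f\ge s\})$ off an at most countable set),
\[
\Ent_\mu(f)=\int_0^\infty\phi(s)\log s\,ds-(m\log m-m),
\]
so it suffices that $\int_0^\infty\phi(s)\log\big(s\phi(s)\big)\,ds\le m\log m-m$. Writing $\phi=mb$ with $b$ a non-increasing probability density on $(0,\infty)$ and letting $\tau\colon[0,1]\to[0,\infty]$ be the inverse of $u\mapsto\int_0^u b$, the change of variables $s=\tau(u)$ turns the left-hand side into $m\log m+m\int_0^1\log\frac{\tau(u)}{\tau'(u)}\,du$, so the claim reduces to $\int_0^1\log\frac{\tau(u)}{\tau'(u)}\,du\le-1=\int_0^1\log u\,du$; this holds because $\tau$ is convex with $\tau(0)=0$, whence $\tau(u)\le u\,\tau'(u)$. (Equality forces $f$ to be of the form $c\,\mathbf 1_E$ plus a constant, matching the extremality discussion for the dilation inequality.)
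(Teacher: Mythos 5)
Your geometric step --- integrating the dilation inequality over the level sets $K_s=\{f<s\}$ --- reproduces the paper's co-area inequality (Proposition~\ref{p:CoareaForm} with $p=1$), and your change-of-variables proof of the layer-cake entropy bound, reducing everything to convexity of $\tau=B^{-1}$ and the inequality $\tau(u)\le u\tau'(u)$, is a genuinely different (and neat) alternative to the paper's use of the variational/dual formula \eqref{e:DualEnt} for $\Ent_\mu$. Case~(1) also closes: for convex $f$ the secant slopes $\frac{f(x)-f(\lambda_\varepsilon x)}{1-\lambda_\varepsilon}$ increase monotonically to $\inf_{y\in\partial f(x)}\langle x,y\rangle$ as $\varepsilon\downarrow 0$ (matching Proposition~\ref{p:ClassQC2}), so monotone convergence handles the limit and $\int_\Omega\inf_{y\in\partial f}\langle x,y\rangle\,d\mu<\infty$ guarantees finiteness.

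The gap is exactly where you flag the crux, and it is more serious than your phrasing suggests. In case~(2) you need the $\limsup$-type bound
\[
\liminf_{\varepsilon\downarrow 0}\int_\Omega\frac{f(x)-f(\lambda_\varepsilon x)}{\varepsilon}\,d\mu(x)\ \le\ 2\int_{\{f>f(0)\}}|x|\,|\nabla f(x)|\,d\mu(x),
\]
and Fatou gives the inequality in the \emph{wrong} direction. Applying Tonelli to the pointwise estimate $f(x)-f(\lambda_\varepsilon x)\le\int_{\lambda_\varepsilon}^1|x|\,|\nabla f(tx)|\,dt$ rewrites the left-hand side as $\liminf_{\varepsilon\downarrow 0}\frac1\varepsilon\int_{\lambda_\varepsilon}^1 H(t)\,dt$ with $H(t)\coloneqq\int_{\{f(t\cdot)>f(0)\}}|x|\,|\nabla f(tx)|\,d\mu(x)$, i.e.\ an average of $H$ over $[\lambda_\varepsilon,1]$ times $\frac{1-\lambda_\varepsilon}{\varepsilon}\to 2$. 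There is no reason this average is bounded by $H(1)$: the endpoint $t=1$ need not be a Lebesgue point of $H$, and when $\mu$ is singular (say an atom at $x_0$) the quantity $|\nabla f(tx_0)|$ for $t<1$ can be far larger than $|\nabla f(x_0)|$, so $H$ may have a spike just left of $1$. What is missing is a \emph{uniform} $L^1(\mu)$ domination of the difference quotients in $\varepsilon$ --- precisely what the paper's definition of $\mathrm{QC}(\Omega,\mu)$ via condition \eqref{e:CondQC} encodes, and which justifies the reverse-Fatou step in Proposition~\ref{p:CoareaForm}. The paper secures this domination only after truncating to $f_{\ell,m}=\max\{\min\{f,\ell\},f(0)+1/m\}$, exhausting $\Omega$ by bounded $\Omega_k$, and invoking compactness of $\overline{\Omega_k}$ to get a uniform local Lipschitz constant (Proposition~\ref{p:ClassQC}); it then recovers the stated inequality via lower semicontinuity of $\Ent_\mu$ and monotone convergence in $\ell,m,k$. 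Your proposal needs this truncation/exhaustion machinery (or an equivalent dominating function) before the Tonelli step can be turned into the required inequality.
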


Here we say that a function $f \colon \Omega \to \R$ is quasi-convex if $\{ x \in \Omega \mid f(x) < \lambda \}$ is a convex set for any $\lambda \in \R$. In particular, quasi-convexity is a generalization of convexity. 
An important example is $|\cdot|^p$ for $p>0$, which is continuous and symmetric quasi-convex on $\R^n$ and locally Lipschitz on $\R^n \setminus \{0\}$. 
In addition, $|\cdot|^p$ is locally Lipschitz and convex on $\R^n$ when $p \ge 1$. 
See Section \ref{s:FDI} for more details and other examples of quasi-convex functions. 

We emphasize that Theorem \ref{t:FDIIntro} is the special case of Theorem \ref{t:FDI} where we will show a more general inequality for  functions in a more wider class. 
Moreover, we will actually confirm that Theorem \ref{t:FDI} can recover the dilation inequality \eqref{e:DilKappa} in Theorem \ref{t:ReConstDI}. 
In this sense, our theorem gives the optimal estimate. 

As the first application of Theorem \ref{t:FDIIntro}, we obtain the reverse Shannon inequality. 
\begin{corollary}\label{c:RevShannon}
Let $h \in C^1(\R^n)$ be a nonnegative function such that $h/\gamma_n$ is symmetric quasi-convex function with $\int_{\R^n} h(x)\, dx=1$ and 
\begin{equation}\label{e:TechAss}
\lim_{|x|\to +\infty} |x| h(x) = 0.
\end{equation}
Then it holds that 
\begin{equation}\label{e:RevShannon}
\int_{\R^n} h\log h\, dx 
\le
\frac12 \int_{\R^n} |x|^2h(x)\, dx - \frac n2 \log (2\pi e^2). 
\end{equation}
\end{corollary}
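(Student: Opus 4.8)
The plan is to apply Theorem~\ref{t:FDIIntro}(2), in its $C^1$ form \eqref{e:SpeFDI3}, to the measure $\mu=\gamma_n$ and the function $f\coloneqq h/\gamma_n$, and then to translate the resulting inequality into \eqref{e:RevShannon} by a short computation together with one integration by parts.

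First, I would verify the hypotheses. Since $h\in C^1(\R^n)$ and $\gamma_n$ is smooth and strictly positive, $f=h/\gamma_n$ lies in $C^1(\R^n)$ (in particular it is locally Lipschitz); it is nonnegative, symmetric and quasi-convex by assumption, and $\int_{\R^n}f\,d\gamma_n=\int_{\R^n}h\,dx=1$, so $f\in L^1(\gamma_n)$. The standard Gaussian measure satisfies the dilation inequality for $\K_s^n(\R^n)$ with $\kappa=2$ (see the Appendix), so $2/\kappa=1$ and \eqref{e:SpeFDI3} reads
\[
\Ent_{\gamma_n}(f)\le\int_{\R^n}\langle x,\nabla f(x)\rangle\,d\gamma_n(x).
\]

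Next I would evaluate both sides. Because $\int_{\R^n}f\,d\gamma_n=1$, the left-hand side equals $\int_{\R^n}f\log f\,d\gamma_n=\int_{\R^n}h\log(h/\gamma_n)\,dx$, and substituting $\log\gamma_n(x)=-\tfrac n2\log(2\pi)-\tfrac12|x|^2$ gives
\[
\Ent_{\gamma_n}(f)=\int_{\R^n}h\log h\,dx+\frac n2\log(2\pi)+\frac12\int_{\R^n}|x|^2h(x)\,dx.
\]
For the right-hand side, $\nabla\gamma_n=-x\,\gamma_n$ yields $\langle x,\nabla f(x)\rangle\,\gamma_n(x)=\langle x,\nabla h(x)\rangle+|x|^2h(x)$, so
\[
\int_{\R^n}\langle x,\nabla f\rangle\,d\gamma_n=\int_{\R^n}\langle x,\nabla h\rangle\,dx+\int_{\R^n}|x|^2h\,dx,
\]
and an integration by parts (the divergence theorem for the field $x\mapsto h(x)\,x$) gives $\int_{\R^n}\langle x,\nabla h\rangle\,dx=-n\int_{\R^n}h\,dx=-n$. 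Feeding these three identities into the inequality above and rearranging, \eqref{e:RevShannon} follows once we note that $\frac n2\log(2\pi)+n=\frac n2\log(2\pi e^2)$.

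The one point that needs care is the integration by parts $\int_{\R^n}\langle x,\nabla h\rangle\,dx=-n$: one has to control the boundary flux $\int_{\partial B_R}h(x)\langle x,\nu\rangle\,dS=R\int_{\partial B_R}h\,dS$ as $R\to\infty$. When $n=1$ this is immediate from \eqref{e:TechAss}. In general I would combine \eqref{e:TechAss} with the integrability of $h$: since $\int_0^\infty\big(\int_{\partial B_r}h\,dS\big)dr=1<\infty$, the quantity $r\int_{\partial B_r}h\,dS$ has $\liminf$ equal to $0$ as $r\to\infty$; and since $f$ is even and quasi-convex, the map $t\mapsto f(tv)$ is nondecreasing on $[0,\infty)$ for each $v\in\R^n$, whence $\langle x,\nabla f(x)\rangle\ge0$, so the truncated integrals $\int_{B_R}\langle x,\nabla f\rangle\,d\gamma_n$ are monotone in $R$ and therefore convergent, which pins the boundary flux down to $0$ along a full sequence. (One may of course assume the right-hand side of \eqref{e:RevShannon} is finite, i.e.\ $\int_{\R^n}|x|^2h\,dx<\infty$, since otherwise there is nothing to prove.) This boundary-term analysis is the only real obstacle; everything else is bookkeeping.
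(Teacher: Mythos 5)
Your proof is correct and follows essentially the same route as the paper: set $f=h/\gamma_n$, apply the $C^1$ form of the functional dilation inequality for $\gamma_n$ with $\kappa=2$, and integrate by parts to convert $\int\langle x,\nabla f\rangle\,d\gamma_n$ into $\int|x|^2 h\,dx-n$. (The paper packages the middle two steps as Proposition~\ref{p:Gauss} and then invokes it, but the computation is identical.) Your treatment of the boundary term is in fact more careful than the paper's, which simply cites \eqref{e:TechAss5/5} for the integration by parts; your observation that for $n\ge 2$ the decay $|x|h(x)\to 0$ alone does not crush the flux $R\int_{\partial B_R}h\,dS$, and that one should combine $\liminf_{R\to\infty}R\int_{\partial B_R}h\,dS=0$ (from $\int h\,dx<\infty$) with the monotonicity of $R\mapsto\int_{B_R}\langle x,\nabla f\rangle\,d\gamma_n$ coming from $\langle x,\nabla f(x)\rangle\ge 0$, is a genuine and worthwhile refinement.
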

The classical Shannon inequality (for instance, see \cite{CT06}) implies the lower bound of the Shannon entropy such that 
$$
 \int_{\R^n} h \log h \,dx
\ge - \frac n2 \log \left( \frac{2\pi e}{n} \int_{\R^n} |x|^2 h(x)\, dx \right)
$$
for any nonnegative function $h$ on $\R^n$ with $\int_{\R^n} h\, dx=1$ and $\int_{\R^n} |x|^2h(x)\, dx<+\infty$. 
On the other hand, \eqref{e:RevShannon} gives the upper bound of the Shannon entropy. 
We remark that as we will see in Subsection \ref{s:Gauss}, we can ensure $\int_{\R^n} |x|^2h(x)\, dx \ge n$ in our settings, and thus it always holds that 
$$
- \frac n2 \log \left( \frac{2\pi e}{n} \int_{\R^n} |x|^2 h(x)\, dx \right)
\le
\frac12 \int_{\R^n} |x|^2h(x)\, dx - \frac n2 \log (2\pi e^2). 
$$
In addition, we can check that when $h=\gamma_n$, then equality in \eqref{e:RevShannon} holds. 

As another application of Theorem \ref{t:FDIIntro}, we can observe the logarithmic Sobolev type or Cram\'{e}r--Rao type inequality in the special case, which will be investigated in Subsection \ref{s:LSI}. 
In general, we say that a probability measure $\mu$ satisfies the logarithmic Sobolev inequality with $\rho>0$ if 
\begin{equation}\label{e:GrossLSI}
\Ent_\mu(f) \le \frac{1}{2\rho} I_\mu(f) 
\end{equation}
for any nonnegative locally Lipschitz function $f$ on $\R^n$, where $I_\mu(f)$ is the Fisher information of $f$ with respect to $\mu$ given by 
$$
I_\mu(f) \coloneqq \int_{\R^n} \frac{|\nabla f|^2}{f} \, d\mu.
$$
It is known that if $d\mu=e^{-\varphi}\, dx$ with $\varphi \in C^\infty(\R^n)$ satisfies $\nabla^2 \varphi \ge \rho$ for some $\rho>0$, then $\mu$ satisfies the logarithmic Sobolev inequality with  $\rho$. 
However, when $\nabla^2 \varphi \ge 0$ (which means that $\varphi$ is convex), $\mu$ may not satisfy \eqref{e:GrossLSI} for any $\rho>0$. 
Indeed, if $\mu$ satisfies the logarithmic Sobolev inequality, $\mu$ should satisfy the normal concentration, or equivalently $\int_{\R^n} e^{\varepsilon |x|^2}\,d\mu(x) < +\infty$ for some $\varepsilon>0$.  
In particular, since $\nabla^2 \varphi \ge 0$ is equivalent to the log-concavity of $\mu$ by \cite{Bor75}, we can observe that a log-concave probability measure may not satisfy \eqref{e:GrossLSI} for any $\rho>0$ in general. 
We refer a reader to \cite{BGL14} for details of the logarithmic Sobolev inequality. 
Nevertheless, we obtain the relation between the relative entropy and the Fisher information from Theorem \ref{t:FDIIntro} by the Cauchy--Schwarz inequality immediately. 

\begin{proposition}\label{p:LSIIntro}
Let $\mu$, $\Omega$ and $f$ be as in Theorem \ref{t:FDIIntro}. 
If $f$ is a locally Lipschitz and quasi-convex function on $\Omega$, 
then it holds 
\begin{equation}\label{e:SpeLSI}
\Ent_\mu(f) \le \frac{2}{\kappa} \left( \int_\Omega |x|^2 f(x)\, d\mu(x) \right)^{1/2} \sqrt{I_\mu(f)}. 
\end{equation}
\end{proposition}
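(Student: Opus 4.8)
The plan is to derive \eqref{e:SpeLSI} as a one-step consequence of inequality \eqref{e:SpeFDI2.5} in Theorem \ref{t:FDIIntro} together with the Cauchy--Schwarz inequality, exactly as indicated in the discussion preceding the statement. Since $\mu$, $\Omega$ and $f$ satisfy the hypotheses of Theorem \ref{t:FDIIntro} and $f$ is in addition locally Lipschitz and quasi-convex on all of $\Omega$, the assumptions of part (2) of that theorem are met, so I may start from
\begin{equation*}
\Ent_\mu(f) \le \frac{2}{\kappa} \int_\Omega |x|\,|\nabla f(x)|\, d\mu(x).
\end{equation*}
It therefore only remains to estimate the right-hand side.

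On the set $\{f>0\}$ I would factor the integrand as $|x|\,|\nabla f| = \big(|x|\sqrt{f}\,\big)\big(|\nabla f|/\sqrt{f}\,\big)$ and apply the Cauchy--Schwarz inequality in $L^2(\mu)$, obtaining
\begin{equation*}
\int_\Omega |x|\,|\nabla f(x)|\, d\mu(x) \le \left( \int_\Omega |x|^2 f(x)\, d\mu(x) \right)^{1/2} \left( \int_{\{f>0\}} \frac{|\nabla f(x)|^2}{f(x)}\, d\mu(x) \right)^{1/2}.
\end{equation*}
On $\{f=0\}$ the integrand $|x|\,|\nabla f|$ vanishes $\mu$-almost everywhere, because the gradient of a locally Lipschitz function is zero a.e.\ on each of its level sets, so the displayed inequality indeed bounds the full integral on the left; and by the usual convention defining the Fisher information, the second factor on the right equals $\sqrt{I_\mu(f)}$. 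Combining the two displays yields \eqref{e:SpeLSI}. Degenerate cases, where $\int_\Omega |x|^2 f\, d\mu$ or $I_\mu(f)$ is $0$ or $+\infty$, are handled trivially, so I would dispose of them first and then assume all quantities finite and positive.

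I do not expect any real obstacle here: all the analytic substance is already contained in Theorem \ref{t:FDIIntro}, and the remaining argument is a bare application of Cauchy--Schwarz. The only points that need a sentence of justification, rather than genuine work, are the vanishing of $\nabla f$ on the zero set of $f$ and the treatment of the degenerate cases.
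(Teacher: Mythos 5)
Your proposal is correct and follows exactly the paper's own route: invoke \eqref{e:SpeFDI2.5} from Theorem~\ref{t:FDIIntro}(2) and then apply the Cauchy--Schwarz inequality with the factorization $|x|\,|\nabla f| = (|x|\sqrt{f})(|\nabla f|/\sqrt{f})$ on $\{f>0\}$. The paper's proof (of Proposition~\ref{p:LSI}, of which this is the statement) says precisely this in one line; your extra remarks on the vanishing of $\nabla f$ a.e.\ on $\{f=0\}$ and on degenerate cases are correct and only make the argument more explicit.
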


We note that \eqref{e:SpeLSI} is also close to the Cram\'{e}r--Rao inequality. 
The classical Cram\'{e}r--Rao inequality (or the Heisenberg--Pauli--Weyl inequality) implies that 
for any nonnegative locally Lipschitz function $h$ with $\int_{\R^n} h\, dx=1$ and $\int_{\R^n} |x|^2h(x)\, dx<+\infty$, it holds 
\begin{equation}\label{e:CR}
n \le \left( \int_{\R^n} |x|^2 h(x)\, dx \right)^\frac12  \sqrt{I_{dx}(h)}.
\end{equation}
Our result \eqref{e:SpeLSI} does not induce \eqref{e:CR} since the relative entropy can take the value 0, and in this sense \eqref{e:SpeLSI} is different from the uncertainty principle. 
However, this difference is natural since on one hand, we cannot take any constant function in \eqref{e:CR}, on the other hand, we can take one in \eqref{e:SpeLSI} due to the finite mass of $\mu$. 
Nevertheless, the behavior of the relative entropy is closely related to the dimension. 
In fact, given probability measures $\mu_1, \mu_2$ and nonnegative functions $f_1, f_2$ on $\R^{n_1}$ and $\R^{n_2}$ with $\int_{\R^{n_1}}f_1(x)\, dx = \int_{\R^{n_2}}f_2(x)\, dx=1$, we can check that 
$$
\Ent_{\mu_1\otimes \mu_2} (f_{12})
=\Ent_{\mu_1}(f_1) + \Ent_{\mu_2}(f_2), 
$$
where $f_{12}(x_1, x_2) \coloneqq f_1(x_1)f_2(x_2)$ for $(x_1, x_2) \in \R^{n_1} \times \R^{n_2}$.
This implies that the relative entropy can be linear increasing in the dimension $n$, and in this sense, the bound \eqref{e:SpeLSI} is similar to \eqref{e:CR}. 

%

As we will see in Subsection \ref{s:KK}, we will also discuss Kahane--Khintchine inequalities with positive and negative exponents for symmetric quasi-convex functions via Theorem \ref{t:FDIIntro} (and Theorem \ref{t:FDI} and Proposition \ref{p:CoareaForm}), and discuss deviation inequalities as their application. 
Similar inequalities for general functions  have been already investigated in \cite{NSV02, B07, Fr09, Ts21} where we need to assume the Remez type inequality.  
On the other hand, we can obtain Kahane--Khintchine inequalities and deviation inequalities without the Remez type inequality. 
We enumerate our results only on deviation inequalities in special cases. 

\begin{corollary}\label{c:LDIntro}
Let $\mu$ and $\Omega$ be as in Theorem \ref{t:FDIIntro}. 
\begin{itemize}
\item[(1)] Let $f$ be a positive $C^1$ symmetric quasi-convex function on $\Omega$ satisfying 
$$
f \in \bigcap_{p \ge 1} L^p(\mu). 
$$ 
We set 
$$
\alpha \coloneqq \frac{\kappa}{ 2\| \langle \cdot, \nabla \log f( \cdot) \rangle \|_{L^\infty}}.
$$ 
If $1\le \alpha < +\infty$, then it holds that 
\begin{equation}\label{e:ConcIntro}
\mu( \{ x \in \Omega \mid f(x) \ge Ct \alpha^{-1/\alpha}\| f \|_{L^\alpha(\mu)} \}) \le 2 \exp (-t^\alpha), \;\;\; \forall t \ge 1, 
\end{equation}
where $C>0$ is an absolute constant. 
\item[(2)]
Suppose that $\Omega$ is bounded, and let $f$ be a positive $C^1$ symmetric quasi-convex function on some neighborhood of $\overline{\Omega}$ and set 
$$
\beta \coloneqq \frac{2}{\kappa \log 2} \| \langle \cdot, \nabla \log f( \cdot) \rangle \|_{L^\infty}.  
$$
Suppose that $0 < \beta < +\infty$ with $f^{-1/\beta} \in L^1(\mu)$. 
Then for any small enough $\varepsilon>0$, it holds that 
$$
\mu(\{ x \in \Omega \mid f(x) \le t\, {\rm med}(f) \}) \le \left(\frac{e}{\varepsilon \beta} \right)^{1-\varepsilon \beta} t^{\frac1\beta - \varepsilon}, \;\;\; \forall t \in (0, 1], 
$$
where ${\rm med}(f) \in \R$ is the L\'{e}vy mean of $f$, which means that 
$$
\mu(\{ x \in \Omega \mid f(x) \ge {\rm med}(f) \}) \ge \frac12, \;\;\;
\mu(\{ x \in \Omega \mid f(x) \le {\rm med}(f) \}) \ge \frac12.
$$
\end{itemize}
\end{corollary}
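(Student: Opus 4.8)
The plan is to reduce both deviation estimates to Markov's inequality applied to a suitable power of $f$: for (1) I would feed the functional dilation inequality \eqref{e:SpeFDI3} into a first–order ODE for the $L^{p}$–norms of $f$, and for (2) I would apply the dilation inequality \eqref{e:DilKappa} directly to the sublevel sets of $f$. Two elementary remarks will be used throughout. First, since $f$ is symmetric and quasi-convex, every sublevel set $\{f<\lambda\}$ is a symmetric convex set and hence contains the origin; so $0$ minimizes $f$, the map $t\mapsto f(tx)$ is non-decreasing on $[0,1]$ for each $x$, and therefore $\langle x,\nabla f(x)\rangle\ge0$ wherever $f$ is differentiable. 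Second, writing $L\coloneqq\|\langle\cdot,\nabla\log f(\cdot)\rangle\|_{L^{\infty}}$ and integrating $\frac{d}{dt}\log f(tx)=\frac1t\langle tx,\nabla\log f(tx)\rangle\le\frac Lt$ along a radial segment contained in the domain yields the ``$L$–subhomogeneity'' bounds $f(ax)\le a^{L}f(x)$ for $a\ge1$ and $f(ax)\ge a^{L}f(x)$ for $0<a\le1$; for all the domains occurring below these segments stay inside, by symmetry and convexity.

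For part (1), I would apply \eqref{e:SpeFDI3} to $f^{p}$ for $p\ge1$, which is again continuous, symmetric, $C^{1}$ and quasi-convex on $\Omega$ and lies in $L^{1}(\mu)$, with $\int_{\Omega}\langle x,\nabla f^{p}\rangle\,d\mu=p\int_{\Omega}f^{p}\langle x,\nabla\log f\rangle\,d\mu\le pL\int_{\Omega}f^{p}\,d\mu<\infty$. This gives $\Ent_{\mu}(f^{p})\le\frac{2pL}{\kappa}\int_{\Omega}f^{p}\,d\mu=\frac p\alpha\int_{\Omega}f^{p}\,d\mu$. Setting $F(p)=\int_{\Omega}f^{p}\,d\mu$, which is positive and $C^{1}$ on $[1,\infty)$ with $F'(p)=\int_{\Omega}f^{p}\log f\,d\mu$, the left-hand side is $pF'(p)-F(p)\log F(p)$, so dividing by $pF(p)$ and putting $G(p)=\frac1p\log F(p)=\log\|f\|_{L^{p}(\mu)}$ turns the inequality into $G'(p)\le\frac1{\alpha p}$. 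Integrating from $\alpha$ to $p\ge\alpha$ (here $\alpha\ge1$ is used) gives the Kahane--Khintchine–type growth $\|f\|_{L^{p}(\mu)}\le(p/\alpha)^{1/\alpha}\|f\|_{L^{\alpha}(\mu)}$ for $p\ge\alpha$, and then Markov's inequality with the choices $p=\alpha t^{\alpha}$ and $s=e\,t\,\alpha^{-1/\alpha}\|f\|_{L^{\alpha}(\mu)}$ collapses, after cancellation, to $\mu(\{f\ge s\})\le(\alpha e^{-\alpha})^{t^{\alpha}}\le e^{-t^{\alpha}}$ for $t\ge1$, the last step being $\log\alpha\le\alpha-1$; this is \eqref{e:ConcIntro} with absolute constant $C=e$.

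For part (2), write $m={\rm med}(f)$ and, for $\lambda>0$, $K_{\lambda}=\{x\in\Omega:f(x)<\lambda\}$, $P(\lambda)=\mu(K_{\lambda})$; each nonempty $K_{\lambda}$ lies in $\K_{s}^{n}(\Omega)$, so \eqref{e:DilKappa} applies to it. Using $(K_{\lambda})_{\varepsilon}=\frac{1+\varepsilon}{1-\varepsilon}K_{\lambda}$ together with the inward half of the subhomogeneity bound, one checks $(K_{\lambda})_{\varepsilon}\cap\Omega\subseteq\{f<(\frac{1+\varepsilon}{1-\varepsilon})^{L}\lambda\}$, hence $\mu^{*}(K_{\lambda})\le 2L\lambda\,\underline{D}^{+}P(\lambda)$ (a Dini derivative). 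Feeding this into \eqref{e:DilKappa} and bounding $-(1-q)\log(1-q)\ge(\log2)\,q$ on $[0,\tfrac12]$ gives, for every $\lambda\le m$ (where $P(\lambda)\le\mu(\{f<m\})\le\tfrac12$), the differential inequality $(\log P)'(\lambda)\ge\frac{\kappa\log2}{2L\lambda}=\frac1{\beta\lambda}$ a.e.; since $\lambda\mapsto\log P(\lambda)$ is non-decreasing, integrating from $\lambda$ to $m$ yields the small-ball estimate $\mu(\{f<\lambda\})\le\tfrac12(\lambda/m)^{1/\beta}$ for $0<\lambda\le m$. A layer-cake computation then gives $\int_{\Omega}f^{-q}\,d\mu\le m^{-q}\,\frac{2-q\beta}{2(1-q\beta)}$ for $0<q<1/\beta$, in agreement with the standing hypothesis $f^{-1/\beta}\in L^{1}(\mu)$, and Markov's inequality with $q=\tfrac1\beta-\varepsilon$ produces $\mu(\{f\le tm\})\le\frac{1+\varepsilon\beta}{2\varepsilon\beta}\,t^{1/\beta-\varepsilon}$ for $t\in(0,1]$; the elementary bound $\frac{1+\varepsilon\beta}{2\varepsilon\beta}\le(\frac e{\varepsilon\beta})^{1-\varepsilon\beta}$, valid for $0<\varepsilon<1/\beta$, gives the stated inequality.

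The ODE integrations and the two Markov optimisations are routine, so I expect the only real work to be in part (2): turning the purely geometric identity $(K_{\lambda})_{\varepsilon}=\frac{1+\varepsilon}{1-\varepsilon}K_{\lambda}$ into an analytic statement about the distribution function $P(\lambda)=\mu(\{f<\lambda\})$. This needs, first, the radial subhomogeneity estimate to control how one sublevel set sits inside a slightly larger one — here it is essential that $\Omega$ is symmetric and convex, so the relevant radial segments stay in $\Omega$ and $f$ is defined and radially controlled along them — and second, a careful treatment of the low regularity of $P$, which is only monotone, forcing one to argue with Dini derivatives and to use that a monotone function dominates the integral of its a.e.\ derivative. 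A further delicate point is the borderline integrability of $f^{-q}$ as $q\uparrow1/\beta$, which is precisely why the hypothesis $f^{-1/\beta}\in L^{1}(\mu)$ is the natural one and where it would enter if the self-contained small-ball estimate above were instead deduced from a Kahane--Khintchine inequality with negative exponents.
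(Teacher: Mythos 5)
Your proof of part (1) is essentially the route taken in the paper: apply \eqref{e:SpeFDI3} to the power $f^{p}$, rewrite $\Ent_{\mu}(f^{p})$ as $p^{2}F(p)G'(p)$ where $G(p)=\log\|f\|_{L^{p}(\mu)}$, integrate the resulting ODE $G'(p)\le(\alpha p)^{-1}$ from $\alpha$ to $p$, and optimise Markov over $p$. The paper does exactly this (Proposition~\ref{p:Moment}, then Corollary~\ref{c:Conc} via the Orlicz norm of Lemma~\ref{l:Orlicz}); your direct Markov optimisation with $p=\alpha t^{\alpha}$ just unwinds Lemma~\ref{l:Orlicz} and yields $C=e$ with the prefactor $1$ rather than $2$, which is fine.

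Part (2) is where you diverge from the paper in an interesting way. The paper's route goes through the negative-exponent co-area formula \eqref{e:NegCoareaForm}, from which it derives the negative-exponent moment comparison (Proposition~\ref{p:NegMom}: ${\rm med}(f)\le(e/(1-\beta p))^{\beta}\|f\|_{L^{-p}(\mu)}$) and then Markov (Corollary~\ref{c:NegConc}). You instead argue entirely at the level of the distribution function $P(\lambda)=\mu(\{f<\lambda\})$: the geometric identity $(K_{\lambda})_{\varepsilon}=\tfrac{1+\varepsilon}{1-\varepsilon}K_{\lambda}$ combined with the radial $L$-subhomogeneity $f(ax)\le a^{L}f(x)$ gives a pointwise bound $\mu^{*}(K_{\lambda})\le 2L\lambda\,\underline{D}^{+}P(\lambda)$, and feeding this into \eqref{e:DilKappa} together with $-(1-q)\log(1-q)\ge(\log 2)q$ on $[0,\tfrac12]$ yields $\underline{D}^{+}(\log P)\ge(\beta\lambda)^{-1}$ on $\{P\le\tfrac12\}$. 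Integrating (legitimate for monotone functions, as you note) gives the clean small-ball estimate $\mu(\{f<\lambda\})\le\tfrac12(\lambda/m)^{1/\beta}$ for $\lambda\le m$, which is in fact \emph{stronger} than the stated bound --- it already implies $\mu(\{f\le tm\})\le\tfrac12 t^{1/\beta}\le(e/(\varepsilon\beta))^{1-\varepsilon\beta}t^{1/\beta-\varepsilon}$ directly, so the subsequent layer-cake and Markov detour is logically superfluous (it just recovers the paper's constant). The trade-off is that your approach requires pointwise control ($\sup_{\Omega}$, not just $\mathrm{ess\,sup}$) of $\langle x,\nabla\log f(x)\rangle$ to run the radial integration --- harmless here since $f$ is $C^{1}$ and $\mu$ has full support on $\Omega$ --- and a small amount of Dini-derivative bookkeeping, whereas the paper's co-area route integrates the dilation inequality across all levels at once and thereby sidesteps pointwise regularity issues for $P$. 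It is worth noting that the paper's integrated approach also supplies the negative Kahane--Khintchine inequality of Proposition~\ref{p:NegMom}, which your pointwise derivation does not produce as a byproduct (though it could be recovered from your small-ball estimate by the very layer-cake computation you perform).
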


As the final application, we can also obtain the following result on isoperimetry. 

\begin{corollary}\label{c:IsoIntro}
Let $\mu=e^{-\varphi(x)}\, dx$ be a probability measure supported on a symmetric convex domain $\Omega \subset \R^n$. 
Suppose that $\varphi$ is smooth on some neighborhood of $\overline{\Omega}$ and $\mu$ satisfies the dilation inequality for $\K_s^n(\Omega)$ with $\kappa>0$. 
Then for any bounded $K \in \K_s^n(\Omega)$ with smooth boundary and $p \in (1,2]$, we have 
\begin{equation}\label{e:IsoIntro}
\mu^+(K)
\ge 
\left( \frac{r(K) }{\int_{\partial K} \langle x, \eta(x) \rangle |x|^{p'} e^{-\varphi(x)}\, d\sigma_K(x) } \right)^{p-1}
 \left[ -\frac\kappa2 (1-\mu(K)) \log (1-\mu(K)) \right]^p, 
\end{equation}
where $p'$ is the conjugate of $p$, $r(K)$ is the maximal constant $c>0$ such that $c {\rm B}_2^n \subset K$, $\eta$ is the outer unit normal vector along $\partial K$ and $\sigma_K$ is the surface measure on $\partial K$. 
\end{corollary}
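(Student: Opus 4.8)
The plan is to rewrite both areas $\mu^*(K)$ and $\mu^+(K)$ as integrals over $\partial K$, substitute the dilation hypothesis \eqref{e:DilKappa} into the first, and then pass from the first to the second by Hölder's inequality together with the two elementary bounds $\langle x,\eta(x)\rangle\le|x|$ and $|x|\ge r(K)$, which hold on $\partial K$.

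First I would identify the two areas for a bounded $K\in\K_s^n(\Omega)$ with smooth boundary. Since $\overline K$ lies in a neighbourhood of $\overline\Omega$ on which $\varphi$ is smooth, the map $\lambda\mapsto\mu(\lambda K)=\int_{\lambda K}e^{-\varphi}\,dx$ is smooth near $\lambda=1$, and differentiating under the integral sign and applying the divergence theorem to the field $x\mapsto x\,e^{-\varphi(x)}$ gives $\frac{d}{d\lambda}\big|_{\lambda=1}\mu(\lambda K)=\int_{\partial K}\langle x,\eta(x)\rangle e^{-\varphi(x)}\,d\sigma_K(x)$. Because $K_\varepsilon=\tfrac{1+\varepsilon}{1-\varepsilon}K$ with $\tfrac{d}{d\varepsilon}\big|_{\varepsilon=0}\tfrac{1+\varepsilon}{1-\varepsilon}=2$, the $\liminf$ in \eqref{e:DilArea} is an honest limit and
\[
\mu^*(K)=2\int_{\partial K}\langle x,\eta(x)\rangle e^{-\varphi(x)}\,d\sigma_K(x),
\]
while the tube/coarea formula for the bounded smooth convex set $K$ (cf. Proposition~\ref{p:CoareaForm}) shows that the $\liminf$ in \eqref{e:Area} is a limit with $\mu^+(K)=\int_{\partial K}e^{-\varphi}\,d\sigma_K$. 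Substituting the formula for $\mu^*(K)$ into \eqref{e:DilKappa} yields
\begin{equation}\label{e:iso-lower}
\int_{\partial K}\langle x,\eta(x)\rangle e^{-\varphi(x)}\,d\sigma_K(x)\ \ge\ -\frac\kappa2(1-\mu(K))\log(1-\mu(K))\ \ge\ 0 .
\end{equation}

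Next I would interpolate between $\mu^+(K)$ and the left-hand side of \eqref{e:iso-lower}. Writing $e^{-\varphi}=(e^{-\varphi})^{1/p}(e^{-\varphi})^{1/p'}$ and applying Hölder's inequality on $(\partial K,\sigma_K)$ with exponents $p$ and $p'$,
\[
\int_{\partial K}\langle x,\eta\rangle e^{-\varphi}\,d\sigma_K\ \le\ \Big(\int_{\partial K}e^{-\varphi}\,d\sigma_K\Big)^{1/p}\Big(\int_{\partial K}\langle x,\eta\rangle^{p'}e^{-\varphi}\,d\sigma_K\Big)^{1/p'}.
\]
On $\partial K$ one has $0<r(K)\le\langle x,\eta(x)\rangle\le|x|$, the lower bound because $r(K){\rm B}_2^n\subset K$ (so the supporting hyperplane of $K$ at $x$ has distance $\ge r(K)$ from the origin) and the upper bound by Cauchy--Schwarz; in particular $|x|\ge r(K)$. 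Since $p'=p/(p-1)\ge2$ for $p\in(1,2]$, monotonicity gives the pointwise inequality $\langle x,\eta(x)\rangle^{p'}=\langle x,\eta(x)\rangle\,\langle x,\eta(x)\rangle^{p'-1}\le\langle x,\eta(x)\rangle\,|x|^{p'-1}\le r(K)^{-1}\langle x,\eta(x)\rangle\,|x|^{p'}$, hence $\int_{\partial K}\langle x,\eta\rangle^{p'}e^{-\varphi}\,d\sigma_K\le r(K)^{-1}\int_{\partial K}\langle x,\eta\rangle|x|^{p'}e^{-\varphi}\,d\sigma_K$.

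Finally I would raise the Hölder estimate to the power $p$ and combine it with $p/p'=p-1$ and the last bound to obtain
\[
\Big(\int_{\partial K}\langle x,\eta\rangle e^{-\varphi}\,d\sigma_K\Big)^p\ \le\ \mu^+(K)\Big(\frac1{r(K)}\int_{\partial K}\langle x,\eta\rangle|x|^{p'}e^{-\varphi}\,d\sigma_K\Big)^{p-1},
\]
then solve for $\mu^+(K)$ and insert \eqref{e:iso-lower} (both of its sides are nonnegative, so raising to the $p$-th power is order-preserving); this produces exactly \eqref{e:IsoIntro}, the case $\mu(K)=1$ being trivial since then the right-hand side vanishes. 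I expect the only delicate point to be the first step — verifying that the two $\liminf$'s in \eqref{e:DilArea} and \eqref{e:Area} are genuine limits for a smooth bounded $K$ and evaluating them as the displayed boundary integrals (with the factor $2$ coming from $\frac{d}{d\varepsilon}\big|_{\varepsilon=0}\frac{1+\varepsilon}{1-\varepsilon}=2$); everything afterwards is Hölder's inequality plus the two elementary bounds $\langle x,\eta(x)\rangle\le|x|$ and $|x|\ge r(K)$ on $\partial K$.
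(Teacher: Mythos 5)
Your proof is correct, and it takes a genuinely different route from the paper's. The paper derives \eqref{e:IsoIntro} through the functional form: it applies Theorem~\ref{t:FDIIntro}\,(2) to the Lipschitz approximations $f_\varepsilon(x)=\min\{1,\varepsilon^{-1}d(x,K)\}$ of $\mathbf{1}_{\R^n\setminus\overline K}$, bounds $\int_\Omega|x||\nabla f_\varepsilon|\,d\mu$ via H\"older's inequality on the tubular annulus $[K]_\varepsilon\setminus K$ using the inclusion $[K]_\varepsilon\subset(1+\varepsilon/r(K))K$, evaluates the $\varepsilon\downarrow 0$ limit of the $|x|^{p'}$-term by the divergence theorem to produce $r(K)^{-1}\int_{\partial K}\langle x,\eta\rangle|x|^{p'}e^{-\varphi}\,d\sigma_K$, and finally uses lower semi-continuity of $\Ent_\mu$ to recover the entropy of the indicator. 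You instead evaluate both areas $\mu^*(K)=2\int_{\partial K}\langle x,\eta\rangle e^{-\varphi}\,d\sigma_K$ and $\mu^+(K)=\int_{\partial K}e^{-\varphi}\,d\sigma_K$ as boundary integrals at the outset, plug the first into \eqref{e:DilKappa}, and run H\"older directly on $(\partial K,\sigma_K)$ with the two pointwise bounds $\langle x,\eta\rangle\le|x|$ and $|x|\ge r(K)$. This bypasses the entropy functional and the $\varepsilon$-regularization entirely and is noticeably shorter; in exchange it needs the explicit first-variation formulas for $\mu^*$ and $\mu^+$ (requiring smoothness of $\partial K$ and of $\varphi$ near $\overline K$, both granted here), whereas the paper keeps $\varepsilon>0$ to the end and only evaluates one limit. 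Both arguments implicitly treat $\overline K$ as lying where the divergence theorem applies cleanly, so you are on equal footing with the paper there; and your reduction of $\langle x,\eta\rangle^{p'}$ to $r(K)^{-1}\langle x,\eta\rangle|x|^{p'}$ only uses $p'-1\ge 1$ (guaranteed by $p\le 2$) and $|x|\ge r(K)$ on $\partial K$, which are both valid. The final step of raising the nonnegative H\"older estimate to the $p$-th power and substituting the lower bound from \eqref{e:DilKappa} is sound.
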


Corollary \ref{c:IsoIntro} reminds us of Cheeger's isoperimetric inequality for log-concave probability measures by Kannan--Lov\'{a}sz--Simonovits \cite{KLS95} and Bobkov \cite{B99, Bo07} where the first or second moment appears as isoperimetric constants. 
Kannan--Lov\'{a}sz--Simonovits also conjecture that the isoperimetric constant of every log-concave probability measure is controlled by the covariance matrix, which is called the KLS conjecture. 
We refer a reader to \cite{BGVV14} for its history and related works and to \cite{Kl23} for the recent development. 

We remark that the dilation inequality \eqref{e:DilKappa} can give an estimate of the $\mu$-perimeter directly. 
Indeed, if $R(K)>0$ is the minimal constant $C>0$ such that $K \subset C {\rm B}_2^n$ for $K \in \K_s^n(\R^n)$, then it follows from \eqref{e:SpeDilK} that 
$$
K_\varepsilon 
\subset K + \frac{2\varepsilon}{1-\varepsilon} R(K) {\rm B}_2^n = [K]_{\frac{2\varepsilon}{1-\varepsilon} R(K)},  
$$
which implies that 
$$
\mu^*(K) \le 2R(K) \mu^+(K). 
$$
Combining this inequality with \eqref{e:DilKappa}, we conclude
\begin{equation}\label{e:IsoDirect}
\mu^+(K) \ge -\frac{\kappa}{2R(K)} (1-\mu(K) ) \log (1-\mu(K)).
\end{equation}
We can find a similar estimate in \cite{B99} for log-concave probability measures. 
However, \eqref{e:IsoIntro} seems different from  \eqref{e:IsoDirect} since \eqref{e:IsoIntro} requires not only the geometric structure of $K$, but also the  distribution. 
In particular, we can recover \eqref{e:IsoDirect} from \eqref{e:IsoIntro}. 
In fact, by the definition of $R(K)$, we have $|x| \le R(K)$ for $x \in \partial K$, which implies that  
$$
\int_{\partial K} \langle x, \eta(x) \rangle |x|^{p'} e^{-\varphi(x)}\, d \sigma_K(x)
\le
R(K)^{p'+1} \int_{\partial K} e^{-\varphi}\, d \sigma_K. 
$$
Hence \eqref{e:IsoIntro} yields that 
$$
\mu^+(K)
\ge 
\frac{1}{R(K)^{2p-1}} \left( \frac{r(K) }{\int_{\partial K} e^{-\varphi}\, d \sigma_K} \right)^{p-1}
 \left[ -\frac\kappa2 (1-\mu(K)) \log (1-\mu(K)) \right]^p, 
$$ 
and thus letting $p \downarrow 1$, we obtain \eqref{e:IsoDirect}.

In Section \ref{s:Re}, we will show the equivalence between \eqref{e:DilKappa} and Theorem \ref{t:FDI} which generalizes Theorem \ref{t:FDIIntro}, and as its corollaries, we will give new classes satisfying the dilation inequality \eqref{e:DilKappa}. 
More precisely, we will discuss the stability under bounded perturbations and tensor products.

\begin{corollary}\label{c:BDDIntro}
Let $\mu$ be a probability measure supported on a symmetric convex domain $\Omega \subset \R^n$ with $\int_\Omega |x| \, d\mu(x) < + \infty$ and let $h$ be a positive Borel function on $\Omega$ such that $b^{-1} \le h \le b$ for some $b>1$ and $\int_\Omega h \, d\mu=1$. 
Let $\nu$ be a probability measure on $\Omega$ given by $d\nu = h\, d\mu$. 
If $\mu$ satisfies the dilation inequality for $\K_s^n(\Omega)$ with $\kappa>0$, then $\nu$ satisfies the dilation inequality for $\K_s^n(\Omega)$ with the constant $b^{-2}\kappa$. 
\end{corollary}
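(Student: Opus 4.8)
The plan is to pass through the functional form of the dilation inequality rather than its geometric form. The geometric quantity $-(1-\mu(K))\log(1-\mu(K))$ does not interact well with a change of density --- the function $t\mapsto -t\log t$ is not monotone on $(0,1)$, so the two-sided bound $b^{-1}\le h\le b$ does not produce a clean comparison of this quantity for $\mu$ and $\nu$ --- whereas the relative entropy behaves under bounded perturbations exactly as in the Holley--Stroock lemma for the logarithmic Sobolev inequality. Hence, using the equivalence proved in Section~\ref{s:Re} (Theorem~\ref{t:FDI} in one direction, Theorem~\ref{t:ReConstDI} in the other), it suffices to show that $\nu$ satisfies the functional dilation inequality of Theorem~\ref{t:FDI} with constant $b^{-2}\kappa$; the dilation inequality for $\K_s^n(\Omega)$ with that constant then follows from Theorem~\ref{t:ReConstDI}, the hypothesis $\int_\Omega|x|\,d\mu<+\infty$ together with $h\le b$ supplying the integrability needed to apply it to $\nu$.

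First I would record two elementary facts stemming from $b^{-1}\le h\le b$ and $\int_\Omega h\,d\mu=1$. (i) For every nonnegative Borel function $g$ on $\Omega$ one has $\int_\Omega g\,d\mu=\int_\Omega g\,h^{-1}\,d\nu\le b\int_\Omega g\,d\nu$ and symmetrically $\int_\Omega g\,d\nu\le b\int_\Omega g\,d\mu$; consequently $L^1(\mu)=L^1(\nu)$ and every integrability condition occurring in Theorem~\ref{t:FDI} is stable under interchanging $\mu$ and $\nu$, so that a function admissible for one of the two measures is admissible for the other. (ii) From the variational identity
$$
\Ent_\mu(f)=\inf_{t>0}\int_\Omega\Big(f\log\frac{f}{t}-f+t\Big)\,d\mu,
$$
whose integrand is pointwise nonnegative when $f\ge0$ and $t>0$, together with $h\le b$, one obtains the Holley--Stroock estimate $\Ent_\nu(f)\le b\,\Ent_\mu(f)$ for every nonnegative $f\in L^1(\mu)$.

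The one genuinely structural input is that, for each admissible $f$, the right-hand side of Theorem~\ref{t:FDI} is the integral against $\mu$ of a \emph{nonnegative} function. In the convex case this integrand is $\inf_{y\in\partial f(x)}\langle x,y\rangle$, which is nonnegative because symmetry and convexity give $f(0)=f\big(\tfrac12 x+\tfrac12(-x)\big)\le\tfrac12 f(x)+\tfrac12 f(-x)=f(x)$, whence $\langle x,y\rangle\ge f(x)-f(0)\ge0$ for every $y\in\partial f(x)$; in the quasi-convex case the integrand is $|x|\,|\nabla f(x)|\ge0$ on $\{f>f(0)\}$. By (i), the integral of this nonnegative quantity against $\nu$ is therefore at least $b^{-1}$ times its integral against $\mu$. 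Since $\mu$ satisfies \eqref{e:DilKappa} with $\kappa$, Theorem~\ref{t:FDI} yields the functional dilation inequality for $\mu$ with constant $\kappa$; combining this with (ii) and the previous sentence gives, for every admissible $f$,
$$
\Ent_\nu(f)\le b\,\Ent_\mu(f)\le\frac{2b}{\kappa}\int_\Omega(\,\cdot\,)\,d\mu\le\frac{2b^2}{\kappa}\int_\Omega(\,\cdot\,)\,d\nu=\frac{2}{b^{-2}\kappa}\int_\Omega(\,\cdot\,)\,d\nu,
$$
where $(\,\cdot\,)$ denotes the relevant nonnegative integrand. This is exactly the functional dilation inequality for $\nu$ with constant $b^{-2}\kappa$, and Theorem~\ref{t:ReConstDI} completes the proof.

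The main point to handle with care is that Theorem~\ref{t:FDI} is more general than the convex case recorded in Theorem~\ref{t:FDIIntro}, so one must verify from its precise statement that in full generality its right-hand side is still the integral of a nonnegative function against $\mu$ over a possibly restricted region --- this is ensured by the symmetry hypothesis exactly as in the two cases above --- and that its class of admissible functions is preserved when passing between $\mu$ and $\nu$. Apart from this bookkeeping I do not expect any real difficulty: once the statement is rephrased in the functional language of Theorem~\ref{t:FDI}, the argument is a transcription of the Holley--Stroock perturbation, and the loss of the factor $b^{2}$ is inherent to that method.
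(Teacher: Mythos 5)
Your proof is correct and follows essentially the same route as the paper: bound $\Ent_\nu(f)\le b\,\Ent_\mu(f)$ via the variational (Holley--Stroock / Lemma~\ref{l:BGLp240}) formula for relative entropy, apply Theorem~\ref{t:FDI} for $\mu$, convert $\int_\Omega\Phi_f\,d\mu\le b\int_\Omega\Phi_f\,d\nu$ using $h\ge b^{-1}$ and the nonnegativity of $\Phi_f$, and then pass back through Theorem~\ref{t:ReConstDI}. The paper simply phrases all of this in terms of $\Phi_f$ and records that $\mathrm{QC}^1(\Omega,\mu)=\mathrm{QC}^1(\Omega,\nu)$, which is your observation (i).
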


\begin{corollary}\label{c:TensorProdIntro}
Let $\mu_1, \mu_2$ be probability measures supported on symmetric convex domains $\Omega_1 \subset \R$ and $\Omega_2 \subset \R^n$, respectively, with $\int_{\Omega_1} |x|\, d\mu_1, \int_{\Omega_2} |x|\, d\mu_2<+\infty$. 
We suppose that $\mu_1, \mu_2$ satisfy the dilation inequality for $\K_s^1(\Omega_1), \K_s^n(\Omega_2)$ with some $\kappa_1, \kappa_2>0$, respectively. 
Let $K \subset \R \times \R^n$ be an open convex set such that if $(x, y) \in K \subset \R \times \R^n$, then it holds that $(-x, y), (x, -y), (-x, -y) \in K$. 
Then $\mu_1 \otimes \mu_2$ satisfies \eqref{e:DilKappa} for $K$ with the constant $\kappa=(\kappa_1^{-1}+ \kappa_2^{-1})^{-1}$. 
\end{corollary}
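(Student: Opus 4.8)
The plan is to transfer the dilation inequality to its functional form, tensorize there by subadditivity of relative entropy, and then transfer back. Observe first that any $K$ with the stated invariance satisfies $K=-K$, so it is a symmetric open convex set in $\R\times\R^n$ and $K_\varepsilon=\frac{1+\varepsilon}{1-\varepsilon}K$; moreover $\Omega_1\times\Omega_2$ is a symmetric convex domain carrying $\mu_1\otimes\mu_2$, whose first moment is finite since $\int|(x,y)|\,d(\mu_1\otimes\mu_2)\le\int_{\Omega_1}|x|\,d\mu_1+\int_{\Omega_2}|y|\,d\mu_2<+\infty$. Hence it suffices to establish \eqref{e:DilKappa} for such $K$ with $\kappa=(\kappa_1^{-1}+\kappa_2^{-1})^{-1}$.

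By hypothesis each $\mu_i$ satisfies \eqref{e:DilKappa} with $\kappa_i$, so by Theorem \ref{t:FDIIntro}(2) (inequality \eqref{e:SpeFDI2.5}, or its generalization Theorem \ref{t:FDI}) every continuous symmetric locally Lipschitz quasi-convex $g\in L^1(\mu_i)$ on $\Omega_i$ obeys $\Ent_{\mu_i}(g)\le\frac{2}{\kappa_i}\int_{\Omega_i}|x|\,|\nabla g|\,d\mu_i$. Now let $f$ be a continuous block-unconditional locally Lipschitz quasi-convex function on $\Omega_1\times\Omega_2$ with $f\in L^1(\mu_1\otimes\mu_2)$. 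For almost every $x_2$ the slice $x_1\mapsto f(x_1,x_2)$ is continuous, symmetric, locally Lipschitz, quasi-convex on $\Omega_1$ and (by Fubini) in $L^1(\mu_1)$, and symmetrically for the slices $f(x_1,\cdot)$; applying the factor inequalities to the slices and integrating gives
\begin{align*}
\int_{\Omega_2}\Ent_{\mu_1}(f(\cdot,x_2))\,d\mu_2(x_2)&\le\frac{2}{\kappa_1}\int|x_1|\,|\nabla_{x_1}f|\,d(\mu_1\otimes\mu_2),\\
\int_{\Omega_1}\Ent_{\mu_2}(f(x_1,\cdot))\,d\mu_1(x_1)&\le\frac{2}{\kappa_2}\int|x_2|\,|\nabla_{x_2}f|\,d(\mu_1\otimes\mu_2).
\end{align*}
By subadditivity of relative entropy for product measures, $\Ent_{\mu_1\otimes\mu_2}(f)$ is at most the sum of the two left-hand sides. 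Finally, with $a=|x_1|$, $b=|x_2|$, $u=|\nabla_{x_1}f|$, $v=|\nabla_{x_2}f|$ and $c_i=\kappa_i^{-1}$, one has $c_1au+c_2bv\le(c_1+c_2)(au+bv)\le(c_1+c_2)\sqrt{a^2+b^2}\sqrt{u^2+v^2}$ by Cauchy--Schwarz, while $\sqrt{a^2+b^2}=|(x_1,x_2)|$ and $\sqrt{u^2+v^2}=|\nabla f|$ almost everywhere; integrating this pointwise bound yields $\Ent_{\mu_1\otimes\mu_2}(f)\le\frac{2}{\kappa}\int|x|\,|\nabla f|\,d(\mu_1\otimes\mu_2)$ with $\kappa=(\kappa_1^{-1}+\kappa_2^{-1})^{-1}$.

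To conclude, I would invoke the reconstruction of the dilation inequality from its functional form (Theorem \ref{t:ReConstDI}). For a fixed block-unconditional $K$, the test functions used there can be taken to depend on $x$ only through the Minkowski gauge $\|x\|_K$, and such functions are themselves block-unconditional; hence only the block-unconditional instances of the functional inequality --- exactly those established above for $\mu_1\otimes\mu_2$ --- are required, and we obtain \eqref{e:DilKappa} for $\mu_1\otimes\mu_2$ and every such $K$ with constant $\kappa$.

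The main obstacle I anticipate is not conceptual but a matter of matching hypotheses and justifying the measure-theoretic steps: one must check that the slices of $f$ lie in precisely the function class required by Theorem \ref{t:FDI} (and, if one uses the sharper form \eqref{e:SpeFDI2} involving the set $\{f>f(0)\}$, note that for block-unconditional quasi-convex $f$ one has $f(0,0)=\min f$ and $f(0,x_2)=\min_{x_1}f(x_1,x_2)$, since a nonempty block-unconditional convex sublevel set contains the origin), justify Fubini/Tonelli in the subadditivity estimate, and verify the integrability conditions needed to run the reconstruction. A secondary point is to observe that the stated invariance $(x,y)\in K\Rightarrow(-x,y),(x,-y),(-x,-y)\in K$ is exactly what makes $K$ symmetric in $\R\times\R^n$ and its gauge separately even in $x$ and in $y$, which is what allows the one-block-at-a-time slicing argument to run in both coordinates.
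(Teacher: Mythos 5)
There is a genuine gap, and it is concentrated in the last step. Your subadditivity bound
$\Ent_{\mu_1\otimes\mu_2}(f)\le\int\Ent_{\mu_1}(f(\cdot,x_2))\,d\mu_2+\int\Ent_{\mu_2}(f(x_1,\cdot))\,d\mu_1$
is a correct (and slightly different) entry point than the chain--rule equality the paper uses, and your observation that the test functions in Theorem~\ref{t:ReConstDI} can be taken block-unconditional is right. But the functional inequality you actually produce for the product measure,
$\Ent_{\mu_1\otimes\mu_2}(f)\le\tfrac{2}{\kappa}\int|x|\,|\nabla f|\,d(\mu_1\otimes\mu_2)$,
is \emph{strictly weaker} than the hypothesis \eqref{e:FDI} of Theorem~\ref{t:ReConstDI}, namely $\Ent_\mu(f)\le\kappa^{-1}\int\Phi_f\,d\mu$, since $\Phi_f\le 2|x|\,|\nabla f|$. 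That loss is not cosmetic: for the reconstruction test function $f_\sigma$ (supported on the thin shell $K_\sigma\setminus K$), one has $\Phi_{f_\sigma}=\tfrac{2}{\delta}\|x\|_K$ there, whose integral is controlled by $\mu(K_\sigma)-\mu(K)$ and hence, after $\sigma\downarrow 0$, by $\mu^*(K)$. In contrast $2|x|\,|\nabla f_\sigma|=\tfrac{2}{\delta}|x|\,|\nabla\|x\|_K|$, and the ratio $|x|\,|\nabla\|x\|_K|/\|x\|_K$ is $\ge 1$ with equality only when $\nabla\|x\|_K$ is parallel to $x$; for an eccentric $K$ (e.g.\ a long thin box or ellipsoid) this ratio is unbounded on parts of $\partial K$, so $\liminf_{\sigma\downarrow 0}\int 2|x||\nabla f_\sigma|\,d\mu$ is not comparable to $\mu^*(K)$. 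Consequently feeding your weaker inequality into the reconstruction does not yield \eqref{e:DilKappa} with any universal constant, let alone with $\kappa=(\kappa_1^{-1}+\kappa_2^{-1})^{-1}$.

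The fix is to keep the sharper derivative $\Phi$ through the whole computation rather than passing to $|x|\,|\nabla f|$, and to avoid the Cauchy--Schwarz recombination, which is both lossy and unnecessary. Apply the factor inequality in the form of Theorem~\ref{t:FDI} (entropy bounded by $\kappa_i^{-1}\int\Phi\,d\mu_i$) to $g(x_1)=\int f(x_1,\cdot)\,d\mu_2$ and to the slices $f(x_1,\cdot)$. For block-unconditional quasi-convex $f$ one has the exact pointwise facts $\Phi_{f(\cdot,x_2)}(x_1)\le\Phi_f(x_1,x_2)$, $\Phi_{f(x_1,\cdot)}(x_2)\le\Phi_f(x_1,x_2)$ and $\Phi_g(x_1)\le\int\Phi_f(x_1,x_2)\,d\mu_2(x_2)$ (in the $C^1$ case these reduce to $\langle x_1,\nabla_{x_1}f\rangle+\langle x_2,\nabla_{x_2}f\rangle=\langle x,\nabla f\rangle$ with each summand nonnegative), so the two terms add up directly to $(\kappa_1^{-1}+\kappa_2^{-1})\int\Phi_f\,d(\mu_1\otimes\mu_2)$, and Theorem~\ref{t:ReConstDI} then applies to the block-unconditional $f_\sigma$ as you intended. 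This is exactly the route the paper takes; note also that the paper uses the chain--rule identity $\Ent_{\mu_1\otimes\mu_2}(f)=\Ent_{\mu_1}(g)+\int\Ent_{\mu_2}(f(x_1,\cdot))\,d\mu_1$ rather than the subadditivity upper bound, and must verify that the slice functions $f(x_1,\cdot)$ and the conditional average $g$ indeed belong to the relevant ${\rm QC}^1$ classes, a verification you flagged but did not carry out.
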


The structure of the rest of this paper is as follows.
In Section \ref{s:FDI}, we introduce the class of functions including good enough symmetric quasi-convex functions and define certain derivative as a counterpart of the gradient.
After that, we show the functional form of the dilation inequality which leads to Theorem \ref{t:FDIIntro}. 
In Section \ref{s:App}, we give some applications which follow from Theorems \ref{t:FDIIntro} and \ref{t:FDI}. 
More precisely, we show the reverse Shannon inequality, logarithmic Sobolev inequality, Kahane--Khintchine inequality, deviation inequality and the estimate of isoperimetry. 
In the final section, we show the dilation inequality from the functional inequality constructed in Section \ref{s:FDI}, and confirm the equivalence between the dilation inequality and the functional form. 
As corollaries, we give stability results of the dilation inequality via bounded perturbations and tensorization.

\section{Functional inequality derived from the dilation inequality}\label{s:FDI}

Our goal in this section is to give a functional form of the dilation inequality \eqref{e:DilKappa} and show Theorem \ref{t:FDIIntro} as its special case. 

In what follows, let $\Omega \subset \R^n$ be a symmetric convex domain. 
We say that a function $f\colon \Omega \to \R$ is quasi-convex if a set 
$\{x \in \Omega \mid f(x) < \lambda\}$ is convex for any $\lambda \in \R$, or equivalently it holds that 
$$
f((1-t)x + ty) \le \max \{ f(x), f(y)\}, \;\;\; \forall x, y \in \Omega, \forall t \in [0,1]. 
$$ 
For instance, all convex functions are quasi-convex. 
Another example is $|\cdot|^p$ on $\R^n$ for $p>0$ which is quasi-convex, but not convex when $p \in (0,1)$. 
This example also implies that quasi-convexity does not yield convexity. 
It is also known that a continuous function $f$ on $\R$ is quasi-convex if and only if 
$f$ is either monotone on $\R$ or there exists some point $x_0 \in \R$ such that $f$ is monotone decreasing on $(-\infty, x_0]$ and monotone increasing on $[x_0, \infty)$ (see \cite[Proposition 3.8 and Proposition 3.9]{ADSZ10}). 
Moreover, if a function $f$ on $\Omega \subset \R^n$ is continuously differentiable, then 
quasi-convexity of $f$ is characterized by 
$$
\langle x - y, \nabla f(x) \rangle \ge 0
$$
for any $x, y \in \Omega$ with $f(x) \ge f(y)$ (see \cite[Theorem 3.1]{ADSZ10}). 
In particular, when $f$ is symmetric (which means that $f(x)=f(-x)$ for any $x \in \Omega$), then we have 
$$
\langle x, \nabla f(x) \rangle \ge 0, \;\;\; \forall x \in \Omega
$$
since $f(0) = \min_{x \in \Omega} f(x)$ by quasi-convexity and symmetry of $f$. 
A reader is referred to \cite{ADSZ10} for more information on quasi-convexity. 

Given a symmetric quasi-convex function $f \colon \Omega \to [0, \infty)$, we define a function $\Phi_f \colon \R^n \to [0, \infty]$ by 
\begin{equation}\label{e:DefPhi_f}
\Phi_f(x) \coloneqq \limsup_{\varepsilon \downarrow 0} \frac{f(x) - f( \frac{1-\varepsilon}{1+\varepsilon} x)}{\varepsilon}, \quad x \in \Omega.
\end{equation}
Since $f$ is a nonnegative and symmetric quasi-convex function, for any $\varepsilon \in (0,1)$ and $x \in \Omega$, it holds that $f(x) \ge f(\frac{1-\varepsilon}{1+\varepsilon} x)$, and thus that $\Phi_f$ is always nonnegative. 
In particular, when $f$ is continuously differentiable, we see that 
\begin{equation}\label{e:SpePhi_f1}
\Phi_f(x) =2 \langle x, \nabla f(x) \rangle, \;\;\; x \in \Omega.
\end{equation}
An important example is a norm. Let $\| \cdot \|_K$ for $K \in \K_s^n(\R^n)$ be a nonnegative function on $\R^n$ defined by 
\begin{equation}\label{e:Norm}
\|x\|_K \coloneqq \inf \{ \lambda >0 \mid x \in \lambda K \}, \;\;\;x \in \R^n. 
\end{equation}
We call it the gauge function of $K$. 
If $\overline{K}$ is a convex body, then the gauge function $\|\cdot\|_K$ is exactly a norm whose closed unit ball is $\overline{K}$. 
By the definition, we can immediately check that 
\begin{equation}\label{e:SpePhi_f2}
\Phi_{\| \cdot \|_K} = 2 \| \cdot \|_K
\end{equation}
since $\|\cdot\|_K$ is 1-homogeneous. 
We remark that $\| \cdot\|_K$ is not continuously differentiable at least at the origin. An advantage of the definition of \eqref{e:DefPhi_f} is that we may not suppose certain regularities of $f$ and thus we can consider a non-differentiable function on the whole space like a norm. 
We also note that by the definition, $\Phi_f$ is Borel measurable when $\Phi_f$ is finite and $f$ is continuous on $\Omega$.

Next, let $\mu$ be a probability measure supported on $\Omega$. 
We denote by $\mathrm{QC}(\Omega, \mu)$ all nonnegative, continuous and symmetric quasi-convex functions $f$ on $\Omega$ such that there exists a nonnegative Borel function $g \colon \Omega \to [0, \infty)$ in $L^1(\mu)$ and small enough $\varepsilon_0 \in (0, 1]$ satisfying 
\begin{equation}\label{e:CondQC}
\sup_{\varepsilon \in (0, \varepsilon_0)} \frac{f(x) - f( \frac{1-\varepsilon}{1+\varepsilon} x)}{\varepsilon} \le g(x), \;\;\; \forall x \in \Omega. 
\end{equation}
We may replace $\varepsilon_0$ with $1$ in \eqref{e:CondQC} when $f \in L^1(\mu)$. 
To see this, let $f,g$ and $\varepsilon_0$ be as above. 
Then we can observe that 
for any $\varepsilon \in (0, 1)$, 
$$
\frac{f(x) - f( \frac{1-\varepsilon}{1+\varepsilon} x)}{\varepsilon} 
\le g(x) + \frac1\varepsilon_0 f(x), \;\;\; \forall x \in \Omega.  
$$
This fact implies that we can take a function $\widetilde{g} \in L^1(\mu)$ satisfying 
$$
\sup_{\varepsilon \in (0, 1)} \frac{f(x) - f( \frac{1-\varepsilon}{1+\varepsilon} x)}{\varepsilon} \le \widetilde{g}(x), \;\;\; \forall x \in \Omega. 
$$

We remark that by the definition, if $f \in {\rm QC}(\Omega, \mu)$, then $af$ and $f + \alpha$ for any $a>0$ and $\alpha \ge - \inf_{x \in \Omega} f(x)$ also belong to ${\rm QC}(\Omega, \mu)$, and in particular, we have $\Phi_{af} = a\Phi_f$ and $\Phi_{f + \alpha} = \Phi_f$. 

An important example belonging to ${\rm QC}(\Omega, \mu)$ is a norm. 
Indeed, we can easily check that the gauge function $\| \cdot \|_K$ for $K \in \K_s^n(\Omega)$ is in ${\rm QC}(\Omega, \mu)$ when $\mu$ has finite first moment, namely $\int_\Omega |x|\, d\mu(x) < +\infty$. 
More generally, we can ensure that ${\rm QC}(\Omega, \mu)$ includes good locally Lipschitz and symmetric quasi-convex functions. 
Here a function $f \colon \Omega \to \R$ is locally Lipschitz if for any $x \in \Omega$, there exists some $r>0$ such that $f$ is Lipschitz on $B(x;r) \coloneqq \{ y \in \Omega \mid |x-y| <r\}$, or equivalently 
$$
| \nabla f(z) | \coloneqq \limsup_{y \to x} \frac{|f(y)-f(z)|}{|y-z|}
$$
is finite on $B(x;r)$. 

\begin{proposition}\label{p:ClassQC}
Let $\mu$ be a probability measure supported on a bounded symmetric convex domain $\Omega \subset \R^n$. 
Let $f$ be a nonnegative, continuous and symmetric quasi-convex function on some neighborhood of $\overline{\Omega}$. 
If $f$ is locally Lipschitz on $\overline{\Omega}$, then it holds that $f \in {\rm QC}(\Omega, \mu)$ and $\Phi_f(x) \le 2 |x| |\nabla f(x)|$ for any $x \in \Omega$. 
\end{proposition}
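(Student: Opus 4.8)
The plan is to prove the two assertions of Proposition~\ref{p:ClassQC} in turn: first the pointwise bound $\Phi_f(x) \le 2|x|\,|\nabla f(x)|$, which is essentially a one-dimensional differentiation estimate along the ray through $x$; then membership $f \in {\rm QC}(\Omega,\mu)$, which amounts to producing a single $L^1(\mu)$ majorant $g$ for the difference quotients appearing in \eqref{e:CondQC}, uniformly in $\varepsilon$.

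For the pointwise bound, fix $x \in \Omega$ with $x \ne 0$ (the case $x=0$ is trivial since $f(0)$ is the minimum, so both sides vanish). For small $\varepsilon>0$ write $x_\varepsilon := \tfrac{1-\varepsilon}{1+\varepsilon}x$ and note $|x - x_\varepsilon| = \tfrac{2\varepsilon}{1+\varepsilon}|x|$. Since $f$ is locally Lipschitz on $\overline\Omega$, it is Lipschitz with some constant $L$ on a ball $B(x;r)$ containing the segment $[x_\varepsilon, x]$ for all $\varepsilon$ below a threshold $\varepsilon_1 = \varepsilon_1(x)$; hence
$$
\frac{f(x) - f(x_\varepsilon)}{\varepsilon} \le L \cdot \frac{|x - x_\varepsilon|}{\varepsilon} = L\cdot\frac{2|x|}{1+\varepsilon} \le 2L|x|.
$$
This already shows $\Phi_f(x) < +\infty$. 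To get the sharp constant $2|x|\,|\nabla f(x)|$ rather than $2|x|L$, I would instead estimate the difference quotient directly: $f(x)-f(x_\varepsilon) \le \big(\sup_{z \in [x_\varepsilon,x]} |\nabla f(z)|\big)\,|x - x_\varepsilon|$ (a consequence of local Lipschitzness applied on the segment, via the mean-value estimate for the Lipschitz function $t \mapsto f((1-t)x_\varepsilon + tx)$), divide by $\varepsilon$, and pass to the $\limsup$ as $\varepsilon \downarrow 0$; since $x_\varepsilon \to x$ and $|\nabla f|$ is upper semicontinuous where it is defined (being a $\limsup$ of continuous-in-$y$ quotients), $\sup_{z \in [x_\varepsilon,x]}|\nabla f(z)| \to |\nabla f(x)|$, while $|x-x_\varepsilon|/\varepsilon \to 2|x|$. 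This yields $\Phi_f(x) \le 2|x|\,|\nabla f(x)|$.

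For membership in ${\rm QC}(\Omega,\mu)$, I need the majorant $g$ to be independent of $\varepsilon$ and in $L^1(\mu)$. Here the hypotheses that $\Omega$ is \emph{bounded} and that $f$ is locally Lipschitz on $\overline\Omega$ (hence on a compact set) do the work: by compactness of $\overline\Omega$ together with local Lipschitzness on a neighborhood of $\overline\Omega$, $f$ is in fact globally Lipschitz on $\overline\Omega$ with a single constant $L$, and $\overline\Omega \subset R\,{\rm B}_2^n$ for some $R<\infty$. Then for every $x\in\Omega$ and every $\varepsilon\in(0,1)$ the segment $[x_\varepsilon,x]$ lies in $\overline\Omega$ (by symmetry and convexity), so the computation above gives $\tfrac{f(x)-f(x_\varepsilon)}{\varepsilon} \le 2L|x| \le 2LR$, a constant; constants are trivially in $L^1(\mu)$ since $\mu$ is a probability measure. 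Thus one may take $g \equiv 2LR$ and $\varepsilon_0 = 1$, establishing $f \in {\rm QC}(\Omega,\mu)$.

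The main obstacle is the first part, specifically extracting the sharp constant $|\nabla f(x)|$ (as opposed to a crude global Lipschitz bound) while $f$ is only assumed locally Lipschitz and not differentiable: one must justify the segmentwise mean-value-type inequality $f(x)-f(x_\varepsilon)\le \sup_{[x_\varepsilon,x]}|\nabla f|\cdot|x-x_\varepsilon|$ for a merely locally Lipschitz $f$ (e.g.\ by noting the restriction to the segment is absolutely continuous, differentiable a.e., with a.e.\ derivative bounded by $|\nabla f|$ times the speed), and then control the behavior of $\sup_{[x_\varepsilon,x]}|\nabla f|$ as $\varepsilon\downarrow 0$ — which is where upper semicontinuity of the local Lipschitz modulus is used. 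Everything else is bookkeeping with the elementary identity $x - \tfrac{1-\varepsilon}{1+\varepsilon}x = \tfrac{2\varepsilon}{1+\varepsilon}x$.
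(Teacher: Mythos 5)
Your second part — showing $f \in {\rm QC}(\Omega,\mu)$ via compactness of $\overline{\Omega}$, a uniform Lipschitz constant, and the bound $\tfrac{f(x)-f(x_\varepsilon)}{\varepsilon} \le 2L|x| \le 2LR$ — is correct and is essentially the paper's argument.

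The first part, the pointwise bound $\Phi_f(x) \le 2|x|\,|\nabla f(x)|$, has a genuine gap, and it comes from taking an unnecessary detour. You estimate $f(x)-f(x_\varepsilon)$ by a mean-value inequality along the segment $[x_\varepsilon,x]$, which forces you to control $\sup_{z\in[x_\varepsilon,x]}|\nabla f(z)|$ as $\varepsilon \downarrow 0$, and for that you invoke upper semicontinuity of $z\mapsto|\nabla f(z)|$. But the local Lipschitz modulus $|\nabla f(z)| = \limsup_{y\to z}\tfrac{|f(y)-f(z)|}{|y-z|}$ is \emph{not} upper semicontinuous for a general locally Lipschitz function, and the reason you offer (``being a $\limsup$ of continuous-in-$y$ quotients'') is not a valid argument — a $\limsup$ taken as $y\to z$ does not inherit semicontinuity in $z$ from continuity in $y$. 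A one-dimensional counterexample: set $f(0)=0$ and, near each point $z_k=2^{-k}$, let $f$ have a symmetric triangular bump of half-width $2^{-2k-1}$ and slope $\pm 1$, with $f\equiv 0$ elsewhere. Then $|\nabla f(z_k)|=1$ for all $k$ while $|\nabla f(0)|=\limsup_{y\to0}|f(y)|/|y|=0$, so USC fails at $0$. (This $f$ is not quasi-convex, so it doesn't directly contradict the proposition — but it does show that your USC step needs an argument specific to the quasi-convex setting, which you don't provide.)

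The detour is unnecessary. Since you are only evaluating the difference quotient at the single competitor $y = x_\varepsilon = \tfrac{1-\varepsilon}{1+\varepsilon}x$, which tends to $x$ as $\varepsilon\downarrow 0$, you can write
\begin{equation*}
\frac{f(x)-f(x_\varepsilon)}{\varepsilon}
= \frac{|f(x_\varepsilon)-f(x)|}{|x_\varepsilon-x|}\cdot\frac{|x_\varepsilon-x|}{\varepsilon}
= \frac{|f(x_\varepsilon)-f(x)|}{|x_\varepsilon-x|}\cdot\frac{2|x|}{1+\varepsilon},
\end{equation*}
and by the very definition of $|\nabla f(x)|$ as $\limsup_{y\to x}\tfrac{|f(y)-f(x)|}{|y-x|}$, for any $\delta>0$ there is $\varepsilon_0>0$ so that the first factor is at most $|\nabla f(x)|+\delta$ for $\varepsilon\in(0,\varepsilon_0)$. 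Taking $\limsup_{\varepsilon\downarrow 0}$ and then $\delta\downarrow 0$ gives $\Phi_f(x)\le 2|x|\,|\nabla f(x)|$ directly — no mean-value estimate on the segment, no semicontinuity of the Lipschitz modulus. This is what the paper does.
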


\begin{proof}
Since $f$ is locally Lipschitz, for any $x \in \overline{\Omega}$, there exist some $\varepsilon(x) \in (0, 1)$, $r(x)>0$ and $M(x)>0$ such that 
\begin{equation}\label{e:4/18}
\frac{|f(\frac{1-\varepsilon}{1+\varepsilon}y)-f(y)|}{|\frac{1-\varepsilon}{1+\varepsilon}y-y|} \le M(x), \;\;\; \forall \varepsilon \in (0, \varepsilon(x)), \forall y \in B(x, r(x)).
\end{equation}
Since $\overline{\Omega}$ is compact, we can take finite points $\{x_k\}_{k=1}^N \subset \overline{\Omega}$ ($N \in \mathbb{N}$) such that 
$$
\frac{|f(\frac{1-\varepsilon}{1+\varepsilon}y)-f(y)|}{|\frac{1-\varepsilon}{1+\varepsilon}y-y|} \le \max_{k=1,2,\dots, N}  M(x_k), \;\;\; \forall \varepsilon \in (0, \overline{\varepsilon}), \forall y \in \Omega, 
$$
where we set $\overline{\varepsilon} \coloneqq \min_{k=1, 2,\dots, N} \varepsilon(x_k)>0$. 
In particular, we obtain 
$$
\frac{|f(\frac{1-\varepsilon}{1+\varepsilon}y)-f(y)|}{\varepsilon}
\le
\frac{2}{1+\varepsilon} |y| \max_{k=1,2,\dots, N}  M(x_k)
\le
2{\rm diam}\, \Omega \max_{k=1,2,\dots, N}  M(x_k)
$$
for any $\varepsilon \in (0, \overline{\varepsilon})$ and $y \in \Omega$, which ensures \eqref{e:CondQC}. 
Hence we enjoy $f \in {\rm QC}(\Omega, \mu)$. 

In particular, by the definition, for any $\delta>0$ and $x \in \Omega$, we can take some $\varepsilon_0 \in (0,1)$ depending on  $\delta$ and $x$ such that 
$$
\frac{|f(\frac{1-\varepsilon}{1+\varepsilon}x)-f(x)|}{|\frac{1-\varepsilon}{1+\varepsilon}x-x|} \le
(1+\delta) |\nabla f(x)|, \;\;\; \forall \varepsilon \in (0, \varepsilon_0),  
$$
from which we see that 
\begin{align*}
\frac{f(x) - f( \frac{1-\varepsilon}{1+\varepsilon} x)}{\varepsilon}
\le
\frac{2(1+\delta)}{1+\varepsilon} |x||\nabla f(x)|
\le
2(1+\delta) |x||\nabla f(x)|. 
\end{align*}
Letting $\varepsilon \downarrow 0$, we have 
$$
\Phi_f(x)
\le
2(1+\delta) |x||\nabla f(x)|. 
$$
Since $\delta>0$ is arbitrary, we obtain $\Phi_f(x) \le 2 |x||\nabla f(x)|$ for any $x \in \Omega$.
\end{proof}

Moreover, when $f$ is a convex function instead of a quasi-convex function in Proposition \ref{p:ClassQC}, we can specify $\Phi_f$. 

\begin{proposition}\label{p:ClassQC2}
Let $\mu$ be a probability measure supported on a symmetric convex domain $\Omega \subset \R^n$. 
If a symmetric function $f \colon \Omega \to [0, \infty)$ is convex, then  it holds that $\Phi_f(x) = 2 \inf_{y \in \partial f(x)} \langle x, y \rangle$ for any $x \in \Omega$. 
Moreover, if we have $\int_\Omega \inf_{y \in \partial f(x)} \langle x, y \rangle \, d\mu(x) < + \infty$, then $f \in {\rm QC}(\Omega, \mu)$. 
Here $\partial f(x) \subset \R^n$ is the subdifferential of $f$ at $x \in \Omega$, namely
$$
y \in \partial f(x) 
\;\;\; \Longleftrightarrow \;\;\; 
\left[ f(z) \ge f(x) + \langle y, z-x \rangle, \;\;\; \forall z \in \Omega \right].
$$
\end{proposition}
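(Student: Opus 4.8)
The plan is to reduce the computation of $\Phi_f(x)$ to the one–variable convex function $\phi(t)\coloneqq f(tx)$, which is defined and finite for $t$ in an open interval containing $[0,1]$ (here openness of $\Omega$ and $x\in\Omega$ enter; the case $x=0$ is trivial since both sides of the claimed identity vanish). First I would carry out the substitution $s=\frac{1-\varepsilon}{1+\varepsilon}$ in \eqref{e:DefPhi_f}. Writing $h\coloneqq 1-s=\frac{2\varepsilon}{1+\varepsilon}$, one has $h/\varepsilon=\frac{2}{1+\varepsilon}\to 2$ as $\varepsilon\downarrow 0$, so
$$
\frac{f(x)-f(\tfrac{1-\varepsilon}{1+\varepsilon}x)}{\varepsilon}=\frac{\phi(1)-\phi(1-h)}{h}\cdot\frac{2}{1+\varepsilon}.
$$
Since $\phi$ is convex, the secant slope $\frac{\phi(1)-\phi(1-h)}{h}$ is monotone in $h$ and converges, as $h\downarrow 0$, to the finite left derivative $\phi'_-(1)$; hence the $\limsup$ in \eqref{e:DefPhi_f} is actually a limit and $\Phi_f(x)=2\phi'_-(1)$.

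Next I would identify $\phi'_-(1)$ with a directional derivative of $f$. Because $\Omega$ is open and $f$ is finite on $\Omega$, every $x\in\Omega$ lies in the interior of the effective domain of the $+\infty$–extension of $f$, so standard convex subdifferential calculus applies at $x$: $\partial f(x)$ is a nonempty compact convex set, and for each direction $v$ the one–sided directional derivative is $f'(x;v)=\max_{y\in\partial f(x)}\langle y,v\rangle$. Taking $v=-x$ gives, on one hand, $f'(x;-x)=\lim_{h\downarrow 0}\frac{f((1-h)x)-f(x)}{h}=-\phi'_-(1)$, and on the other hand $f'(x;-x)=-\min_{y\in\partial f(x)}\langle x,y\rangle$. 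Therefore
$$
\Phi_f(x)=2\phi'_-(1)=2\min_{y\in\partial f(x)}\langle x,y\rangle=2\inf_{y\in\partial f(x)}\langle x,y\rangle,
$$
the infimum being attained on the compact set $\partial f(x)$; this proves the first assertion (and it is consistent with \eqref{e:SpePhi_f1}, since $\partial f(x)=\{\nabla f(x)\}$ in the $C^1$ case).

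For the membership $f\in\mathrm{QC}(\Omega,\mu)$, I would reuse the monotonicity of the secant slope, namely $\frac{\phi(1)-\phi(1-h)}{h}\le\phi'_-(1)$ for every $h\in(0,1]$, which together with $\frac{2}{1+\varepsilon}\le 2$ gives
$$
\sup_{\varepsilon\in(0,1)}\frac{f(x)-f(\tfrac{1-\varepsilon}{1+\varepsilon}x)}{\varepsilon}\le 2\phi'_-(1)=\Phi_f(x)=2\inf_{y\in\partial f(x)}\langle x,y\rangle\eqqcolon g(x),\qquad x\in\Omega.
$$
Here $g\ge 0$, because for $y\in\partial f(x)$ the subgradient inequality at $z=0$ together with $f(0)=\min_\Omega f$ (from convexity and symmetry) gives $\langle x,y\rangle\ge f(x)-f(0)\ge 0$; the function $g=\Phi_f$ is Borel, as already noted in the text, since $\Phi_f$ is finite and $f$ is continuous on $\Omega$; and $g\in L^1(\mu)$ is precisely the standing hypothesis $\int_\Omega\inf_{y\in\partial f(x)}\langle x,y\rangle\,d\mu(x)<+\infty$. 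As $f$ is moreover nonnegative, symmetric, continuous (a finite convex function on an open convex set) and quasi-convex (being convex), this verifies \eqref{e:CondQC} with $\varepsilon_0=1$, hence $f\in\mathrm{QC}(\Omega,\mu)$.

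The only step demanding some care is the appeal to convex analysis in the second paragraph — nonemptiness and compactness of $\partial f(x)$, the max–formula for $f'(x;v)$, and continuity of $f$. All of these hold at every $x\in\Omega$ simply because $\Omega\subseteq\mathrm{int}(\mathrm{dom}\,f)$, so there is no genuine obstacle; the proposition is essentially a dictionary translation between $\Phi_f(x)$, the left derivative of $t\mapsto f(tx)$ at $t=1$, and the directional derivative $f'(x;-x)$.
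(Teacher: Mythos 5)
Your argument is correct, but it takes a genuinely different route from the paper's proof. The paper proves the two inequalities $\Phi_f(x)\le 2\inf_{y\in\partial f(x)}\langle x,y\rangle$ and $\Phi_f(x)\ge 2\inf_{y\in\partial f(x)}\langle x,y\rangle$ by hand: the upper bound comes from the subgradient inequality at $x$ (essentially your display \eqref{e:Cal5/5-2}), and the lower bound is obtained by choosing subgradients $y_k\in\partial f(x_k)$ at the nearby points $x_k=\tfrac{1-\varepsilon_k}{1+\varepsilon_k}x$, proving a uniform bound $|y_k|\le r^{-1}\max_{\overline{B(x;r)}}f$ via continuity of $f$ on a compact ball, extracting a convergent subsequence $y_k\to\widetilde y\in\partial f(x)$, and passing to the limit (with a separate treatment of the degenerate case $y_k\equiv 0$). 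You instead restrict to the ray, set $\phi(t)=f(tx)$, use the three-slopes monotonicity to identify $\Phi_f(x)=2\phi'_-(1)$ and to upgrade the $\limsup$ in \eqref{e:DefPhi_f} to a genuine limit, and then invoke the standard max-formula $f'(x;v)=\max_{y\in\partial f(x)}\langle y,v\rangle$ at $v=-x$ to translate $\phi'_-(1)$ into $\min_{y\in\partial f(x)}\langle x,y\rangle$. Your version is shorter, shows the infimum is attained and the $\limsup$ is a limit with no extra work, and the secant-slope monotonicity gives you the uniform bound \eqref{e:CondQC} with $\varepsilon_0=1$ for free; the trade-off is that you outsource the lower bound to a nontrivial result of convex analysis (which, as you note, applies because $\Omega\subset\mathrm{int}(\mathrm{dom}\,f)$), whereas the paper's compactness argument is self-contained and in effect reproves a special case of that formula.
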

We note that when $f$ is convex, the subdifferential of $f$ is always nonempty on $\Omega$, and in particular $\partial f(x) = \{ \nabla f(x)\}$ if $f$ is differentiable at $x \in \Omega$ (see \cite{Vi03}). 

\begin{remark}
The first assertion in Proposition \ref{p:ClassQC2} 
implies that $\inf_{y \in \partial f(x)} \langle x, y \rangle$ is Borel measurable since $\Phi_f$ is Borel measurable. 
\end{remark}

\begin{proof}
We fix $x \in \Omega$, and firstly show $\Phi_f(x) = 2 \inf_{y \in \partial f(x)} \langle x, y \rangle$. 
%
By the definition of the subdifferential of $f$, we have 
$$
f\left(\frac{1-\varepsilon}{1+\varepsilon} x \right) \ge f(x) + \left\langle y, \frac{1-\varepsilon}{1+\varepsilon} x-x \right\rangle
$$
for any $x \in \Omega$, $\varepsilon \in (0, 1)$ and $y \in \partial f(x)$, which is equivalent to 
$$
\frac{f(x) - f(\frac{1-\varepsilon}{1+\varepsilon} x )}{\varepsilon}
\le
\frac{2}{1+\varepsilon} \langle x, y\rangle. 
$$
Hence we obtain 
\begin{equation}\label{e:Cal5/5-2}
\frac{f(x) - f(\frac{1-\varepsilon}{1+\varepsilon} x )}{\varepsilon}
\le
2 \inf_{y \in \partial f(x)} \langle x, y\rangle. 
\end{equation}
Letting $\varepsilon \downarrow 0$, we conclude $\Phi_f(x) \le 2 \inf_{y \in \partial f(x)} \langle x, y \rangle$.

Next we show $\Phi_f(x) \ge 2 \inf_{y \in \partial f(x)} \langle x, y \rangle$. 
Let $\{\varepsilon_k\}_{k \in \mathbb{N}} \subset (0,1)$ be a monotone decreasing sequence satisfying $\lim_{k \to +\infty} \varepsilon_k=0$ and 
$$
\lim_{k \to +\infty} \frac{f(x)-f(x_k)}{\varepsilon_k} = \Phi_f(x), 
$$
where we set $x_k \coloneqq \frac{1-\varepsilon_k}{1+\varepsilon_k}x$, and take $y_k \in \partial f(x_k)$ for each $k \in \mathbb{N}$. 
If we can take a subsequence $\{ y_{k_\ell}\}_{\ell \in \mathbb{N}}$ of $\{ y_k\}_{k \in \mathbb{N}}$ such that  $y_{k_\ell}=0$ for all $\ell \in \mathbb{N}$, then for any $z \in \Omega$ and $\ell \in \mathbb{N}$, we have $f(z) \ge f(x_{k_\ell})$. 
Letting $\ell\to +\infty$, we enjoy $f(z) \ge f(x)$ for any $z \in \Omega$, which yields $0 \in \partial f(x)$. 
Hence we have $\inf_{y \in \partial f(x)} \langle x, y \rangle=0$. 
On the other hand, we also see that $f(x_{k_\ell})=f(0)$ since $f$ is symmetric and convex. 
Thus it follows that $f(tx)=f(0)$ for any $t \in [0, 1]$, which implies $\Phi_f(x)=0$. 
Hence we have $\Phi_f(x) = 0=2 \inf_{y \in \partial f(x)} \langle x, y \rangle$. 

Therefore we may suppose that $y_k \neq 0$ for all $k \in \mathbb{N}$. 
In addition, without loss of generality, we may suppose that $\{x_k\}_{k \in \mathbb{N}} \subset B(x; r/2)$ and $\overline{B(x;r)}  \subset \Omega$ for some $r>0$, where $B(x;r) \coloneqq \{ w \in \R^n \mid |x-w|<r\}$. 
By the definition of the subdifferential, it holds that 
\begin{equation}\label{e:Cal5/5}
f(z) \ge f(x_k) + \langle y_k, z-x_k \rangle, \;\;\; \forall z \in \Omega
\end{equation}
for each $k \in \mathbb{N}$. 
Inserting $z_k \coloneqq \frac{ry_k}{2|y_k|}+x_k$ in $z$, we obtain 
$$
f(z_k) \ge f(x_k) + \frac{r}{2} |y_k|. 
$$
Here we remark that $|x -z_k| \le r/2 + |x_k-x| <r$, and thus $z_k \in \Omega$. 
Moreover we have $z_k, x_k \in B(x;r)$. Hence since $\overline{B(x;r)} \subset \Omega$ and $f$ is continuous, we have 
$$
|y_k| \le \frac1r \max_{w \in  \overline{B(x;r)}} f(w), \;\;\; \forall k \in \mathbb{N}. 
$$
Hence we can take a subsequence of $\{y_k\}_{k \in \mathbb{N}}$ converging to some $\widetilde{y} \in \R^n$. 
Without loss of generality, we may suppose that $\lim_{k \to +\infty} y_k =\widetilde{y} $. 
Letting $k \to +\infty$ in \eqref{e:Cal5/5}, we have 
$$
f(z) \ge f(x) + \langle \widetilde{y} , z-x \rangle, \;\;\; \forall z \in \Omega, 
$$
which implies that $\widetilde{y}  \in \partial f(x)$. 
Moreover, it follows from \eqref{e:Cal5/5} that 
$$
f(x) \ge f(x_k) + \langle y_k, x-x_k \rangle, 
$$
which yields that 
$$
\frac{f(x) - f(x_k)}{\varepsilon_k}
\ge
\frac{2}{1+\varepsilon_k} \langle y_k, x \rangle. 
$$
Letting $k \to +\infty$, we obtain that 
$$
\Phi_f(x)  \ge 2 \langle \widetilde{y} , x \rangle  
\ge 
2 \inf_{y \in \partial f(x)} \langle x, y \rangle.
$$
This is the desired assertion. 

Finally, 
 \eqref{e:Cal5/5-2} and $\int_\Omega \inf_{y \in \partial f(x)} \langle x, y \rangle \, d\mu(x) < + \infty$ imply that $f \in {\rm QC}(\Omega, \mu)$.  
\end{proof}

To show our main result, we firstly give the following co-area formula associated with the dilation area, which has been appeared in \cite{Ts21} with a more weaker form. 

\begin{proposition}\label{p:CoareaForm}
Let $\mu$ be a probability measure supported on a symmetric convex domain $\Omega \subset \R^n$ and let $p>0$. 
Then for any nonnegative function $f$ with $f^p \in \mathrm{QC}(\Omega, \mu)$, we have 
\begin{equation}\label{e:CoareaForm}
\int_0^{\infty} t^{p-1}\mu^*(\{ x \in \R^n \mid f(x) < t\} ) \,dt \le \int_\Omega f^{p-1}\Phi_{f} \,d\mu.
\end{equation}
Moreover, for any positive function $f$ with $f^p \in \mathrm{QC}(\Omega, \mu)$, we also have 
\begin{equation}\label{e:NegCoareaForm}
\int_0^{\infty} t^{p-1} \mu^*\left(\left\{ x \in \R^n \,\Bigg{|}\, \frac{1}{f(x)} > t \right\} \right) \,dt \le  \int_\Omega f^{-p-1} \Phi_{f} \,d\mu.
\end{equation}
\end{proposition}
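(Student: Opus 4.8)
The plan is to reduce both inequalities to a single computation using the definition of $\mu^*$ together with the pointwise bound defining $\mathrm{QC}(\Omega,\mu)$, and then exchange the order of a limit and an integral via Fatou's lemma. I would start with \eqref{e:CoareaForm}. Fix $t>0$ and observe that the superlevel-complement $\{f<t\}=\{f^p<t^p\}$ is a symmetric open convex set (by continuity and quasi-convexity of $f^p$), hence lies in $\K_s^n(\R^n)$; write $K_t\coloneqq\{f^p<t^p\}$. For $\varepsilon\in(0,1)$ one has $(K_t)_\varepsilon=\frac{1+\varepsilon}{1-\varepsilon}K_t=\{x\mid f^p(\tfrac{1-\varepsilon}{1+\varepsilon}x)<t^p\}$, so that
\[
\mu\big((K_t)_\varepsilon\big)-\mu(K_t)=\mu\Big(\Big\{x\ \Big|\ f^p\big(\tfrac{1-\varepsilon}{1+\varepsilon}x\big)<t^p\le f^p(x)\Big\}\Big)
=\int_\Omega \mathbf 1_{\{f^p(\frac{1-\varepsilon}{1+\varepsilon}x)<t^p\le f^p(x)\}}\,d\mu(x).
\]
Multiplying by $p\,t^{p-1}$ and integrating in $t$ over $(0,\infty)$, Tonelli's theorem turns the inner $t$-integral into $\big[f^p(x)-f^p\big(\tfrac{1-\varepsilon}{1+\varepsilon}x\big)\big]_+=f^p(x)-f^p\big(\tfrac{1-\varepsilon}{1+\varepsilon}x\big)$ (nonnegativity uses symmetric quasi-convexity), so
\[
\int_0^\infty p\,t^{p-1}\,\frac{\mu((K_t)_\varepsilon)-\mu(K_t)}{\varepsilon}\,dt
=\int_\Omega \frac{f^p(x)-f^p\big(\frac{1-\varepsilon}{1+\varepsilon}x\big)}{\varepsilon}\,d\mu(x).
\]

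Now I would take $\liminf_{\varepsilon\downarrow0}$ on both sides. On the left, since each integrand $p\,t^{p-1}\frac{\mu((K_t)_\varepsilon)-\mu(K_t)}{\varepsilon}\ge0$, Fatou's lemma gives
\[
\int_0^\infty p\,t^{p-1}\,\mu^*(K_t)\,dt\le\liminf_{\varepsilon\downarrow0}\int_\Omega \frac{f^p(x)-f^p\big(\frac{1-\varepsilon}{1+\varepsilon}x\big)}{\varepsilon}\,d\mu(x).
\]
On the right, the hypothesis $f^p\in\mathrm{QC}(\Omega,\mu)$ supplies a dominating function $g\in L^1(\mu)$ with $\frac{f^p(x)-f^p(\frac{1-\varepsilon}{1+\varepsilon}x)}{\varepsilon}\le g(x)$ for $\varepsilon\in(0,\varepsilon_0)$, so the reverse Fatou lemma (dominated $\limsup$) yields
\[
\limsup_{\varepsilon\downarrow0}\int_\Omega \frac{f^p(x)-f^p\big(\frac{1-\varepsilon}{1+\varepsilon}x\big)}{\varepsilon}\,d\mu(x)\le\int_\Omega \limsup_{\varepsilon\downarrow0}\frac{f^p(x)-f^p\big(\frac{1-\varepsilon}{1+\varepsilon}x\big)}{\varepsilon}\,d\mu(x)=\int_\Omega \Phi_{f^p}\,d\mu.
\]
Finally I would replace $\Phi_{f^p}$ by $p f^{p-1}\Phi_f$: when $f^p$ is differentiable this is the chain rule via \eqref{e:SpePhi_f1}, and in general one checks directly from the definition \eqref{e:DefPhi_f}, writing $f^p(x)-f^p(\tfrac{1-\varepsilon}{1+\varepsilon}x)=\big(f(x)-f(\tfrac{1-\varepsilon}{1+\varepsilon}x)\big)\sum_{j=0}^{p-1}\cdots$ for integer $p$ and, for general $p$, using $a^p-b^p\le p a^{p-1}(a-b)$ for $0\le b\le a$ to get $\Phi_{f^p}\le p f^{p-1}\Phi_f$ pointwise (this one-sided bound is all that is needed). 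Combining the three displays gives \eqref{e:CoareaForm}. For \eqref{e:NegCoareaForm} I would run the same argument with the convex sets $\{1/f>t\}=\{f<1/t\}=\{f^p<t^{-p}\}$ in place of $K_t$; the substitution $s=1/t$ in the $t$-integral converts $\int_0^\infty t^{p-1}\mu^*(\{1/f>t\})\,dt$ into $\int_0^\infty s^{-p-1}\mu^*(\{f<s\})\,ds$, and then the identical Fatou/domination steps together with the pointwise bound $\Phi_{f^p}\le p f^{p-1}\Phi_f$, divided through by $f^{2p}$, produce $\int_\Omega f^{-p-1}\Phi_f\,d\mu$ on the right.

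The main obstacle I anticipate is the interchange of $\liminf_{\varepsilon\downarrow0}$ with the $t$-integral on the left-hand side: one must be careful that $\mu^*(K_t)=\liminf_\varepsilon\frac{\mu((K_t)_\varepsilon)-\mu(K_t)}{\varepsilon}$ is exactly the pointwise $\liminf$ of the integrands in $t$, which is fine, and that Fatou applies because these are genuinely nonnegative (here symmetry and convexity of $K_t$, giving $(K_t)_\varepsilon\supset K_t$, is essential). A secondary technical point is measurability: one needs $t\mapsto\mu((K_t)_\varepsilon)$ and $t\mapsto\mu^*(K_t)$ to be measurable, which follows since $t\mapsto\mu(K_t)$ is monotone and $\mu^*(K_t)$ is a countable $\liminf$ of differences of such. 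Everything else — the Tonelli exchange, the domination on the right, and the pointwise comparison $\Phi_{f^p}\le pf^{p-1}\Phi_f$ — is routine.
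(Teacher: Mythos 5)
Your proof follows the paper's argument essentially step for step: trade the $t$-integral for a $\mu$-integral at each fixed $\varepsilon$ via Tonelli/layer cake, apply Fatou on the $t$-integral to bring $\liminf_{\varepsilon}$ inside and produce $\mu^*$, apply reverse Fatou on the $\mu$-integral using the domination built into the definition of $\mathrm{QC}(\Omega,\mu)$, and finally convert $\Phi_{f^p}$ to $pf^{p-1}\Phi_f$. Whether one does Tonelli first and then Fatou (you) or the reverse (the paper) is immaterial; the ingredients are the same.

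The one substantive misstep is your justification of $\Phi_{f^p}\le p f^{p-1}\Phi_f$ for noninteger $p$: the elementary bound $a^p-b^p\le p\,a^{p-1}(a-b)$ for $0\le b\le a$ is \emph{false} when $0<p<1$, where concavity of $t\mapsto t^p$ reverses it (take $a=4$, $b=1$, $p=1/2$: the left side is $1$ but the right side is $3/4$), and the proposition is stated for all $p>0$. What one should use --- and what the paper does by invoking ``the definition of $\Phi_f$ and continuity of $f$'' --- is the exact identity $\Phi_{f^p}=pf^{p-1}\Phi_f$, obtained by writing
\[
\frac{f^p(x)-f^p\bigl(\tfrac{1-\varepsilon}{1+\varepsilon}x\bigr)}{\varepsilon}
=\frac{f^p(x)-f^p\bigl(\tfrac{1-\varepsilon}{1+\varepsilon}x\bigr)}{f(x)-f\bigl(\tfrac{1-\varepsilon}{1+\varepsilon}x\bigr)}
\cdot\frac{f(x)-f\bigl(\tfrac{1-\varepsilon}{1+\varepsilon}x\bigr)}{\varepsilon}
\]
and noting that the first factor tends to $pf(x)^{p-1}$ as $\varepsilon\downarrow0$ when $f(x)>0$ (if $f(x)=0$, symmetric quasi-convexity forces $f\equiv0$ on $[0,x]$ and both $\Phi_f(x)$ and $\Phi_{f^p}(x)$ vanish). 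With that fix, the rest of your argument --- including the $s=1/t$ substitution for the second inequality and the resulting $f^{-2p}$ scaling, for which the domination needed for reverse Fatou uses $f\ge f(0)>0$, exactly as the paper records by inserting the factor $f(0)^{-2p}$ --- goes through and coincides with the paper's proof.
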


We remark that $\mu^*(\{ x \in \R^n \mid f(x) < t\} )$ is Borel measurable in $t$ since $\{ x \in \R^n \mid f(x) < t\}$ is monotone in $t$ and $\mu$ has the finite mass. 

\begin{proof}
Since $f^p \in \mathrm{QC}(\Omega, \mu)$ and $p>0$, $f$ is a nonnegative, continuous and symmetric quasi-convex function, and thus $\{ x \in \R^n \mid f(x) < \lambda\}$ is a symmetric open convex set for any $\lambda > 0$, from which it holds 
$$
\{ x \in \R^n \mid f(x) < \lambda\}_\varepsilon 
= \frac{1+ \varepsilon}{1-\varepsilon} \{ x \in \R^n \mid  f(x) < \lambda \}
=  \left\{ x \in \R^n \Bigg{|}  f \left(\frac{1- \varepsilon}{1+\varepsilon}x \right) < \lambda \right\}
$$ 
for any $\lambda > 0$.
Hence it follows from Fatou's lemma and $\mu(\R^n)=1$ that 
\begin{align*}
&\int_0^{\infty} t^{p-1} \mu^*(\{x \in \R^n \mid f(x) < t\}) \, dt 
\\
=&
\int_0^{\infty}t^{p-1} \liminf_{\varepsilon \downarrow 0}  \frac{\mu(\{ x\in \R^n \mid f(x) < t \}_{\varepsilon}) -\mu(\{ x \in \R^n \mid f(x) < t \})}{\varepsilon}\, dt 
\\
\le& 
\liminf_{\varepsilon \downarrow 0} \int_0^{\infty}  t^{p-1} \frac{\mu(\{ x \in \R^n \mid f(x) < t \}_{\varepsilon}) -\mu(\{ x \in \R^n \mid f(x) < t \})}{\varepsilon}\, dt 
\\
=&
\liminf_{\varepsilon \downarrow 0} \int_0^{\infty} \frac{1}{\varepsilon} t^{p-1}\left[ \mu\left( \left\{ x \in \R^n \Bigg{|}  f \left(\frac{1- \varepsilon}{1+\varepsilon}x \right) < t \right\} \right) -\mu( \{ x \in \R^n \mid f(x) < t \} ) \right]\, dt 
\\
=& 
\liminf_{\varepsilon \downarrow  0} \int_0^{\infty} \frac{1}{\varepsilon} t^{p-1}\left[\mu(\{ x \in \R^n \mid f(x) \ge t\}) - \mu\left( \left\{ x \in \R^n \Bigg{|}  f \left(\frac{1- \varepsilon}{1+\varepsilon}x \right) \ge t \right\} \right) \right]\, dt 
\\
=& 
\frac1p \liminf_{\varepsilon \downarrow 0} \int_{\Omega} \frac{1}{\varepsilon} \left(f^p(x) - f^p\left(\frac{1-\varepsilon}{1+\varepsilon} x\right) \right) \, d\mu(x).
\end{align*}
Moreover, by \eqref{e:CondQC} and $f^p \in {\rm QC}(\Omega, \mu)$, we can justify 
\begin{align*}
\liminf_{\varepsilon \downarrow 0} \int_{\Omega} \frac{1}{\varepsilon} \left(f^p(x) - f^p\left(\frac{1-\varepsilon}{1+\varepsilon} x\right) \right) \, d\mu(x)
\le&
\limsup_{\varepsilon \downarrow 0} \int_{\Omega} \frac{1}{\varepsilon} \left(f^p(x) - f^p\left(\frac{1-\varepsilon}{1+\varepsilon} x\right) \right) \, d\mu(x)
\\
\le&
\int_\Omega \Phi_{f^p}(x)\, d\mu(x), 
\end{align*}
where we used Fatou's lemma again. 
Hence we obtain 
$$
\int_0^{\infty} t^{p-1} \mu^*(\{x \in \R^n \mid f(x) < t\}) \, dt 
\le
\frac1p \int_\Omega \Phi_{f^p}(x)\, d\mu(x). 
$$
Since we see that $\Phi_{f^p} = p f^{p-1} \Phi_f$ by the definition of $\Phi_f$ and continuity of $f$, we can conclude \eqref{e:CoareaForm}. 

Next, we show \eqref{e:NegCoareaForm}. 
We remark that $a \coloneqq \inf_{x \in \Omega} f(x) = f(0) >0$ since $f>0$ on $\Omega$ and $f$ is a symmetric quasi-convex function by $p>0$.
Moreover, $\{ x \in \R^n \mid f(x)^{-1} > t \}$ is a symmetric open convex set for any $t>0$ since $f$ is a continuous  and symmetric quasi-convex function. 
As in the above argument, Fatou's lemma yields that 
\begin{align*}
&\int_0^{\infty} t^{p-1} \mu^*\left(\left\{ x \in \R^n \,\Bigg{|}\, \frac{1}{f(x)} > t \right\} \right) \, dt 
\\
\le& 
\liminf_{\varepsilon \downarrow 0} \int_0^{\infty} \frac1\varepsilon t^{p-1} \left[ \mu \left(\left\{ x \in \R^n \,\Bigg{|}\, \frac{1}{f(\frac{1-\varepsilon}{1+\varepsilon}x)} > t \right\} \right) -\mu \left( \left\{ x \in \R^n \,\Bigg{|}\, \frac{1}{f(x)} > t \right\} \right) \right]\, dt 
\\
=&
\frac1p \liminf_{\varepsilon \downarrow  0} \int_\Omega \frac{1}{\varepsilon} \left( \frac{1}{f^p \left(\frac{1- \varepsilon}{1+\varepsilon}x \right)}  - \frac{1}{f^p(x)} \right)\, d\mu(x).
\end{align*}
Since we see that 
\begin{align*}
\frac{1}{\varepsilon} \left( \frac{1}{f^p \left(\frac{1- \varepsilon}{1+\varepsilon}x \right)}  - \frac{1}{f^p(x)} \right)
\le
a^{-2p} \frac{1}{\varepsilon} \left( f^p(x) - f^p \left(\frac{1- \varepsilon}{1+\varepsilon}x \right) \right)
\end{align*}
and since $f^p \in \mathrm{QC}(\Omega, \mu)$, we can apply Fatou's lemma to see that 
\begin{align*}
\liminf_{\varepsilon \downarrow  0} \int_\Omega \frac{1}{\varepsilon} \left( \frac{1}{f^p \left(\frac{1- \varepsilon}{1+\varepsilon}x \right)}  - \frac{1}{f^p(x)} \right)\, d\mu(x)
\le&
\limsup_{\varepsilon \downarrow  0} \int_\Omega \frac{1}{\varepsilon} \left( \frac{1}{f^p \left(\frac{1- \varepsilon}{1+\varepsilon}x \right)}  - \frac{1}{f^p(x)} \right)\, d\mu(x)
\\
\le&
\int_\Omega \frac{1}{f(x)^{2p}} \Phi_{f^p}(x)\, d\mu(x). 
\end{align*}
Finally using $\Phi_{f^p} = p f^{p-1} \Phi_f$, we obtain \eqref{e:NegCoareaForm}. 


\end{proof}

Let ${\rm QC}^p(\Omega, \mu)$ for $p >0$ be the set of all functions $f$ on $\Omega$ such that $f^p \in {\rm QC}(\Omega, \mu) \cap L^1(\mu)$. 
Our main theorem in this section is the following. 

\begin{theorem}\label{t:FDI}
Let $\mu$ be a probability measure supported on a symmetric convex domain $\Omega \subset \R^n$. 
We assume that $\mu$ satisfies the dilation inequality for $\K_s^n(\Omega)$ for some $\kappa>0$. 
Then for any $f \in \mathrm{QC}^1(\Omega, \mu)$, it holds 
\begin{equation}\label{e:FDI}
\Ent_\mu(f) \le \frac1\kappa \int_\Omega \Phi_f \, d\mu.
\end{equation}
In addition, when $f \in C^1(\Omega)$, we obtain \eqref{e:SpeFDI3}. 
\end{theorem}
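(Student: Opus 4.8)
The plan is to derive \eqref{e:FDI} by bootstrapping the dilation inequality from the sublevel sets of $f$ up to $f$ itself, combining two ingredients that are already at our disposal: the co-area formula \eqref{e:CoareaForm} of Proposition~\ref{p:CoareaForm} (applied with $p=1$), which controls $\int_0^\infty\mu^*(\{f<t\})\,dt$ from above by $\int_\Omega\Phi_f\,d\mu$, and the dilation inequality \eqref{e:DilKappa}, which controls each $\mu^*(\{f<t\})$ from below by the entropy of an indicator. Thus what remains is the ``layer-cake'' bound
\[
\Ent_\mu(f)\ \le\ \int_0^\infty \Ent_\mu\bigl(\mathbf{1}_{\{f>t\}}\bigr)\,dt
\qquad\text{for every nonnegative }f\in L^1(\mu),
\]
where one recalls $\Ent_\mu(\mathbf{1}_A)=-\mu(A)\log\mu(A)$.

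To prove this auxiliary bound I would start from the pointwise layer-cake identity $f=\int_0^\infty\mathbf{1}_{\{f>t\}}\,dt$ and use the variational formula $\Ent_\mu(u)=\sup\{\int_\Omega ug\,d\mu:\int_\Omega e^g\,d\mu\le1\}$ (valid for nonnegative $u\in L^1(\mu)$ by $1$-homogeneity). For each bounded admissible $g$, Tonelli's theorem gives $\int_\Omega fg\,d\mu=\int_0^\infty\bigl(\int_\Omega\mathbf{1}_{\{f>t\}}g\,d\mu\bigr)dt\le\int_0^\infty\Ent_\mu(\mathbf{1}_{\{f>t\}})\,dt$, and taking the supremum over such $g$ yields the claim; equivalently, this is ``Jensen's inequality for the convex, positively $1$-homogeneous, hence sublinear, functional $\Ent_\mu$''. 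One also notes that $t\mapsto\mu(\{f>t\})$ is monotone, hence measurable, so the integrand is measurable.

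With this in hand the argument closes quickly. Since $f\in\mathrm{QC}^1(\Omega,\mu)$, the function $f$ is nonnegative, continuous, symmetric and quasi-convex, so for each $t>0$ the set $K_t\coloneqq\{x\in\R^n\mid f(x)<t\}$ is a symmetric open convex subset of $\Omega$, hence $K_t\in\K_s^n(\Omega)$ whenever it is nonempty (otherwise both sides below vanish). Applying \eqref{e:DilKappa} to $K_t$ and using $1-\mu(K_t)=\mu(\{f\ge t\})=\mu(\{f>t\})$ for a.e.\ $t$, we obtain $\mu^*(K_t)\ge-\kappa\,\mu(\{f>t\})\log\mu(\{f>t\})=\kappa\,\Ent_\mu(\mathbf{1}_{\{f>t\}})$ for a.e.\ $t>0$. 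Integrating in $t$, then chaining with the auxiliary bound and with \eqref{e:CoareaForm} for $p=1$ (legitimate because $f=f^1\in\mathrm{QC}(\Omega,\mu)$ and $f\ge0$), we get
\[
\kappa\,\Ent_\mu(f)\ \le\ \kappa\int_0^\infty\Ent_\mu\bigl(\mathbf{1}_{\{f>t\}}\bigr)\,dt\ \le\ \int_0^\infty\mu^*(K_t)\,dt\ \le\ \int_\Omega\Phi_f\,d\mu,
\]
which is \eqref{e:FDI}. If in addition $f\in C^1(\Omega)$, then $\Phi_f(x)=2\langle x,\nabla f(x)\rangle$ by \eqref{e:SpePhi_f1}, so \eqref{e:FDI} becomes exactly \eqref{e:SpeFDI3}.

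The only genuinely delicate point I anticipate is the auxiliary layer-cake bound: one must justify the interchange of $\sup_g$ with $\int_0^\infty(\cdot)\,dt$ and check that restricting to bounded test functions $g$ still recovers $\Ent_\mu(f)$. The remaining points are routine bookkeeping: $\Ent_\mu(f)$ is a priori well defined in $[0,+\infty]$ (the negative part of $f\log f$ is bounded by $e^{-1}$, and Jensen gives nonnegativity), and it is in fact finite here since $\Phi_f\le g\in L^1(\mu)$ by \eqref{e:CondQC} forces $\int_\Omega\Phi_f\,d\mu<+\infty$; measurability of $t\mapsto\mu^*(K_t)$ is exactly the remark following Proposition~\ref{p:CoareaForm}.
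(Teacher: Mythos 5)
Your proposal is correct and follows essentially the same route as the paper: you combine the $p=1$ case of the co-area formula, the dilation inequality applied to the sublevel sets $\{f<t\}$, and the layer-cake/sublinearity bound $\Ent_\mu(f)\le\int_0^\infty\Ent_\mu(\mathbf{1}_{\{f\ge t\}})\,dt$ proved via the variational (Donsker--Varadhan) formula for relative entropy with an interchange of supremum and time-integral. The only cosmetic difference is that the paper normalizes $\int_\Omega f\,d\mu=1$ and uses the form $\Ent_\mu(h)=\sup_{\varphi\in C_b(\Omega)}\left[\int h\varphi\,d\mu-\log\int e^\varphi\,d\mu\right]$, while you use the equivalent constrained form $\sup\{\int ug\,d\mu:\int e^g\,d\mu\le1\}$ extended to general $u$ by $1$-homogeneity; these are the same argument.
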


Our proof of this claim is almost same as the proof of \cite[Theorem 5.3]{Ts21}. 
For the completeness, we give the proof of Theorem \ref{t:FDI} here.

\begin{proof}
Since $f \in {\rm QC}(\Omega, \mu)$, we can apply \eqref{e:CoareaForm} with $p=1$ for sublevel sets of $f$. 
It follows from \eqref{e:DilKappa}, $\mu(\Omega)=1$ and \eqref{e:CoareaForm} that we have 
\begin{align}\label{e:m1}
- \int_0^{\infty} \mu(A_f(t)) \log\mu(A_f(t)) \,dt \le \frac{1}{\kappa}\int_{\Omega} \Phi_f\, d\mu, 
\end{align}
where we defined 
$$
A_f(t) \coloneqq \{ x \in \Omega \mid f(x) \ge t\}, \;\;\; t \ge 0.
$$
To see \eqref{e:FDI}, 
without loss of generality, we may assume that $\int_\Omega f\, d\mu=1$. In fact, 
we know $f \in L^1(\mu)$ and 
$$
\Phi_{af} = a \Phi_f, \;\;\; \Ent_\mu(af) = a \Ent_\mu(f)
$$
for any $a>0$ and $f \in {\rm QC}^1(\Omega, \mu)$ from which we can add the condition $\int_\Omega f\, d\mu=1$. 
Now, recall the dual formula of the relative entropy: for any continuous function $h\colon\R^n \to [0, \infty)$ with $\int_\Omega h \,d\mu =1$, 
it holds that 
\begin{equation}\label{e:DualEnt}
\Ent_\mu(h) =\sup_{\varphi \in C_b(\Omega)}\left[ \int_{\Omega} h\varphi \,d\mu - \log\int_{\Omega} e^{\varphi} \,d\mu\right], 
\end{equation}
where $C_b(\Omega)$ is the set of all bounded continuous functions on $\Omega$ (for instance, see \cite{S, Go22} and their proofs). 
Hence, since $\Ent_{\mu}(\mu(A)^{-1}\mathbf{1}_A) = -\log\mu(A)$ and $\int_\Omega \mu(A)^{-1}\mathbf{1}_A \,d\mu=1$ for any Borel subset $A \subset \Omega$ with $\mu(A)>0$, 
it holds that 
\begin{align*}
&-\int_0^{\infty} \mu(A_{f}(t)) \log\mu(A_{f}(t)) \,dt
\\
=& 
\int_0^\infty \mu(A_f(t)) \Ent_\mu(\mu(A_f(t))^{-1}\mathbf{1}_{A_f(t)}) \,dt 
\\
=& 
\int_0^\infty \sup_{\varphi \in C_b(\Omega)}\left[ \int_\Omega \varphi \mathbf{1}_{A_f(t)}\, d\mu - \mu(A_f(t))\log\int_\Omega e^{\varphi} \,d\mu\right] \,dt
\\
\ge &
\sup_{\varphi \in C_b(\Omega)} \left[ \int_\Omega\int_0^\infty \varphi \mathbf{1}_{A_f(t)}\, dtd\mu - \int_0^\infty \mu(A_f(t))\, dt \log\int_\Omega e^{\varphi}\, d\mu\right] 
\\
=&
\sup_{\varphi \in C_b(\Omega)} \left[\int_\Omega \varphi f \,d\mu - \log\int_\Omega e^{\varphi} \, d\mu\right] 
\\
=&
\Ent_\mu(f), 
\end{align*}
where we used $\int_0^\infty \mathbf{1}_{A_f(t)}(x) \,dt=f(x)$ for every $x \in \Omega$ and $\int_0^\infty \mu(A_f(t))\, dt = \int_\Omega f\, d\mu =1$. 
Combining this with \eqref{e:m1},  we conclude the desired assertion. 
\end{proof}

We conclude this section by giving the proof of Theorem \ref{t:FDIIntro}. 

\begin{proof}[Proof of Theorem \ref{t:FDIIntro}]
(1) immediately follows from Theorem \ref{t:FDI} and Proposition \ref{p:ClassQC2}. 

Let us show (2). 
Take an increasing sequence $\{\Omega_k\}_{k\in \mathbb{N}}$ such that $\Omega_k$ is an open, bounded symmetric convex set with $\overline{\Omega_k} \subset \Omega$ and $\lim_{k \to +\infty} \Omega_k = \Omega$. 
We can take such a sequence, for instance, by considering $\{ x \in \Omega \mid \|x\|_\Omega < 1-1/k\} \cap (k {\rm B}_2^n)$. 
Let $\mu_k$ be a normalized probability measure of $\mu$ on $\Omega_k$, namely $d\mu_k \coloneqq \mu(\Omega_k)^{-1}\mathbf{1}_{\Omega_k}\, d\mu$.  
Next, let $f$ be a function given in Theorem \ref{t:FDIIntro} (2), and for $\ell, m \in \mathbb{N}$, we set $f_{\ell, m}(x) \coloneqq \max\{\min\{f(x), \ell\}, f(0) +1/m \}$ for $x \in \Omega$.
Then since $f$ is locally Lipschitz on $\{x \in \Omega \mid f(x)>f(0)\}$, $f_{\ell,m}$ is locally Lipschitz on $\Omega$ for any $\ell,m \in \mathbb{N}$. 
In addition, $f_{\ell,m}$ is symmetric quasi-convex with $f_{\ell, m} \in L^1(\mu_k)$ for any $k,\ell,m \in \mathbb{N}$ by its construction. 
Hence applying Proposition \ref{p:ClassQC}, we have $f_{\ell, m} \in {\rm QC}^1 (\Omega_k, \mu_k)$ for any $k,\ell,m \in \mathbb{N}$. 
Moreover, $\mu_k$ satisfies the dilation inequality for $\K_s^n(\Omega_k)$ with $\kappa$. 
To see this, let $K \in \K_s^n(\Omega_k)$. 
Then by the definition, we see that 
$$
\mu_k^*(K) = \frac{1}{\mu(\Omega_k)}\mu^*(K). 
$$
Hence \eqref{e:DilKappa} yields that 
\begin{align*}
\mu_k^*(K) \ge& -\frac{\kappa}{\mu(\Omega_k)}(1-\mu(K)) \log (1-\mu(K))
\\
=&
-\frac{\kappa}{\mu(\Omega_k)}(1-\mu(\Omega_k) \mu_k(K)) \log (1-\mu(\Omega_k) \mu_k(K)). 
\end{align*}
By the elementary inequality $- \theta^{-1}(1-\theta x)\log (1-\theta x) \ge - (1-x)\log (1-x)$ for $\theta \in (0,1)$ and $x \in (0,1)$, we can obtain 
$$
\mu_k^*(K)
\ge
-\kappa(1- \mu_k(K)) \log (1-\mu_k(K)), 
$$ 
which is the desired assertion. 

Thus we can apply Theorem \ref{t:FDI} to see that 
$$
\Ent_{\mu_k}(f_{\ell,m}) \le \frac1\kappa \int_{\Omega_k} \Phi_{f_{\ell,m}} \, d\mu_k.
$$
Since $f_{\ell, m}$ is a bounded continuous function, the lower semi-continuity of the relative entropy (which follows from \eqref{e:DualEnt}) and the monotone convergence theorem as $k \to +\infty$ imply that 
$$
\Ent_\mu(f_{\ell,m}) \le \frac1\kappa \int_\Omega \Phi_{f_{\ell,m}} \, d\mu.
$$
As we mentioned in Proposition \ref{p:ClassQC}, we have 
$$
\Phi_{f_{\ell,m}}(x)
\le
\begin{cases}
2|x||\nabla f(x)| & \text{if $f(0) + 1/m < f(x) \le \ell$} \\
0  &\text{otherwise}
\end{cases}
$$
for $x \in \Omega$. 
Moreover, we see that 
$$
\begin{cases}
2|x||\nabla f(x)| & \text{if $f(0) + 1/m < f(x) \le \ell$} \\
0  &\text{otherwise}
\end{cases}
\le
\begin{cases}
2|x||\nabla f(x)| & \text{if $f(x) >f(0)$} \\
0  &\text{otherwise}
\end{cases}
$$
for $x \in \Omega$. 
In particular, we can replace the right hand side above by $2|x||\nabla f(x)|$ for $x \in \Omega$ if $f$ is locally Lipschitz on $\Omega$ since $|x||\nabla f(x)| \ge 0$ for $x \in \Omega$.  
Hence we have 
$$
\Ent_\mu(f_{\ell,m}) \le \frac1\kappa \int_{\{f>f(0)\}} |x||\nabla f(x)| \, d\mu(x), 
$$
and in particular, when $f$ is locally Lipschitz on $\Omega$, then we have 
$$
\Ent_\mu(f_{\ell,m}) \le \frac1\kappa \int_\Omega |x||\nabla f(x)| \, d\mu(x). 
$$
Thus by $\lim_{\ell,m \to +\infty} f_{\ell,m}=f$ and the lower semi-continuity of the relative entropy, we obtain \eqref{e:SpeFDI2} and \eqref{e:SpeFDI2.5}. 

When $f$ is $C^1$, we enjoy 
$$
\Phi_{f_{\ell, m}}(x) = \langle x, \nabla f(x) \rangle \mathbf{1}_{\{f(0)+1/m < f \le \ell\}}(x) \le \langle x, \nabla f(x) \rangle, \;\;\; x\in \Omega, 
$$
where we used $\langle x, \nabla f(x) \rangle \ge 0$ for any $x \in \Omega$ since $f$ is a symmetric quasi-convex function.
Using this formula directly instead of $2|x||\nabla f(x)|$, we can also obtain \eqref{e:SpeFDI3}. 
\end{proof}

%

\section{Some applications of Theorem \ref{t:FDI}}\label{s:App}

\subsection{Comparisons of the relative entropy, Wasserstein distance and variance}\label{s:Gauss}

As the first application of Theorem \ref{t:FDI}, we give comparisons of the relative entropy, Wasserstein distance and the variance in the case of the Gaussian measure. 
We denote the standard Gaussian measure on $\R^n$ by $d\gamma_n = (2\pi)^{-n/2}e^{-\frac12|x|^2}\, dx$.
To see our results, we introduce the $L^2$-Wasserstein distance, which appears in optimal transport theory. 

Let $\mu, \nu$ be probability measures on $\R^n$ with finite second moment. 
Then the $L^2$-Wasserstein distance of $\mu$ and $\nu$ is given by 
$$
W_2(\mu, \nu) \coloneqq \left\{ \inf_{\pi \in \Pi(\mu, \nu)} \int_{\R^n \times \R^n} |x-y|^2\, d\pi(x, y)\right\}^{1/2}, 
$$
where $\Pi(\mu, \nu)$ is the set of all couplings $\pi$ between $\mu$ and $\nu$, namely $\pi$ is a probability measure on $\R^n \times \R^n$ such that 
$\pi(A \times \R^n)=\mu(A)$ and $\pi(\R^n \times A)=\nu(A)$ for any Borel subset $A \subset \R^n$. 
It is known that $W_2$ is a distance function on the set of all probability measures on $\R^n$ with finite second moment. We refer a reader to \cite{Vi03, Vi09} for optimal transport theory and its related topics.

\begin{proposition}\label{p:Gauss}
Let $f \in C^1(\R^n)$ be a nonnegative, symmetric quasi-convex function with $\int_{\R^n} f\, d\gamma_n=1$ and 
\begin{equation}\label{e:TechAss5/5}
\lim_{|x|\to +\infty} |x|f(x)\gamma_n(x) = 0. 
\end{equation}
Then we have 
\begin{equation}\label{e:EntVar}
 \Ent_{\gamma_n}(f) \le \int_{\R^n} |x|^2 f(x)\, d\gamma_n(x) - n 
\end{equation}
and 
\begin{equation}\label{e:WVar}
\frac12 W_2^2(\gamma_n, \nu) \le \int_{\R^n} |x|^2 f(x)\, d\gamma_n(x) - n ,
\end{equation}
where $d\nu \coloneqq f\, d\gamma_n$.
\end{proposition}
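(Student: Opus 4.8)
The plan is to derive both inequalities from Theorem \ref{t:FDI} (equivalently Theorem \ref{t:FDIIntro}(3)) applied to the Gaussian measure $\gamma_n$, using the fact recorded in the excerpt that $\gamma_n$ satisfies the dilation inequality for $\K_s^n(\R^n)$ with $\kappa = 2$. First I would check that $f$ is an admissible test function: $f \in C^1(\R^n)$ is nonnegative, symmetric and quasi-convex, and the growth condition \eqref{e:TechAss5/5} plays the role of \eqref{e:TechAss} from Corollary \ref{c:RevShannon} — it guarantees that $f \in {\rm QC}^1(\R^n, \gamma_n)$ (so that the integrals below converge and an integration by parts has no boundary term). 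Then Theorem \ref{t:FDIIntro}(3) with $\kappa = 2$ gives immediately
\begin{equation*}
\Ent_{\gamma_n}(f) \le \int_{\R^n} \langle x, \nabla f(x)\rangle \, d\gamma_n(x).
\end{equation*}

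The key computational step is the Gaussian integration by parts: writing $d\gamma_n = (2\pi)^{-n/2} e^{-|x|^2/2}\, dx$ and using $\nabla e^{-|x|^2/2} = -x\, e^{-|x|^2/2}$, one has
\begin{equation*}
\int_{\R^n} \langle x, \nabla f(x)\rangle \, d\gamma_n(x) = \int_{\R^n} \operatorname{div}(x\, f(x)) \, d\gamma_n(x) - n \int_{\R^n} f \, d\gamma_n = \int_{\R^n} |x|^2 f(x)\, d\gamma_n(x) - n,
\end{equation*}
where the divergence-theorem manipulation is justified precisely by \eqref{e:TechAss5/5} (the flux through spheres of radius $R$ is controlled by $\sup_{|x|=R} |x| f(x) \gamma_n(x)$, which vanishes) and $\int f\, d\gamma_n = 1$. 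This yields \eqref{e:EntVar}.

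For \eqref{e:WVar} I would invoke the Talagrand transport–entropy inequality for the Gaussian measure: $\frac12 W_2^2(\gamma_n, \nu) \le \Ent_{\gamma_n}(f)$ whenever $d\nu = f\, d\gamma_n$ (this is the standard $T_2$ inequality, which holds for $\gamma_n$ with constant $1$; it follows e.g. from the Gaussian log-Sobolev inequality via the Otto–Villani or Bobkov–Götze argument, cf.\ \cite{Vi03, Vi09, BGL14}). Chaining this with \eqref{e:EntVar} gives \eqref{e:WVar} directly. The only point requiring any care is the integrability/regularity bookkeeping needed to apply Theorem \ref{t:FDI} and to discard the boundary term in the integration by parts — both are handled by \eqref{e:TechAss5/5} together with the observation that $\int |x|^2 f\, d\gamma_n < +\infty$ may be assumed (otherwise \eqref{e:EntVar} and \eqref{e:WVar} are vacuous), which in turn forces $\int \langle x, \nabla f\rangle\, d\gamma_n < +\infty$ so that $f \in {\rm QC}^1(\R^n,\gamma_n)$. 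I do not expect a serious obstacle here; the substance is entirely in Theorem \ref{t:FDI} and the classical Talagrand inequality.
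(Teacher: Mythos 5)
Your proposal matches the paper's proof essentially step for step: both invoke $\kappa=2$ for $\gamma_n$, apply Theorem \ref{t:FDIIntro}(3) to bound $\Ent_{\gamma_n}(f)$ by $\int \langle x, \nabla f\rangle\, d\gamma_n$, integrate by parts using \eqref{e:TechAss5/5}, and then chain with Gaussian Talagrand's $T_2$ inequality for \eqref{e:WVar}. The one cosmetic remark is that your appeal to $f \in {\rm QC}^1(\R^n,\gamma_n)$ is not the hypothesis Theorem \ref{t:FDIIntro}(3) actually requires (it only needs $f\in C^1$, symmetric, quasi-convex, $f \in L^1(\gamma_n)$, and handles the rest internally via truncation), but since you explicitly invoke that theorem the argument goes through regardless.
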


We remark that \eqref{e:EntVar} in particular implies that every $C^1$ symmetric quasi-convex function $f \colon \R^n \to [0, \infty)$ as in Proposition \ref{p:Gauss} satisfies
\begin{equation}\label{e:VarEst}
\int_{\R^n} |x|^2 f(x)\, d\gamma_n(x) \ge n, 
\end{equation}
and equality holds if and only if $f \equiv 1$ on $\Omega$. 
Moreover, when the deficit of \eqref{e:VarEst} is small such that  
$$
\int_{\R^n} |x|^2 f(x)\, d\gamma_n(x) - n \le \varepsilon
$$
for small enough $\epsilon>0$, then \eqref{e:EntVar} and \eqref{e:WVar} imply that $f$ is close to the constant 1 in the both senses of the relative entropy and the $L^2$-Wasserstein distance. 

We also note that we have the trivial upper bound of $W_2(\gamma_n, \nu)$ such as 
$$
\frac12 W_2^2(\gamma_n, \nu) \le \int_{\R^n} |x|^2 f(x)\, d\gamma_n(x) + n. 
$$
Thus \eqref{e:WVar} strengthen this trivial bound. 

\begin{proof}
We first note that the standard Gaussian measure satisfies \eqref{e:DilKappa} for $\K_s^n(\R^n)$ with $\kappa=2$ (see Appendix).  
Hence by Theorem \ref{t:FDI} (and Theorem \ref{t:FDIIntro}), we have 
$$
\Ent_{\gamma_n}(f) \le \int_{\R^n} \langle x, \nabla f(x) \rangle \, d\gamma_n(x).
$$
Moreover, by \eqref{e:TechAss5/5}, integrating by parts yields that 
$$
\int_{\R^n} \langle x, \nabla f(x) \rangle \, d\gamma_n(x)
=
\int_{\R^n} |x|^2 f(x)\, d\gamma_n(x) - n. 
$$
Combining these facts, we obtain \eqref{e:EntVar}. 

The second assertion \eqref{e:WVar} immediately follows from \eqref{e:EntVar} and Gaussian Talagrand's transportation inequality (see \cite{Vi03, Vi09}) 
$$
\frac{1}{2} W_2^2(\gamma_n, \nu) \le \Ent_{\gamma_n}(f). 
$$
\end{proof}

\begin{proof}[Proof of Corollary \ref{c:RevShannon}]
We set $f \coloneqq h/\gamma_n$, then we can check that $f$ satisfies the assumptions in Proposition \ref{p:Gauss}.   Hence applying Proposition \ref{p:Gauss} to $f$, we see that 
\begin{align*}
\int_{\R^n} h\log h\, dx 
\le& \int_{\R^n} h \log \gamma_n\, dx + \int_{\R^n}|x|^2h(x)\, dx - n
\\
=&
\frac12 \int_{\R^n} |x|^2h(x)\, dx - \frac n2 \log (2\pi e^2),  
\end{align*}
which is the desired assertion. 
\end{proof}

\subsection{Cram\'{e}r--Rao inequality and logarithmic Sobolev inequality}\label{s:LSI}

From Theorem \ref{t:FDI} (and Theorem \ref{t:FDIIntro}), we can obtain the following logarithmic Sobolev type inequality or Cram\'{e}r--Rao type inequality, which includes Proposition \ref{p:LSIIntro}. 

\begin{proposition}\label{p:LSI}
Let $\mu$ and $\Omega$ be as in Theorem \ref{t:FDI} and
let $f \in L^1(\mu)$ be a nonnegative, locally Lipschitz and symmetric quasi-convex function. 
Then it holds that 
\begin{equation}\label{e:LSI1}
\Ent_\mu(f) \le \frac{2}{\kappa} \left( \int_\Omega |x|^2 f(x)\, d\mu(x) \right)^{1/2} \sqrt{I_\mu(f)}
\end{equation}
and 
\begin{equation}\label{e:LSI2}
\Ent_\mu(f) \le \frac{1}{\kappa} I_\mu(f) + \frac{1}{\kappa} \int_\Omega |x|^2 f(x)\, d\mu(x).
\end{equation}
\end{proposition}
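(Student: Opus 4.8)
The plan is to derive Proposition \ref{p:LSI} from Theorem \ref{t:FDIIntro}(2), specifically from the bound \eqref{e:SpeFDI2.5}, by rewriting the right-hand side as an integral against the measure $f\,d\mu$ and applying the Cauchy--Schwarz inequality. First I would observe that since $f$ is nonnegative, locally Lipschitz and symmetric quasi-convex on $\Omega$ with $f \in L^1(\mu)$, the hypotheses of Theorem \ref{t:FDIIntro}(2) (the ``locally Lipschitz and quasi-convex on $\Omega$'' case) are met, so
\begin{equation*}
\Ent_\mu(f) \le \frac{2}{\kappa} \int_{\Omega} |x|\,|\nabla f(x)| \, d\mu(x).
\end{equation*}
Here one should be slightly careful: $f$ need not be positive, so $|\nabla f|^2/f$ may be problematic on $\{f = 0\}$; but on the set $\{f = 0\}$ one has $|\nabla f| = 0$ a.e.\ (a point of differentiability of a locally Lipschitz function at a minimum has vanishing gradient, and more robustly $|\nabla f|=0$ a.e.\ on a level set), so the integrand $|x|\,|\nabla f|$ vanishes there and we may restrict the integral to $\{f>0\}$ without loss.

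Next I would write, on $\{f>0\}$,
\begin{equation*}
|x|\,|\nabla f(x)| = \bigl(|x|\sqrt{f(x)}\bigr)\cdot\frac{|\nabla f(x)|}{\sqrt{f(x)}},
\end{equation*}
and apply the Cauchy--Schwarz inequality in $L^2(\mu)$:
\begin{equation*}
\int_{\{f>0\}} |x|\,|\nabla f|\, d\mu \le \left(\int_{\{f>0\}} |x|^2 f\, d\mu\right)^{1/2}\left(\int_{\{f>0\}} \frac{|\nabla f|^2}{f}\, d\mu\right)^{1/2} = \left(\int_{\Omega} |x|^2 f\, d\mu\right)^{1/2}\sqrt{I_\mu(f)},
\end{equation*}
the last equality again using that both integrands vanish a.e.\ on $\{f=0\}$. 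Combining with the displayed entropy bound gives \eqref{e:LSI1} exactly, which also proves Proposition \ref{p:LSIIntro}.

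For \eqref{e:LSI2}, I would simply apply the elementary AM--GM inequality $2\sqrt{ab} \le a + b$ with $a = I_\mu(f)$ and $b = \int_\Omega |x|^2 f\, d\mu$ (equivalently Young's inequality $2uv \le u^2 + v^2$) to the right-hand side of \eqref{e:LSI1}:
\begin{equation*}
\frac{2}{\kappa}\left(\int_\Omega |x|^2 f\, d\mu\right)^{1/2}\sqrt{I_\mu(f)} \le \frac{1}{\kappa}\left(I_\mu(f) + \int_\Omega |x|^2 f\, d\mu\right).
\end{equation*}
I do not anticipate a serious obstacle here; the only point requiring care is the treatment of the zero set of $f$ and the measurability/finiteness of the quantities involved (if $I_\mu(f) = +\infty$ the inequalities are trivial), which is why I would isolate the reduction to $\{f>0\}$ as the first step rather than gloss over it.
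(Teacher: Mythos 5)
Your proof is correct and follows essentially the same route as the paper's: start from the bound $\Ent_\mu(f) \le \frac{2}{\kappa}\int_\Omega |x|\,|\nabla f|\,d\mu$ supplied by Theorem \ref{t:FDIIntro}(2), obtain \eqref{e:LSI1} by Cauchy--Schwarz, and \eqref{e:LSI2} by the arithmetic--geometric mean inequality. The only difference is that you make explicit the reduction to $\{f>0\}$ and the vanishing of $|\nabla f|$ a.e.\ on $\{f=0\}$ (or the triviality of the inequality when $I_\mu(f)=+\infty$), a point the paper leaves implicit.
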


\begin{proof}
Let $f$ be a function satisfying our assumptions. 
Then by Theorem \ref{t:FDIIntro}, we see that 
$$
\Ent_\mu(f) \le \frac2\kappa \int_\Omega |x||\nabla f(x)|\, d\mu.
$$ 
Then \eqref{e:LSI1} follows by combining this with the Cauchy--Schwarz inequality, and \eqref{e:LSI2} follows from \eqref{e:LSI1} and the arithmetic-geometric mean inequality. 
\end{proof}

As we described in our introduction, \eqref{e:LSI1} is close to the logarithmic Sobolev inequality, and exactly gives 
$$
\Ent_\mu(f) \le \frac{2}{\kappa} \left( \int_\Omega |x|^2 f(x)\, d\mu(x) \right)^{1/2} I_\mu(f)
$$ 
if $I_\mu(f) \ge 1$, and 
$$
\Ent_\mu(f) \le \frac{4}{\kappa^2}  \int_\Omega |x|^2 f(x)\, d\mu(x)  I_\mu(f)
$$ 
if $\Ent_\mu(f) \ge 1$.  
We emphasize that the constant of \eqref{e:LSI1} is given only by $\kappa$ and $\int_\Omega |x|^2 f(x)\, d\mu(x)$. The logarithmic Sobolev inequality also appears in \cite{Ts21} where we need the Poincar\'{e} constant of $\mu$. 

On the other hand, \eqref{e:LSI2} is close  to the defective logarithmic Sobolev inequality.  
Here we say that a probability measure $\mu$ on $\R^n$ satisfies the defective logarithmic Sobolev inequality with constants $\rho>0$ and $\tau \ge 0$ if 
$$
\Ent_\mu(f) \le \frac{1}{2\rho} I_\mu(f) + \tau \int_\Omega f\, d\mu
$$
for any nonnegative locally Lipschitz function $f$ on $\Omega$. 
We refer a reader to \cite{BGL14} for details of the defective logarithmic Sobolev inequality. 
In our case, when $\Omega$ is bounded, since $\Omega$ is symmetric, \eqref{e:LSI2} implies that  
\begin{equation}\label{e:4/13}
\Ent_\mu(f) \le \frac{1}{\kappa} I_\mu(f) + \frac{1}{4\kappa} ({\rm diam}\,\Omega)^2 \int_\Omega f\, d\mu. 
\end{equation}
In particular, when $\Omega$ is an interval in $\R$, we also obtain the logarithmic Sobolev inequality associated with the Poincar\'{e} constant of $\mu$. Here we say that $\mu$ satisfies the Poincar\'{e} inequality with constant $C_\mu>0$ if 
$$
C_\mu \int_\Omega f^2\, d\mu \le \int_\Omega | \nabla f|^2\, d\mu
$$
for any locally Lipschitz function $f$ on $\Omega$ with $\int_\Omega f\, d\mu=0$. 

\begin{corollary}\label{c:1-dimLSI}
Let $\mu$ be a symmetric probability measure on a bounded symmetric open interval $I \subset \R$ and $f \colon I \to \R$ be a locally Lipschitz function. 
Suppose that $\mu$ satisfies the dilation inequality for $\K_s^1(I)$ with $\kappa>0$ and the Poincar\'{e} inequality with $C_\mu>0$. 
In addition, we suppose that $f$ is odd and monotone function with $f\in L^2(\mu)$. 
Then it holds that 
\begin{equation}\label{e:1-dimLSI}
\Ent_\mu(f^2) \le \frac{1}{\kappa} \left( 4 + \frac{1}{4 C_\mu} ({\rm diam}\,I)^2 \right)  \int_I |f'|^2\, d\mu.
\end{equation}
In particular, when $d\mu = e^{-\varphi(x)}\, dx$ is log-concave, then we have 
\begin{equation}\label{e:1-dimLSILC}
\Ent_\mu(f^2) \le \left( 2 + \frac{1}{8 e^{-2\varphi(0)}} ({\rm diam}\,I)^2 \right)  \int_I |f'|^2\, d\mu.  
\end{equation}
\end{corollary}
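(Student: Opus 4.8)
The plan is to apply the defective logarithmic Sobolev estimate \eqref{e:LSI2} to the function $g\coloneqq f^2$ and then feed in the Poincaré inequality. First, replacing $f$ by $-f$ if necessary (which affects neither $f^2$, nor $|f'|$, nor any hypothesis), I may assume $f$ is odd and \emph{nondecreasing}. Then $f\le 0$ on $I\cap(-\infty,0]$ and $f\ge 0$ on $I\cap[0,\infty)$, so $g=f^2$ is even (since $f$ is odd), nonincreasing on the left half of $I$ and nondecreasing on the right half; hence every sublevel set $\{g<\lambda\}$ is an interval, so $g$ is symmetric and quasi-convex. Moreover $g$ is nonnegative, locally Lipschitz on $I$ (a locally Lipschitz function on the open interval $I$ is locally bounded, and a product of such functions is again locally Lipschitz), and $g=f^2\in L^1(\mu)$ because $f\in L^2(\mu)$. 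So $g$ is admissible in Proposition \ref{p:LSI}, and \eqref{e:LSI2} gives $\Ent_\mu(f^2)\le \frac1\kappa I_\mu(f^2)+\frac1\kappa\int_I |x|^2 f^2\,d\mu$.

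Next I would simplify the two terms on the right. Since $\nabla(f^2)=2ff'$, we get $I_\mu(f^2)=\int_I |\nabla(f^2)|^2/f^2\,d\mu=4\int_I |f'|^2\,d\mu$ (the set $\{f=0\}$ is an interval on whose interior $f'=0$ a.e., so it contributes nothing). Writing $L\coloneqq \mathrm{diam}\,I$, we have $I=(-L/2,L/2)$, hence $|x|^2\le L^2/4$ on $I$ and $\int_I |x|^2 f^2\,d\mu\le \tfrac{L^2}{4}\int_I f^2\,d\mu$. Finally, since $f$ is odd and $\mu$ is symmetric, $\int_I f\,d\mu=0$, so the Poincaré inequality with constant $C_\mu$ yields $\int_I f^2\,d\mu\le C_\mu^{-1}\int_I |f'|^2\,d\mu$ (the inequality is trivial if $\int_I |f'|^2\,d\mu=+\infty$). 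Combining the three estimates gives $\Ent_\mu(f^2)\le \tfrac4\kappa\int_I|f'|^2\,d\mu+\tfrac{L^2}{4\kappa C_\mu}\int_I|f'|^2\,d\mu$, which is \eqref{e:1-dimLSI}.

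To obtain \eqref{e:1-dimLSILC} I would specialize. A symmetric log-concave probability measure on $\R$ satisfies the dilation inequality for $\K_s^1(I)$ with $\kappa=2$ (see the Appendix), so it suffices to check that $C_\mu=e^{-2\varphi(0)}$ is an admissible Poincaré constant and then plug $\kappa=2$ and $C_\mu=e^{-2\varphi(0)}$ into \eqref{e:1-dimLSI}. For the Poincaré bound: since $\varphi$ is convex and even, $\varphi(0)=\min\varphi$, so the density $\rho=e^{-\varphi}$ attains its maximum $\rho(0)=e^{-\varphi(0)}$ at $0$, which is the median of $\mu$. The hazard rate $t\mapsto\rho(t)/\mu((t,\infty))$ of a log-concave density is nondecreasing on $(0,\infty)$ and equals $2\rho(0)$ at $t=0$; since the one-dimensional isoperimetric problem for a symmetric log-concave measure is solved by half-lines, the Cheeger constant of $\mu$ is at least $2\rho(0)=2e^{-\varphi(0)}$, and Cheeger's inequality $C_\mu\ge I_\mu^2/4$ gives $C_\mu\ge e^{-2\varphi(0)}$.

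Everything up to \eqref{e:1-dimLSI} is routine once \eqref{e:LSI2} is available; the one point that needs a little care is the lower bound $C_\mu\ge e^{-2\varphi(0)}$ for a symmetric log-concave measure on a bounded interval, which I expect to be the main obstacle to write cleanly. It is classical, however: besides the isoperimetric argument above, it also follows from the Muckenhoupt–Hardy characterisation of the one-dimensional Poincaré inequality, and for bounded support the bound is comfortably non-tight, with the two-sided exponential on all of $\R$ as the borderline case.
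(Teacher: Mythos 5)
Your proof is correct and follows essentially the same route as the paper: apply the defective logarithmic Sobolev bound \eqref{e:LSI2} (equivalently the form \eqref{e:4/13}) to $g=f^2$ after checking $g$ is a nonnegative, symmetric, locally Lipschitz, quasi-convex function in $L^1(\mu)$, compute $I_\mu(f^2)=4\int_I|f'|^2\,d\mu$, and use the Poincar\'e inequality (legitimate since $\int_I f\,d\mu=0$ by symmetry and oddness) to absorb $\int_I f^2\,d\mu$. For the log-concave case the paper simply cites Bobkov \cite{B99} for the Cheeger constant $2e^{-\varphi(0)}$ and then Cheeger's inequality, whereas you rederive that isoperimetric bound from the monotone hazard rate of a log-concave density and the fact that half-lines are extremal; this is a self-contained variant of the same fact, not a genuinely different argument.
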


\begin{proof}
First we remark that $f^2$ is a nonnegative, locally Lipschitz and symmetric quasi-convex function. 
Indeed, since $|f|^2$ is decreasing on $I \cap (-\infty, 0]$ and increasing on $I \cap [0, \infty)$ by the monotonicity of $f$, $|f|^2$ is quasi-convex. 
Moreover $|f|^2$ is locally Lipschitz and symmetric since $f$ is locally Lipschitz and odd. 
Applying Proposition \ref{p:LSI}, in particular \eqref{e:4/13}, to $f^2$, we obtain 
$$
\Ent_\mu(f^2) \le \frac{4}{\kappa} \int_I |f'|^2\, d\mu + \frac{1}{4\kappa} ({\rm diam}\,I)^2 \int_I f^2\, d\mu. 
$$
On the other hand, since $f$ is odd and $\mu$ is symmetric from which we have $\int_I f \, d\mu=0$, we can apply the Poincar\'{e} inequality to $f$ to see that 
$$
C_\mu \int_I f^2\, d\mu \le \int_I |f'|^2\, d\mu.
$$
Therefore we enjoy 
$$
\Ent_\mu(f^2) \le \frac{4}{\kappa} \int_I |f'|^2\, d\mu + \frac{1}{4\kappa C_\mu} ({\rm diam}\,I)^2\int_I |f'|^2\, d\mu, 
$$
which implies \eqref{e:1-dimLSI}. 

For the second assertion, we employ the result by Bobkov \cite{B99} where it is shown that every log-concave probability measure $\mu=e^{-\varphi}\, dx$ on $\R$ satisfies Cheeger's isoperimetric inequality with the constant $2e^{-\varphi(m)}$, where $m \in I$ is the median of $\mu$. 
In our case, since $\mu$ is symmetric, we can take $m$ as $0$. 
Hence, Cheeger's inequality \cite{Ch70} (see also \cite[Theorem 14.1.6]{BGVV14}) implies that $C_\mu \ge e^{-2\varphi(0)}$. 
\eqref{e:1-dimLSILC} follows from combining  \eqref{e:1-dimLSI} with the bound of the Poincar\'{e} constant and $\kappa=2$. The latter follows from every symmetric log-concave probability measure satisfying \eqref{e:DilKappa} with $\kappa=2$ (see Appendix). 
\end{proof}

\subsection{Kahane--Khintchine inequalities and deviation inequalities}\label{s:KK}

In this subsection, we consider deviation inequalities as described in Corollary \ref{c:LDIntro}. 
To see this, we give the following moment estimate for positive exponent which is a generalization of the comparison result of moments for log-concave probability measures, firstly discussed by Borell \cite{Bor74} (see also \cite{MS, GNT14, Mi23}). 

\begin{proposition}\label{p:Moment}
Let $\mu$ and $\Omega$ be as in Theorem \ref{t:FDI} and $p_0 >1$. 
If a nonnegative function $f$ on $\Omega$ satisfies 
$$
f \in \bigcap_{1 \le p \le p_0} {\rm QC}^p(\Omega, \mu),
$$ 
it holds that 
\begin{equation}\label{e:Moment}
\| f \|_{L^q(\mu)} \le \left( \frac{q}{p} \right)^{\frac1\kappa \| f^{-1} \Phi_f \|_{L^\infty(\{f>0\})}} \| f \|_{L^p(\mu)}
\end{equation}
for any $1\le p \le q <p_0$, where 
$$
\| f^{-1} \Phi_f \|_{L^\infty(\{f>0\})} \coloneqq {\rm ess\,sup} \{ f^{-1}(x) \Phi_f(x) \mid x \in \Omega, f(x)>0\}.
$$ 
In particular, if $f$ is in $C^1(\Omega)$ and satisfies 
\begin{equation}\label{e:GoodAss}
f \in \bigcap_{p \ge 1} {\rm QC}^p(\Omega, \mu), 
\end{equation}
then we have 
\begin{equation}\label{e:MomentSm}
\| f \|_{L^q(\mu)} \le \left( \frac{q}{p} \right)^{\frac2\kappa \| \langle \cdot, \nabla \log f(\cdot) \rangle \|_{L^\infty(\{f>0\})}} \| f \|_{L^p(\mu)}
\end{equation}
for any $1\le p \le q$. 
\end{proposition}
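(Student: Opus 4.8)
The plan is to run the classical moment-comparison argument (as for log-concave measures, going back to Borell) but with the functional dilation inequality \eqref{e:FDI} playing the role of the Pr\'ekopa--Leindler type input. Fix $f$ as in the statement. For each $p \in [1,p_0)$, the hypothesis $f \in {\rm QC}^p(\Omega,\mu)$ says exactly that $f^p \in {\rm QC}(\Omega,\mu) \cap L^1(\mu) = {\rm QC}^1(\Omega,\mu)$, so Theorem \ref{t:FDI} applies to $f^p$ and gives $\Ent_\mu(f^p) \le \frac1\kappa \int_\Omega \Phi_{f^p}\, d\mu$. Using the identity $\Phi_{f^p} = p f^{p-1}\Phi_f$ (established in the proof of Proposition \ref{p:CoareaForm}), the fact that $\Phi_{f^p}$ vanishes where $f$ does, and the pointwise bound $f^{p-1}\Phi_f = f^p \cdot f^{-1}\Phi_f \le \| f^{-1}\Phi_f \|_{L^\infty(\{f>0\})}\, f^p$ on $\{f>0\}$, I would obtain
$$
\Ent_\mu(f^p) \le \frac{p}{\kappa}\, \| f^{-1}\Phi_f \|_{L^\infty(\{f>0\})} \int_\Omega f^p\, d\mu .
$$

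Next I would set $H(p) \coloneqq \int_\Omega f^p\, d\mu = \| f \|_{L^p(\mu)}^p$ and $C \coloneqq \frac1\kappa \| f^{-1}\Phi_f \|_{L^\infty(\{f>0\})}$ (the inequality being trivial if $C = +\infty$), and differentiate $H$ in $p$. Differentiation under the integral sign on $(1,p_0)$ --- dominating $(f^{p+h}-f^p)/h$ by a multiple of $1 + f^{p+\delta}$ with $p+\delta < p_0$ --- gives $H'(p) = \int_\Omega f^p \log f\, d\mu$, hence $\int_\Omega f^p \log f^p\, d\mu = pH'(p)$ and therefore $\Ent_\mu(f^p) = pH'(p) - H(p)\log H(p)$. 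Assuming $f \not\equiv 0$ so that $H(p) > 0$, the displayed bound rewrites as $pH'(p) - H(p)\log H(p) \le pC\, H(p)$, i.e., after dividing by $pH(p)$,
$$
\frac{d}{dp}\!\left( \frac{\log H(p)}{p} \right) = \frac{pH'(p)/H(p) - \log H(p)}{p^2} \le \frac{C}{p}.
$$
Since $\frac{\log H(p)}{p} = \log \| f \|_{L^p(\mu)}$, integrating this differential inequality over $[p,q] \subset [1,p_0)$ gives $\log \| f \|_{L^q(\mu)} - \log \| f \|_{L^p(\mu)} \le C \log (q/p)$, which is \eqref{e:Moment} after exponentiating.

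For the second assertion, when $f \in C^1(\Omega)$ one has $\Phi_f = 2\langle \cdot, \nabla f \rangle$ by \eqref{e:SpePhi_f1}, so on $\{f>0\}$, $f^{-1}\Phi_f = 2\langle \cdot, \nabla \log f \rangle$ and hence $C = \frac2\kappa \| \langle \cdot, \nabla \log f(\cdot) \rangle \|_{L^\infty(\{f>0\})}$; assumption \eqref{e:GoodAss} then permits $p_0$ to be taken arbitrarily large, giving \eqref{e:MomentSm} for all $1 \le p \le q$. I expect the only real technical point to be the regularity of $p \mapsto H(p)$ needed to integrate the differential inequality: I would record that $p \mapsto \log \| f \|_{L^p(\mu)}$ is convex by H\"older's inequality, hence locally Lipschitz (so absolutely continuous) on $(1,p_0)$ and continuous up to $p=1$, which legitimizes passing from the a.e.\ derivative bound to the integrated one; everything else is bookkeeping.
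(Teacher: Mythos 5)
Your proof is correct and takes essentially the same route as the paper: you derive the entropy bound $\Ent_\mu(f^p) \le \frac{p}{\kappa}\|f^{-1}\Phi_f\|_{L^\infty(\{f>0\})}\int f^p\,d\mu$ from Theorem \ref{t:FDI} applied to $f^p$ (using $\Phi_{f^p}=pf^{p-1}\Phi_f$), recognize that this is precisely the differential inequality $\frac{d}{dp}\log\|f\|_{L^p(\mu)}\le C/p$, and integrate. The paper does the same thing with $\Lambda(t)=\frac1t\log\int f^t\,d\mu$ in place of your $\log H(p)/p$ (the identical quantity); your extra care about differentiation under the integral and the regularity of $p\mapsto\log\|f\|_{L^p}$ is sound but supplementary.
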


\begin{proof} 
Set 
$$
\Lambda(t) \coloneqq \frac{1}{t} \log \int_{\Omega} f^t\, d\mu, 
$$
then we see that 
\begin{align*}
\Lambda'(t) =& - \frac{1}{t^2} \log \int_{\Omega} f^t\, d\mu + \frac{1}{t} \frac{ \int_{\Omega} f^t\log f\, d\mu}{\int_{\Omega} f^t\, d\mu}
\\
=&
\frac{1}{t^2} \frac{1}{ \int_{\Omega} f^t\, d\mu} \Ent_{\mu}(f^t). 
\end{align*}
Since $f \in {\rm QC}^t(\Omega, \mu)$ for any $1\le t < p_0$, it follows from Theorem \ref{t:FDI} that 
\begin{align*}
\Lambda'(t) 
\le& \frac{1}{t^2} \frac{1}{\kappa} \frac{1}{ \int_{\Omega} f^t\, d\mu} \int_{\Omega} \Phi_{f^t}\, d\mu
\\
=&
\frac{1}{t} \frac{1}{\kappa} \frac{1}{ \int_{\Omega} f^t\, d\mu} \int_{\{ x \in \Omega \mid f(x)>0\}} f^{t-1} \Phi_f\, d\mu
\\
\le& \frac{1}{\kappa t} \| f^{-1}\Phi_f \|_{L^\infty(\{f>0\})}. 
\end{align*}
Here we used the fact that $\Phi_{f^t}(x)=0$ for any $t>0$ if $f(x)=0$ at $x \in \Omega$, which follows from $f^t \in {\rm QC}(\Omega, \mu)$. 
Hence integrating the above inequality from $p$ to $q$ with $1 \le p \le q <p_0$ yields the desired assertion \eqref{e:Moment}. 

\eqref{e:MomentSm} also follows by the same proof above and by \eqref{e:SpePhi_f1}.  
\end{proof}

For instance, when $\mu$ is log-concave, the gauge function $\| \cdot \|_K$ for $K \in \K_s^n(\R^n)$ satisfies \eqref{e:GoodAss}, and we can check $\| (\|\cdot\|_K)^{-1} \Phi_{\|\cdot\|_K} \|_{L^\infty(\{\|\cdot\|_K>0\})} =2$. 
Hence, \eqref{e:Moment} yields 
$$
\| \|\cdot\|_K \|_{L^q(\mu)} \le \left( \frac{q}{p} \right)^2 \| \|\cdot\|_K \|_{L^p(\mu)}
$$
for any $1 \le p \le q$ since all log-concave probability measures satisfy the dilation inequality with $\kappa=1$. In particular, when $\mu$ is symmetric on $\R$, since we can take $\kappa=2$ as we see in Appendix, we also obtain 
$$
\| |\cdot| \|_{L^q(\mu)} \le \frac{q}{p} \| |\cdot| \|_{L^p(\mu)}
$$
for any $1 \le p \le q$. 
It is known that the order of $q/p$ above is optimal (for instance, see \cite{GNT14}). 
On the other hand, it is known that all log-concave probability measures on $\R^n$ satisfy 
$$
\| \|\cdot\|_K \|_{L^q(\mu)} \le C \frac{q}{p} \| \|\cdot\|_K \|_{L^p(\mu)}
$$
for any $1 \le p \le q$, where $C>0$ is an absolute constant (see \cite{MS, BGVV14, GNT14}). 
We remark that the similar inequality for general functions has already appeared in \cite{Ts21} (see also \cite{B07, Fr09}). 
More precisely, in \cite{Ts21}, we need the Remez function to construct the moment comparison like \eqref{e:Moment}. 
For $s\ge 1$, we define $u_f(s) \ge 1$ by the best constant $C\ge 1$ such that 
$$
\{ x \in \Omega \mid f(x) \le \lambda\}_{1-1/s} \subset \{x \in \Omega \mid f(x) \le \lambda u_f(s)\}, \;\;\; \forall \lambda >0.
$$
We call the function $u_f \colon [1, \infty) \to [1, \infty)$ the Remez function of $f$, and set 
$$
u_f'(1) \coloneqq \limsup_{s\downarrow 1} \frac{u(s)-1}{s-1}. 
$$
Then it follows from \cite[Corollary 5.7]{Ts21} that 
$$
\| f \|_{L^q(\mu)} \le \left( \frac{q}{p} \right)^{u_f'(1)} \| f \|_{L^p(\mu)}
$$
for any nonnegative integrable function $f$ with $u_f'(1)<+\infty$ and for any $1 \le p \le q$. 
We remark that $\| f^{-1} \Phi_f \|_{L^\infty(\{f>0\})} \le u_f'(1)$ holds when $f$ is a continuous and symmetric quasi-convex function. 
Indeed by the definition of $u_f$, we have 
$$
f(x) \le f\left(\frac{1-\varepsilon}{1+\varepsilon} x \right) u_f \left( \frac{1}{1-\varepsilon} \right), \;\;\; \forall \varepsilon (0, 1), \forall x \in \Omega. 
$$
Hence it holds that $\Phi_f(x) \le f(x)u_f'(1)$ for $x \in \Omega$, from which we obtain $\| f^{-1} \Phi_f \|_{L^\infty(\{f>0\})} \le u_f'(1)$.

As a corollary of Proposition \ref{p:Moment}, we give a tail estimate of a measure. 
To see this, we introduce the Orlicz norm $\| \cdot\|_{\psi_\alpha}$ for $\alpha \ge 1$. 
Given any $\alpha \ge 1$ and Borel function $f \colon \Omega \to \R$, we set 
$$
\| f \|_{\psi_\alpha}
\coloneqq 
\inf \left\{
t>0 \,\Bigg{|}\, \int_\Omega \exp \left( \left(\frac{|f(x)|}{t} \right)^\alpha \right)\, d\mu \le 2
\right\}. 
$$
It is known that the Orlicz norm $\| \cdot\|_{\psi_\alpha}$ is also given by $L^p$-norms for $p\ge \alpha$ (see \cite[Lemma 2.4.2]{BGVV14}). 

\begin{lemma}\label{l:Orlicz}
Let $\alpha \ge1$ and $f \colon \Omega \to \R$ be a Borel function. 
Then 
$$
\| f \|_{\psi_\alpha} \simeq \sup_{p \ge \alpha} \frac{\| f \|_{L^p(\mu)}}{p^{1/\alpha}}. 
$$
Here $A \simeq B$ means that there exist some absolute constants $c, C>0$ such that $cB \le A \le C B$. 
\end{lemma}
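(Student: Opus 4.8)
The plan is to prove the two inequalities $\|f\|_{\psi_\alpha} \lesssim \sup_{p\ge\alpha} \|f\|_{L^p(\mu)}/p^{1/\alpha}$ and $\sup_{p\ge\alpha} \|f\|_{L^p(\mu)}/p^{1/\alpha} \lesssim \|f\|_{\psi_\alpha}$ separately, both by elementary manipulations with the power-series expansion of the exponential and Stirling's formula; no input beyond the excerpt is needed (indeed this is a standard fact, e.g.\ \cite[Lemma 2.4.2]{BGVV14}, but I give a self-contained argument). Write $M \coloneqq \sup_{p\ge\alpha}\|f\|_{L^p(\mu)}/p^{1/\alpha}$; we may assume $0<M<\infty$, the degenerate cases being trivial.

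For the bound $\|f\|_{\psi_\alpha}\le C M$, fix $t>0$ and expand
\begin{align*}
\int_\Omega \exp\left(\left(\frac{|f|}{t}\right)^\alpha\right)d\mu
= \sum_{k=0}^\infty \frac{1}{k!\, t^{\alpha k}}\int_\Omega |f|^{\alpha k}\, d\mu.
\end{align*}
For $k\ge 1$ we have $\alpha k\ge\alpha$, so $\int_\Omega |f|^{\alpha k}\,d\mu = \|f\|_{L^{\alpha k}(\mu)}^{\alpha k}\le M^{\alpha k}(\alpha k)^{k}$; the $k=0$ term is $1$. Hence the series is bounded by $1+\sum_{k\ge1} (M/t)^{\alpha k}(\alpha k)^k/k!$, and using $k!\ge (k/e)^k$ the general term is at most $\big((M/t)^\alpha \alpha e\big)^k$. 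Choosing $t = M(2\alpha e)^{1/\alpha}$ makes the ratio $\le 1/2$, so the series is $\le 1 + \sum_{k\ge1} 2^{-k} = 2$, giving $\|f\|_{\psi_\alpha}\le M(2\alpha e)^{1/\alpha}\le 2e\,M$ since $\alpha\ge1$.

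For the reverse bound $M\le C\|f\|_{\psi_\alpha}$, set $t\coloneqq \|f\|_{\psi_\alpha}$ (assume $t<\infty$; otherwise nothing to prove), so $\int_\Omega \exp((|f|/t)^\alpha)\,d\mu\le 2$. Fix $p\ge\alpha$ and pick the integer $k$ with $\alpha k\le p<\alpha(k+1)$; keeping only the $k$-th term of the exponential series gives $\frac{1}{k!\,t^{\alpha k}}\int_\Omega |f|^{\alpha k}\,d\mu\le 2$, i.e.\ $\|f\|_{L^{\alpha k}(\mu)}\le t\,(2\,k!)^{1/(\alpha k)}\le C t\,k^{1/\alpha}$ by Stirling. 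Since $p<\alpha(k+1)\le 2\alpha k$ (as $k\ge1$; the small-$p$ range $p\in[\alpha,2\alpha)$ is handled directly by $\|f\|_{L^p}\le\|f\|_{L^{2\alpha}}\le Ct\alpha^{1/\alpha}$ via the $k=2$ term), we get $\|f\|_{L^p(\mu)}\le C t\, k^{1/\alpha}\le C' t\, p^{1/\alpha}$, and dividing by $p^{1/\alpha}$ and taking the supremum yields $M\le C'\|f\|_{\psi_\alpha}$.

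The only mildly delicate point — the ``main obstacle,'' such as it is — is the bookkeeping in the second half: translating a bound on the single moment $\|f\|_{L^{\alpha k}(\mu)}$ into a bound on $\|f\|_{L^{p}(\mu)}$ for all real $p\ge\alpha$ (via monotonicity of $L^p$-norms in $p$ for a probability measure, together with the comparison $p\asymp \alpha k$), and checking the edge range $p\in[\alpha,2\alpha)$ so that the implicit constant is genuinely absolute and does not secretly depend on $\alpha$. Everything else is the standard exponential-series / Stirling estimate.
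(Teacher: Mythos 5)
Your proof is a self-contained argument for a fact the paper does not prove but simply cites to \cite[Lemma 2.4.2]{BGVV14}, so what follows compares you against a correct argument rather than against the paper itself. Your first direction is fine: expanding the exponential, bounding $\int_\Omega|f|^{\alpha k}\,d\mu \le M^{\alpha k}(\alpha k)^k$ for $k\ge1$, and applying $k!\ge(k/e)^k$ produces a convergent geometric series, and the resulting constant $(2\alpha e)^{1/\alpha}$ is indeed $\le 2e$ uniformly in $\alpha\ge1$ since $\alpha\mapsto(2\alpha e)^{1/\alpha}$ is decreasing on $[1,\infty)$.

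In the reverse direction there is a real slip. You take $k=\lfloor p/\alpha\rfloor$, so $\alpha k\le p$, and the single-term bound gives $\|f\|_{L^{\alpha k}(\mu)}\le Ctk^{1/\alpha}$; but since $\mu$ is a probability measure, $L^q$-norms are \emph{increasing} in $q$, so $\alpha k\le p$ only yields $\|f\|_{L^{\alpha k}}\le\|f\|_{L^{p}}$, which runs the wrong way. The remark $p<2\alpha k$ does not salvage the step as written either: it gives $\|f\|_{L^p}\le\|f\|_{L^{2\alpha k}}$, but you have only bounded $\|f\|_{L^{\alpha k}}$, not $\|f\|_{L^{2\alpha k}}$. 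The fix is immediate: choose instead $m=\lceil p/\alpha\rceil$, so $\alpha m\ge p$ and $m\le p/\alpha+1\le2p/\alpha$ (using $p\ge\alpha$); then $\|f\|_{L^p}\le\|f\|_{L^{\alpha m}}\le Ctm^{1/\alpha}\le C'tp^{1/\alpha}$, where the last step uses $(2/\alpha)^{1/\alpha}\le2$ for $\alpha\ge1$. Equivalently, keep your $k$ but apply the one-term bound with index $2k$ to control $\|f\|_{L^{2\alpha k}}$. With this correction the constants are genuinely absolute and the argument is complete; the separate handling of $p\in[\alpha,2\alpha)$ then becomes unnecessary (and, incidentally, the constant in that parenthetical should be $2^{1/\alpha}$ rather than $\alpha^{1/\alpha}$ — harmless, but a sign that the $k$ versus $p/\alpha$ bookkeeping needed one more pass).
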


By Proposition \ref{p:Moment}, we obtain the following estimate of some Orlicz norm and the deviation inequality. 

\begin{corollary}\label{c:Conc}
Let $\mu$ and $\Omega$ be as in Theorem \ref{t:FDI} and let $f$ be a nonnegative function on $\Omega$ satisfying \eqref{e:GoodAss}. 
We set 
$$
\alpha \coloneqq \frac{\kappa}{ \| f^{-1} \Phi_f\|_{L^\infty(\{ f>0\})}}.
$$ 
If $1 \le \alpha < +\infty$, then it holds that 
\begin{equation}\label{e:Orlicz}
\| f \|_{\psi_\alpha} \simeq \alpha^{-\frac{1}{\alpha}} \| f \|_{L^\alpha(\mu)}. 
\end{equation}
In addition, we have 
\begin{equation}\label{e:Conc}
\mu( \{ x \in \Omega \mid f(x) \ge Ct \alpha^{-1/\alpha}\| f \|_{L^\alpha(\mu)} \}) \le 2 \exp (-t^\alpha), \;\;\; \forall t \ge 1, 
\end{equation}
where $C>0$ is an absolute constant. 
\end{corollary}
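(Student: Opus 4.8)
The plan is to push the moment comparison of Proposition \ref{p:Moment} through the $L^p$-characterisation of the Orlicz norm in Lemma \ref{l:Orlicz}, and then convert the resulting $\psi_\alpha$-bound into a tail estimate by Markov's inequality. Throughout I would assume $\|f\|_{L^\alpha(\mu)} \in (0, +\infty)$: finiteness is contained in \eqref{e:GoodAss} (which forces $f \in L^p(\mu)$ for every $p \ge 1$), while $\|f\|_{L^\alpha(\mu)} = 0$ would mean $f = 0$ $\mu$-a.e., incompatible with the hypothesis $\alpha < +\infty$.

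First I would note the identity $\frac{1}{\kappa}\|f^{-1}\Phi_f\|_{L^\infty(\{f>0\})} = \tfrac1\alpha$, so that Proposition \ref{p:Moment}, applied for each $p_0 > 1$ and hence for all exponents, becomes
\begin{equation*}
\|f\|_{L^q(\mu)} \le \left(\frac{q}{p}\right)^{1/\alpha}\|f\|_{L^p(\mu)}, \qquad 1 \le p \le q.
\end{equation*}
Taking $p = \alpha$ and dividing by $q^{1/\alpha}$ yields $q^{-1/\alpha}\|f\|_{L^q(\mu)} \le \alpha^{-1/\alpha}\|f\|_{L^\alpha(\mu)}$ for every $q \ge \alpha$, and the choice $q = \alpha$ shows this is sharp for the supremum. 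Hence
\begin{equation*}
\sup_{p \ge \alpha}\frac{\|f\|_{L^p(\mu)}}{p^{1/\alpha}} = \alpha^{-1/\alpha}\|f\|_{L^\alpha(\mu)},
\end{equation*}
and Lemma \ref{l:Orlicz} immediately upgrades this equality to \eqref{e:Orlicz}. In particular the comparison constants are absolute, so there is an absolute constant $C > 0$ with $\|f\|_{\psi_\alpha} \le C\,\alpha^{-1/\alpha}\|f\|_{L^\alpha(\mu)}$; combined with the lower bound this also gives $\|f\|_{\psi_\alpha} \in (0,+\infty)$.

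For \eqref{e:Conc} I would invoke the Luxemburg-type property of the norm: picking $t_k \downarrow \|f\|_{\psi_\alpha}$ with $\int_\Omega \exp((f/t_k)^\alpha)\,d\mu \le 2$ and applying the monotone convergence theorem gives $\int_\Omega \exp((f/\|f\|_{\psi_\alpha})^\alpha)\,d\mu \le 2$. Markov's inequality for the nonnegative function $\exp((f/\|f\|_{\psi_\alpha})^\alpha)$ then produces, for every $s > 0$,
\begin{equation*}
\mu(\{x \in \Omega \mid f(x) \ge s\}) \le 2\exp\left(-\left(\frac{s}{\|f\|_{\psi_\alpha}}\right)^{\alpha}\right).
\end{equation*}
Setting $s = Ct\,\alpha^{-1/\alpha}\|f\|_{L^\alpha(\mu)}$ with the constant $C$ from the previous step forces $s/\|f\|_{\psi_\alpha} \ge t$, whence $\mu(\{f \ge s\}) \le 2\exp(-t^\alpha)$ for all $t \ge 1$ (in fact all $t > 0$), which is exactly \eqref{e:Conc}.

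I do not expect a genuine obstacle here; all the substance sits in Proposition \ref{p:Moment}, whose own proof already used Theorem \ref{t:FDI}. The only points needing mild care are the bookkeeping of absolute constants — ensuring the constant produced in \eqref{e:Orlicz} is the one reused as $C$ in \eqref{e:Conc} — and the routine monotone-convergence argument guaranteeing that the infimum defining $\|\cdot\|_{\psi_\alpha}$ is attained.
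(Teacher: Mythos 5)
Your proposal is correct and matches the paper's own (very terse) proof: establish \eqref{e:Orlicz} by combining Proposition \ref{p:Moment} with Lemma \ref{l:Orlicz}, then derive \eqref{e:Conc} by Markov's inequality applied through the Luxemburg norm. You have simply supplied the routine details that the paper omits; there is no gap or deviation.
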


\begin{proof}
\eqref{e:Orlicz} is a direct consequence of Proposition \ref{p:Moment} and Lemma \ref{l:Orlicz}. 
\eqref{e:Conc} also follows from \eqref{e:Orlicz} and Markov's inequality. 
\end{proof}

\begin{proof}[Proof of Corollary \ref{c:LDIntro} (1)]
Let $f$ be a function given in Corollary \ref{c:LDIntro} (1). 
Then for any $t \ge 1$, $f^t$ is a positive $C^1$ symmetric quasi-convex function on $\Omega$ with $f^t \in L^1(\mu)$. 
Hence by Theorem \ref{t:FDIIntro}, we have 
$$
\Ent_\mu(f^t) \le \frac{2t}{\kappa} \int_\Omega f^{t-1} \langle x, \nabla f(x) \rangle \, d\mu(x). 
$$
Applying this inequality instead of Theorem \ref{t:FDI} in the proof of Proposition \ref{p:Moment}, we obtain 
$$
\| f \|_{L^q(\mu)} \le \left( \frac{q}{p} \right)^{\frac2\kappa \| \langle x, \nabla \log f \rangle \|_{L^\infty}} \| f \|_{L^p(\mu)}
$$
for any $1 \le p \le q$. 
Finally by the same  argument as in the proof of Corollary \ref{c:Conc}, we conclude the desired assertion. 
\end{proof}

Next, we consider the Kahane--Khintchine inequality for negative exponent via Proposition \ref{p:CoareaForm}. 

\begin{proposition}\label{p:NegMom}
Let $\mu$ and $\Omega$ be as in Theorem \ref{t:FDI}. 
Let $f$ be a positive, continuous and symmetric quasi-convex function with 
$$
0 < \beta \coloneqq \frac{1}{\kappa \log 2} \| f^{-1} \Phi_f \|_{L^\infty} < +\infty.
$$
Suppose that $f$ also satisfies $f^p \in {\rm QC}(\Omega, \mu)$ and $f^{-p} \in L^1(\mu)$ for some $0<p <1/\beta$. 
Then it holds that 
$$
{\rm med}(f) \le \left(\frac{e}{ 1- \beta p}\right)^\beta \|f\|_{L^{-p}(\mu)}.  
$$
\end{proposition}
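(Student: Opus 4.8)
The plan is to turn the negative co-area formula \eqref{e:NegCoareaForm} into an \emph{upper} bound for $\int_\Omega f^{-p}\,d\mu$ in terms of ${\rm med}(f)$, and then conclude by an elementary inequality. Set $m\coloneqq{\rm med}(f)$ and, for $t>0$, $G(t)\coloneqq\mu(\{1/f>t\})=\mu(\{f<1/t\})$; since $f$ is positive, continuous, symmetric and quasi-convex, each sublevel set $\{f<1/t\}$ is a (possibly empty) member of $\K_s^n(\Omega)$, the function $G$ is non-increasing, and the layer-cake formula gives $\int_\Omega f^{-p}\,d\mu=p\int_0^\infty t^{p-1}G(t)\,dt<\infty$ (finiteness by the hypothesis $f^{-p}\in L^1(\mu)$). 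The only input I need about the Lévy mean is that $\{f<1/t\}\subseteq\{f<m\}$ for $t\ge 1/m$, so that $G(t)\le\mu(\{f<m\})\le\tfrac12$ there.

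First I would apply \eqref{e:NegCoareaForm} (valid because $f^p\in\mathrm{QC}(\Omega,\mu)$) and estimate its two sides separately. The right-hand side is at most $\|f^{-1}\Phi_f\|_{L^\infty}\int_\Omega f^{-p}\,d\mu=\kappa\beta(\log 2)\,p\int_0^\infty t^{p-1}G(t)\,dt$. For the left-hand side, the dilation inequality \eqref{e:DilKappa} applied to each convex set $\{1/f>t\}$ gives, with $\eta(u)\coloneqq-(1-u)\log(1-u)$, the pointwise-in-$t$ bound $\mu^*(\{1/f>t\})\ge\kappa\,\eta(G(t))$ (the empty case being trivial). Combining and cancelling $\kappa$, I obtain
\[
\int_0^\infty t^{p-1}\eta(G(t))\,dt\le\beta(\log 2)\int_0^\infty t^{p-1}G(t)\,dt.
\]

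The crux—and where I expect the real work to lie—is extracting a useful consequence from this: the trivial bound $\eta(u)\le u$ renders it vacuous, so instead I would use the sharper fact that $\eta(u)\ge(\log 2)\,u$ on $[0,\tfrac12]$ (because $u\mapsto\eta(u)/u$ is non-increasing on $(0,1)$ and equals $\log 2$ at $u=\tfrac12$). Splitting both integrals at $t=1/m$, applying this on $[1/m,\infty)$ where $G\le\tfrac12$, dropping the nonnegative piece $\int_0^{1/m}t^{p-1}\eta(G)\,dt$ on the left, and bounding $\int_0^{1/m}t^{p-1}G\,dt\le\int_0^{1/m}t^{p-1}\,dt=m^{-p}/p$ on the right, one is left with a linear inequality in $\int_{1/m}^\infty t^{p-1}G\,dt$; using $\beta p<1$ this rearranges to $\int_{1/m}^\infty t^{p-1}G\,dt\le\frac{\beta}{1-\beta p}m^{-p}$, whence $\int_\Omega f^{-p}\,d\mu\le m^{-p}+\frac{\beta p}{1-\beta p}m^{-p}=\frac{m^{-p}}{1-\beta p}$.

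Finally I would close the argument with the elementary inequality $\frac{1}{1-x}\le\bigl(\frac{e}{1-x}\bigr)^{x}$ for $x\in(0,1)$ (equivalently $-(1-x)\log(1-x)\le x$, which holds since $1-v+v\log v$ vanishes at $v=1$ and is decreasing on $(0,1)$): with $x=\beta p$ this yields $\|f\|_{L^{-p}(\mu)}^{-p}=\int_\Omega f^{-p}\,d\mu\le\bigl(\frac{e}{1-\beta p}\bigr)^{\beta p}m^{-p}$, and raising both sides to the power $-1/p$ gives ${\rm med}(f)\le\bigl(\frac{e}{1-\beta p}\bigr)^{\beta}\|f\|_{L^{-p}(\mu)}$, as claimed. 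Minor technical points to attend to are the Borel measurability of $t\mapsto\mu^*(\{1/f>t\})$ (as in the remark following Proposition \ref{p:CoareaForm}) and the degenerate case $\mu(\{f<m\})=0$, in which $G\equiv 0$ on $[1/m,\infty)$ and the estimate is immediate.
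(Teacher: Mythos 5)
Your proof is correct and follows essentially the same route as the paper's: apply the negative co-area formula together with the dilation inequality, bound $\eta(u)=-(1-u)\log(1-u)$ from below by $(\log 2)u$ on the range where $G\le\tfrac12$ (the paper uses the equivalent global bound $\eta(\theta)\ge\log 2\,\min\{\theta,1-\theta\}$), split the $t$-integral at $1/\mathrm{med}(f)$, rearrange to get $\int f^{-p}\,d\mu\le(1-\beta p)^{-1}\mathrm{med}(f)^{-p}$, and finish with the same elementary inequality $-(1-u)\log(1-u)\le u$. One small slip: the display after ``cancelling $\kappa$'' should carry a factor $p$ on the right-hand side, but your subsequent arithmetic uses the correct version, so the conclusion is unaffected.
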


\begin{proof}
We may suppose that ${\rm med}(f)>0$, otherwise our assertion is obvious. 

We firstly note that we have 
$$
- (1-\theta) \log (1-\theta) \ge \log 2 \min\{\theta, 1-\theta\}, \;\;\; \forall \theta \in [0,1].
$$
Hence, by \eqref{e:DilKappa} and \eqref{e:NegCoareaForm}, we have 
\begin{equation}\label{e:Cal4/15-1}
\kappa \log 2\int_0^{\infty} t^{p-1} \min \{ \mu(\{ x \in \R^n \mid f(x) < t^{-1} \} ),  1-\mu(\{ x \in \R^n \mid f(x) < t^{-1} \} )\} \,dt \le  \int_\Omega f^{-p-1} \Phi_{f} \,d\mu. 
\end{equation}
Since the definition of the L\'{e}vy mean implies that 
$$
\mu(\{ x \in \R^n \mid f(x) < t^{-1} \} ) \le
\mu(\{ x \in \R^n \mid f(x) < {\rm med}(f) \} )
<\frac12, \;\;\; \forall t \ge \frac{1}{{\rm med}(f)}, 
$$
we enjoy 
\begin{align}
&\int_0^{\infty} t^{p-1} \min \{ \mu(\{ x \in \R^n \mid f(x) < t^{-1} \} ),  1-\mu(\{ x \in \R^n \mid f(x) < t^{-1} \} )\} \,dt
\notag \\
\ge&
\int_{\frac{1}{{\rm med}(f)}}^{\infty} t^{p-1} \mu(\{ x \in \R^n \mid f(x) < t^{-1} \} ) \,dt
\notag \\
=&
\int_{0}^{\infty} t^{p-1} \mu(\{ x \in \R^n \mid f(x) < t^{-1} \} ) \,dt
-
\int_0^{\frac{1}{{\rm med}(f)}} t^{p-1} \mu(\{ x \in \R^n \mid f(x) < t^{-1} \} ) \,dt
\notag \\
\ge&
\frac1p \int_\Omega f^{-p}\, d\mu - \frac{1}{p} \left( \frac{1}{{\rm med}(f)} \right)^p. 
\label{e:Cal4/15-2}
\end{align}
On the other hand, it holds that 
\begin{equation}\label{e:Cal4/15-3}
\int_\Omega f^{-p-1} \Phi_{f} \,d\mu
\le
\| f^{-1} \Phi_f \|_{L^\infty} \int_\Omega f^{-p} \,d\mu.
\end{equation}
Since $f^{-p} \in L^1(\mu)$ and $\| f^{-1} \Phi_f \|_{L^\infty} < +\infty$, combining \eqref{e:Cal4/15-1} with \eqref{e:Cal4/15-2} and \eqref{e:Cal4/15-3}, if $\frac{\kappa  \log 2}p > \| f^{-1} \Phi_f \|_{L^\infty}$, we obtain 
$$
\left( \frac{\kappa \log 2}p - \| f^{-1} \Phi_f \|_{L^\infty} \right) \int_\Omega f^{-p} \,d\mu 
\le 
\frac{\kappa \log 2}{p} \left( \frac{1}{{\rm med}(f)} \right)^p.
$$
Therefore it holds that 
$$
{\rm med}(f) \le \left( 1- \frac{p}{\kappa \log 2} \| f^{-1} \Phi_f \|_{L^\infty} \right)^{-\frac{1}{p}} \|f\|_{L^{-p}(\mu)}. 
$$
Since direct calculations yield $(1- t_0 p)^{- \frac1p + t_0} \le e^{t_0}$ for $t_0 >0$ and any $ 0 <p < t_0^{-1}$, we can obtain the desired assertion. 
\end{proof}

Gu\'{e}don \cite{Gu99} (see also \cite[Theorem 2.4.9]{BGVV14}) showed that 
every log-concave probability measure and norm $\|\cdot\|$ on $\R^n$ satisfy 
$$
\int_{\R^n} \|x\|\, d\mu \le \frac{C}{1+q} \left( \int_{\R^n} \|x\|^q\, d\mu \right)^{1/q}, 
$$
for any $-1 < q<0$, where $C>0$ is an absolute constant. 
Hence Proposition \ref{p:NegMom} is a generalization of Gu\'{e}don's result in some sense. 
An extension of Gu\'{e}don's result for general functions is also discussed in \cite{BN07, Fr09}. 
We also remark that Gu\'{e}don's result follows from the small ball estimate, 
$$
\mu\left(\left\{ x \in \R^n \,\Bigg{|}\,  \|x\| \le t \int_{\R^n} \|x\|\, d\mu \right\} \right)
\le Ct, \;\;\; \forall t \ge 1, 
$$
which is shown by Lata\l a \cite{L99}. 
Similarly, we can show the deviation inequality around the origin. 

\begin{corollary}\label{c:NegConc}
Let $\mu$ and $\Omega$ be as in Theorem \ref{t:FDI}. 
Let $f$ be a positive, continuous and symmetric quasi-convex function with 
$$
0 < \beta \coloneqq \frac{1}{\kappa \log 2} \| f^{-1} \Phi_f \|_{L^\infty} < +\infty.
$$
Suppose that $f$ also satisfies $f^p \in {\rm QC}(\Omega, \mu)$ and $f^{-p} \in L^1(\mu)$ for any $0<p <1/\beta$. 
Then for any small enough $\varepsilon>0$, it holds that 
$$
\mu(\{ x \in \Omega \mid f(x) \le t\, {\rm med}(f) \}) \le \left( \frac{e}{\varepsilon \beta} \right)^{1-\varepsilon \beta} t^{\frac1\beta - \varepsilon}, \;\;\; \forall t \in (0, 1].
$$
\end{corollary}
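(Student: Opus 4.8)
The plan is to feed the negative-exponent moment comparison of Proposition~\ref{p:NegMom} into Markov's inequality, and then to choose the free exponent $p$ so that the resulting power of $t$ is exactly $\frac1\beta-\varepsilon$.

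First I would fix $\varepsilon>0$ small enough that $p\coloneqq \frac1\beta-\varepsilon$ lies in $(0,1/\beta)$. By hypothesis $f^p\in{\rm QC}(\Omega,\mu)$ and $f^{-p}\in L^1(\mu)$ for every such $p$, so Proposition~\ref{p:NegMom} applies and gives
$$
{\rm med}(f)\le\left(\frac{e}{1-\beta p}\right)^{\beta}\|f\|_{L^{-p}(\mu)}.
$$
Since $\|f\|_{L^{-p}(\mu)}^{-p}=\int_\Omega f^{-p}\,d\mu$, raising this to the power $-p$ (and using ${\rm med}(f)>0$, which holds because $f$ is positive, continuous and symmetric quasi-convex, so ${\rm med}(f)\ge f(0)>0$) yields
$$
\int_\Omega f^{-p}\,d\mu\le\left(\frac{e}{1-\beta p}\right)^{\beta p}{\rm med}(f)^{-p}.
$$

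Next I would apply Markov's inequality to the nonnegative function $f^{-p}$: for $t\in(0,1]$,
$$
\mu(\{x\in\Omega\mid f(x)\le t\,{\rm med}(f)\})
=\mu(\{x\in\Omega\mid f(x)^{-p}\ge (t\,{\rm med}(f))^{-p}\})
\le (t\,{\rm med}(f))^{p}\int_\Omega f^{-p}\,d\mu.
$$
Substituting the bound from the previous display, the factors ${\rm med}(f)^{\pm p}$ cancel, leaving
$$
\mu(\{x\in\Omega\mid f(x)\le t\,{\rm med}(f)\})\le\left(\frac{e}{1-\beta p}\right)^{\beta p}t^{p}.
$$
Finally, with $p=\frac1\beta-\varepsilon$ one has $\beta p=1-\varepsilon\beta$ and $1-\beta p=\varepsilon\beta$, so the right-hand side is precisely $\left(\frac{e}{\varepsilon\beta}\right)^{1-\varepsilon\beta}t^{\frac1\beta-\varepsilon}$, which is the claim.

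I do not expect any genuine obstacle: all the substance is in Proposition~\ref{p:NegMom}, and what remains is a single application of Markov's inequality together with the bookkeeping choice of $p$. The only point deserving a word of care is the range of $\varepsilon$ — one must keep $\varepsilon<1/\beta$ (equivalently $p>0$) for Proposition~\ref{p:NegMom} to be invoked, which is exactly what ``small enough $\varepsilon$'' means in the statement.
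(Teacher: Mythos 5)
Your proof is correct and follows exactly the paper's route: Proposition~\ref{p:NegMom} is unwound to a bound on $\int_\Omega f^{-p}\,d\mu$, Markov's inequality is applied to $f^{-p}$, and the choice $p=\tfrac1\beta-\varepsilon$ produces the stated exponents. The only cosmetic difference is that you spell out the inversion ${\rm med}(f)^{-p}\ge\int_\Omega f^{-p}\,d\mu\,\cdot(1-\beta p)^{\beta p}e^{-\beta p}$ and the cancellation of ${\rm med}(f)^{\pm p}$, which the paper leaves implicit.
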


\begin{proof}
Let $p \coloneqq \frac{1}{\beta} -\varepsilon \in (0, \frac1\beta)$ for small enough $\varepsilon>0$. 
It follows from Proposition \ref{p:NegMom} that 
$$
\int_\Omega f^{-p}\, d\mu \le \left( \frac{e^\beta}{{\rm med}(f) (1-\beta p)^\beta} \right)^p.
$$
Hence Markov's inequality implies that 
$$
\mu(\{ x \in \Omega \mid f(x) \le t\, {\rm med}(f) \})
\le
\left( \frac{e}{1-\beta p} \right)^{\beta p} t^p, \;\;\; \forall t \in (0, 1].
$$
This implies the desired assertion by $p=\frac1\beta -\varepsilon$. 
\end{proof}

\begin{proof}[Proof of Corollary \ref{c:LDIntro} (2)]
Let $f$ be a positive $C^1$ symmetric quasi-convex function on some neighborhood of $\overline{\Omega}$ with $0<\beta < +\infty$ and $f^{-1/\beta} \in L^1(\mu)$. 
Then for any $p>0$, $f^p$ is also a positive $C^1$ symmetric quasi-convex function on some neighborhood of $\overline{\Omega}$, and thus $f^p \in {\rm QC}(\mu, \Omega)$ by Proposition \ref{p:ClassQC} since $\Omega$ is bounded. 
Moreover, for any $0<p<1/\beta$, we have $f^{-p} \in L^1(\mu)$ by $f^{-1/\beta} \in L^1(\mu)$ and H\"{o}lder's inequality. Hence we see that $f$ satisfies the assumptions in Corollary \ref{c:NegConc}. 
Applying Corollary \ref{c:NegConc} to $f$, we can conclude the desired assertion. 
\end{proof}

%
%

\subsection{$\mu$-perimeter}

Our goal in this subsection is to give the estimate of the $\mu$-perimeter of $K \in \K_s^n(\Omega)$ described in Corollary \ref{c:IsoIntro}. 

\begin{proof}[Proof of Corollary \ref{c:IsoIntro}]
Without loss of generality, we may suppose that $\mu(\overline{K})=\mu(K)$. 
We set for $\varepsilon>0$, 
$$
f_{\varepsilon}(x) \coloneqq \min\left\{1, \frac{1}{\varepsilon} d(x, K) \right\}, \;\;\; x \in \R^n. 
$$
Then $f_{\varepsilon}$ is a locally Lipschitz function on $\{x \in \R^n \mid f_{\varepsilon}(x) >0\}$. 
Moreover, we can check that $f_{\varepsilon}$ is a nonnegative and symmetric quasi-convex function with $f_\varepsilon \in L^1(\mu)$.
Hence it follows from Theorem \ref{t:FDIIntro} that 
\begin{equation}\label{e:4/24Cal2}
\Ent_\mu(f_{\varepsilon}) 
\le
\frac{2}{\kappa}  \int_{\Omega} |x||\nabla f_{\varepsilon}(x)|\, d\mu(x). 
\end{equation}
Since we see  that $\nabla f_{\varepsilon}(x) =0$ if $x \in K \cup (\R^n \setminus \overline{[K]_\varepsilon})$ and $|\nabla f_{\varepsilon}| \le \varepsilon^{-1}$ on $\R^n$, it holds that 
\begin{align*}
 &\int_{\Omega} |x||\nabla f_{\varepsilon}(x)|\, d\mu(x)
 \notag \\
 \le& 
 \frac{1}{\varepsilon} \int_\Omega |x| \mathbf{1}_{[K]_{\varepsilon} \setminus K}\, d\mu(x)
 \notag \\
 \le&
 \left( \frac{1}{\varepsilon} \int_\Omega |x|^{p'} \mathbf{1}_{[K]_{\varepsilon} \setminus K}(x)\, d\mu(x) \right)^{1/p'}
 \left( \frac{1}{\varepsilon}  \int_\Omega\mathbf{1}_{[K]_{\varepsilon} \setminus K}(x)\, d\mu(x) \right)^{1/p}
 \notag \\
 =&
 \left( \frac{1}{\varepsilon} \int_\Omega |x|^{p'} \mathbf{1}_{[K]_{\varepsilon} \setminus K}(x)\, d\mu(x) \right)^{1/p'}
 \left( \frac1\varepsilon(\mu([K]_\varepsilon) - \mu(K) )\right)^{1/p},
\end{align*}
where we used H\"{o}lder's inequality. 
Furthermore, since we have by $r(K) {\rm B}_2^n \subset K$, 
$$
[K]_\varepsilon = K + \varepsilon {\rm B}_2^n \subset \left(1 + \frac{\varepsilon}{r(K)} \right)K, 
$$
it holds that 
\begin{align*}
&
\frac{1}{\varepsilon} \int_\Omega |x|^{p'} \mathbf{1}_{[K]_{\varepsilon} \setminus K}(x)\, d\mu(x)
\\
= &
 \frac{1}{\varepsilon} \left(\int_{[K]_{\varepsilon}} |x|^{p'} \, d\mu(x) - \int_K |x|^{p'} \, d\mu(x) \right)
\\
\le&
\frac{1}{\varepsilon} \left(\int_{(1 + \frac{\varepsilon}{r(K)})K} |x|^{p'} \, d\mu(x) - \int_K |x|^{p'} \, d\mu(x) \right)
\\
=&
\int_K \frac1\varepsilon \left[ \left(1 + \frac{\varepsilon}{r(K)} \right)^{p'+n} e^{-\varphi((1 + \frac{\varepsilon}{r(K)})x)} - e^{-\varphi(x)} \right] |x|^{p'}\, dx. 
\end{align*}
Since $K$ is bounded and $\varphi$ is smooth, Fatou's lemma yields that 
\begin{align*}
&
\liminf_{\varepsilon \downarrow 0} \frac{1}{\varepsilon} \int_\Omega |x|^{p'} \mathbf{1}_{[K]_{\varepsilon} \setminus K}(x)\, d\mu(x)
\le
\frac{1}{r(K)} \int_K (p' + n - \langle x, \nabla \varphi(x) \rangle) |x|^{p'} e^{-\varphi(x)}\, dx. 
\end{align*}
Moreover, since we enjoy 
$$
{\rm div}(x |x|^{p'}e^{-\varphi(x)})
=
(p' + n - \langle x, \nabla \varphi(x) \rangle) |x|^{p'} e^{-\varphi(x)}, 
$$
the divergence theorem implies that 
\begin{equation}\label{e:4/24Cal}
\liminf_{\varepsilon \downarrow 0} \frac{1}{\varepsilon} \int_\Omega |x|^{p'} \mathbf{1}_{[K]_{\varepsilon} \setminus K}(x)\, d\mu(x)
\le
\frac{1}{r(K)} \int_{\partial K} \langle x, \eta(x) \rangle |x|^{p'}e^{-\varphi(x)}\, d\sigma_K(x). 
\end{equation}
Therefore, letting $\varepsilon \downarrow 0$ in \eqref{e:4/24Cal2}, we obtain 
$$
\liminf_{\varepsilon \downarrow 0} \Ent_{\mu}(f_\varepsilon)
\le
\frac{2}{\kappa} \left( \frac{1}{r(K)} \int_{\partial K} \langle x, \eta(x) \rangle |x|^{p'}e^{-\varphi(x)}\, d\sigma_K(x) \right)^{1/p'} 
\mu^+(K)^{1/p}.
$$
Since $\lim_{\varepsilon \downarrow 0} f_\varepsilon = \mathbf{1}_{\R^n \setminus \overline{K}}$ and $\mu(\overline{K})=\mu(K)$, the lower semi-continuity of the relative entropy yields that 
$$
- (1-\mu(K) ) \log (1-\mu(K))
\le
\frac{2}{\kappa} \left( \frac{1}{r(K)} \int_{\partial K} \langle x, \eta(x) \rangle |x|^{p'}e^{-\varphi(x)}\, d\sigma_K(x) \right)^{1/p'} 
\mu^+(K)^{1/p}, 
$$
which implies the desired assertion.

\end{proof}

\section{Revisit to the dilation inequality}\label{s:Re}

\subsection{Reconstruction}

%

In Section \ref{s:FDI}, we investigated the functional form of the dilation inequality. 
In this section, conversely, we will confirm that the dilation inequality can be recover from the functional inequality \eqref{e:FDI}.

For $K \in \K_s^n(\R^n)$, we define a function $\mathcal{N}_K \colon \R^n \to [0, \infty)$ by 
$$
\mathcal{N}_K(x) \coloneqq 
\begin{cases}
\|x\|_K & \text{if $x \notin K$} \\
1 & \text{if $x \in K$}. 
\end{cases}
$$ 
Then we can easily check that $\mathcal{N}_K$ is a continuous, symmetric and quasi-convex function on $\R^n$.

\begin{theorem}\label{t:ReConstDI}
Let $\mu$ be a probability measure supported on a symmetric convex domain $\Omega \subset \R^n$ with $\int_\Omega |x| \, d\mu(x) < + \infty$. 
We suppose that \eqref{e:FDI} holds for any $f \in \mathrm{QC}^1(\Omega, \mu)$ with some $\kappa>0$. 
Then $\mu$ satisfies the dilation inequality \eqref{e:DilKappa} for $\K_s^n(\Omega)$ with the constant $\kappa$. 
\end{theorem}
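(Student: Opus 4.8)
The plan is to feed the functional inequality \eqref{e:FDI} a one‑parameter family of increasing piecewise‑affine transforms of the function $\mathcal{N}_K$, let the slope tend to infinity so that the family collapses to a genuine two‑valued function adapted to $K$, and then optimise over the height of the resulting step to extract \eqref{e:DilKappa} with the sharp constant $\kappa$. First I would make three harmless reductions for a fixed $K\in\K_s^n(\Omega)$. If $\mu(K)\in\{0,1\}$ the right‑hand side of \eqref{e:DilKappa} vanishes while $\mu^*(K)\ge 0$ (since $K_\varepsilon\supseteq K$), so there is nothing to prove; thus I assume $m:=\mu(K)\in(0,1)$ and write $v:=1-m$. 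If $\mu^*(K)=+\infty$ there is again nothing to prove, so I assume $\mu^*(K)<+\infty$; since $\mu(K_\varepsilon)-\mu(K)\ge\mu(\overline K)-\mu(K)=\mu(\partial K)$ for every $\varepsilon\in(0,1)$, finiteness of $\mu^*(K)$ forces $\mu(\partial K)=0$, hence $\mu(\overline K)=m$. Finally, $0\in K$ and $K$ open give $r(K)>0$, so the gauge $\|\cdot\|_K$ — hence also $\mathcal{N}_K=\max\{1,\|\cdot\|_K\}$ — is Lipschitz with constant $1/r(K)$. I will also use the elementary reformulation
\[
\mu^*(K)=2\liminf_{s\downarrow 0}\frac{\mu((1+s)K)-\mu(K)}{s},
\]
which follows from $K_\varepsilon=\tfrac{1+\varepsilon}{1-\varepsilon}K=(1+s(\varepsilon))K$ with $s(\varepsilon)=\tfrac{2\varepsilon}{1-\varepsilon}$, $s(\varepsilon)/\varepsilon\to 2$, and the monotonicity of $s\mapsto\mu((1+s)K)$.

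For $c>1$ and a slope parameter $\lambda>0$ I would set $f_{\lambda,c}:=\min\{c,\,1+\lambda(\mathcal{N}_K-1)\}$ on $\Omega$. This is a $[1,c]$‑valued continuous symmetric function; it is quasi‑convex because its sublevel set at height $t$ is empty for $t\le1$, equals $(1+\tfrac{t-1}{\lambda})K$ for $1<t\le c$, and equals $\Omega$ for $t>c$; and since $f_{\lambda,c}$ is $\tfrac{\lambda}{r(K)}$‑Lipschitz one has $\varepsilon^{-1}\big(f_{\lambda,c}(x)-f_{\lambda,c}(\tfrac{1-\varepsilon}{1+\varepsilon}x)\big)\le\tfrac{2\lambda}{r(K)}|x|\in L^1(\mu)$ by the first‑moment hypothesis. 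Hence $f_{\lambda,c}\in\mathrm{QC}^1(\Omega,\mu)$ and \eqref{e:FDI} applies. A direct computation from \eqref{e:DefPhi_f}, using that $f_{\lambda,c}$ is an increasing piecewise‑affine function of $\mathcal{N}_K$ and that $\mathcal{N}_K$ equals the gauge on $\Omega\setminus\overline K$ and $\equiv1$ on $\overline K$, gives $\Phi_{f_{\lambda,c}}=2\lambda\|\cdot\|_K$ on $\{1<\|\cdot\|_K\le 1+\tfrac{c-1}{\lambda}\}$ and $\Phi_{f_{\lambda,c}}=0$ elsewhere. Bounding $\|x\|_K$ by $1+\tfrac{c-1}{\lambda}$ on the annulus, enlarging it to the open set $(1+\tfrac{c}{\lambda})K\setminus\overline K$, and using $\mu(\overline K)=m$, one gets $\int_\Omega\Phi_{f_{\lambda,c}}\,d\mu\le 2(\lambda+c-1)\big(\mu((1+\tfrac{c}{\lambda})K)-m\big)$.

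Next I would let $\lambda\to\infty$ with $c$ fixed. Since $0\le f_{\lambda,c}\le c$ and $f_{\lambda,c}\to f^c_\infty:=\mathbf 1_{\overline K}+c\,\mathbf 1_{\Omega\setminus\overline K}$ pointwise, dominated convergence gives $\Ent_\mu(f_{\lambda,c})\to\Ent_\mu(f^c_\infty)=vc\log c-(1+v(c-1))\log(1+v(c-1))=:\Psi(c)$. For the right‑hand side, substituting $u=c/\lambda$ and invoking the reformulation of $\mu^*(K)$ above yields
\[
\liminf_{\lambda\to\infty}\int_\Omega\Phi_{f_{\lambda,c}}\,d\mu\le\liminf_{u\downarrow 0}\big(2c+2(c-1)u\big)\frac{\mu((1+u)K)-m}{u}=c\,\mu^*(K).
\]
Passing to the $\liminf$ in $\lambda$ in \eqref{e:FDI} therefore gives $\Psi(c)\le\tfrac{c}{\kappa}\mu^*(K)$, i.e. $\mu^*(K)\ge\tfrac{\kappa}{c}\Psi(c)$ for every $c>1$. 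Since $1+v(c-1)=vc+m$, a direct expansion shows $\Psi(c)/c\to -v\log v$ as $c\to\infty$, whence $\mu^*(K)\ge\kappa\limsup_{c\to\infty}\Psi(c)/c=-\kappa v\log v=-\kappa(1-\mu(K))\log(1-\mu(K))$, which is exactly \eqref{e:DilKappa}.

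I expect the delicate point to be this last step, combined with the estimate of $\int_\Omega\Phi_{f_{\lambda,c}}\,d\mu$. The leading behaviour of $\Psi$ at $c=1$ is useless (one has $\Psi(c)/(c-1)\to 0$), so one is forced to push $c\to\infty$; and for the limiting constant to come out as exactly $\kappa$ — rather than something larger — it is essential that the coefficient of $c$ in the bound on $\int_\Omega\Phi_{f_{\lambda,c}}\,d\mu$ be asymptotically $\mu^*(K)$ and no larger multiple of it, which is precisely what the piecewise‑affine design of $f_{\lambda,c}$ together with the reduction $\mu(\partial K)=0$ secures. The remaining items — interchanging the $\liminf$ in $\lambda$ with the reparametrisation of $\mu^*$, verifying $f_{\lambda,c}\in\mathrm{QC}^1(\Omega,\mu)$, and the dominated‑convergence passage for the relative entropy — are routine.
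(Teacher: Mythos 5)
Your proof is correct and rests on the same core construction as the paper's: feed the functional inequality \eqref{e:FDI} a family of piecewise-affine truncations of the modified gauge $\mathcal{N}_K$, compute $\Phi$ exactly on the annulus where the test function ramps (so that the coefficient in the bound on $\int_\Omega\Phi\,d\mu$ matches $\mu^*(K)$ asymptotically with no loss), and make the same preliminary reduction to $\mu(\partial K)=0$. What differs is the limiting procedure for the entropy side. The paper's family $f_\sigma=\min\{\delta^{-1}(\mathcal{N}_K-1),\,1-\sigma\}$ with $\delta=2\sigma/(1-\sigma)^2$ takes values in $[0,1-\sigma]$, and a single limit $\sigma\downarrow 0$ drives it to the indicator $\mathbf{1}_{\R^n\setminus\overline K}$, whose relative entropy is exactly $-(1-\mu(K))\log(1-\mu(K))$; the entropy limit is handled via the lower semi-continuity of $\Ent_\mu$ coming from the dual formula \eqref{e:DualEnt}. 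You instead keep the test function bounded away from zero (values in $[1,c]$), let the slope $\lambda\to\infty$ first so that dominated convergence applies to the bounded sequence $f_{\lambda,c}\to\mathbf{1}_{\overline K}+c\,\mathbf{1}_{\Omega\setminus\overline K}$, arriving at $\Ent_\mu(f^c_\infty)=\Psi(c)$, and then send $c\to\infty$, using the elementary asymptotics $\Psi(c)/c\to-(1-\mu(K))\log(1-\mu(K))$ to recover the sharp constant $\kappa$. Your route trades the l.s.c. step for an extra (but elementary) limit and an explicit expansion of $\Psi(c)/c$; the paper's is slightly shorter because it aims the test functions directly at the indicator. Both are valid, both are sharp, and both hinge on the same identification of $\liminf\int\Phi\,d\mu$ with $\mu^*(K)$ through the reparametrisation $K_\varepsilon=\tfrac{1+\varepsilon}{1-\varepsilon}K$, so I would classify this as a minor variant of the paper's argument rather than a genuinely different one.
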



\begin{proof}
Let us fix $K \in \K_s^n(\Omega)$. 
We first remark that we may assume $\mu(\overline{K}) = \mu(K)$, otherwise we have $\mu^*(K) = + \infty$ since 
$$
\frac{\mu(K_\varepsilon) - \mu(K)}{\varepsilon} 
\ge
\frac{\mu(\overline{K}) - \mu(K)}{\varepsilon} 
$$
and thus nothing to prove. 

Let $\sigma \in (0, 1)$ and set $\delta \coloneqq 2\sigma/(1-\sigma)^2$.
We define 
\begin{align*}
f_\sigma(x) 
\coloneqq& 
\min \left\{ \frac{1}{\delta}(\mathcal{N}_K(x) -1), 1 - \sigma \right\}
\\
=&
\begin{cases}
\frac{1}{\delta}(\mathcal{N}_K(x) -1) & \;\;\; \text{if $x \in K_{\sigma}$} \\
1 - \sigma & \;\;\; \text{if $x \notin K_{\sigma} $}
\end{cases}
\\
=&
\begin{cases}
0 & \text{if $x \in K$} \\
\frac{1}{\delta}(\|x\|_K -1) & \text{if $x \in K_\sigma \setminus K$} \\
1-\sigma & \text{if $x \notin K_\sigma$}
\end{cases}
\end{align*}
for $x \in \R^n$.
Note that $f_\sigma$ is a nonnegative, symmetric, continuous and quasi-convex  function. 
We can also check that $f_\sigma \in L^1(\mu)$ since $f_\sigma \le 1-\sigma$ on $\R^n$. 
Moreover, we can obtain $f_\sigma \in {\rm QC}^1(\Omega, \mu)$. 
To see this, we shall justify \eqref{e:CondQC} for $f_\sigma$. For any $\varepsilon \in (0, 1)$ and $x \in \R^n$, it holds 
\begin{align*}
f_\sigma \left(\frac{1-\varepsilon}{1+\varepsilon}x \right)
=
\begin{cases}
0 & \text{if $x \in K_\varepsilon$} \\
\frac{1}{\delta}(\frac{1-\varepsilon}{1+\varepsilon}\|x\|_K -1) & \text{if $x \in (K_\sigma)_\varepsilon \setminus K_\varepsilon$} \\
1-\sigma & \text{if $x \notin (K_\sigma)_\varepsilon$}, 
\end{cases}
\end{align*}
where we used $\frac{1+\varepsilon}{1-\varepsilon}K = K_\varepsilon$. 
Hence for any $\varepsilon \in (0, \sigma)$, we have 
\begin{align*}
\frac{1}{\varepsilon} \left( f_\sigma(x) - f_\sigma \left(\frac{1-\varepsilon}{1+\varepsilon}x \right) \right)
=
\begin{cases}
\frac{1}{\varepsilon} \frac{1}{\delta}(\|x\|_K -1) & \text{if $x \in K_\varepsilon \setminus K$} \\
\frac{1}{\varepsilon} ( \frac{1}{\delta}(\|x\|_K -1) - \frac{1}{\delta}(\frac{1-\varepsilon}{1+\varepsilon}\|x\|_K -1) ) & \text{if $x \in K_\sigma \setminus K_\varepsilon$} \\
\frac{1}{\varepsilon}  ( 1-\sigma - \frac{1}{\delta}(\frac{1-\varepsilon}{1+\varepsilon}\|x\|_K -1) ) & \text{if $x \in (K_\sigma)_\varepsilon \setminus K_\sigma$} \\
0 & \text{otherwise}. 
\end{cases}
\end{align*}
For any $x \in K_\varepsilon$, since $\frac{1-\varepsilon}{1+\varepsilon}\|x\|_K = \|x\|_{K_\varepsilon} \le 1$, we have 
$$
\frac{1}{\varepsilon} \frac{1}{\delta}(\|x\|_K -1)
\le
\frac{1}{\varepsilon} \frac{1}{\delta} \left(\|x\|_K - \frac{1-\varepsilon}{1+\varepsilon}\|x\|_K\right)
=
\frac{2}{\delta(1+\varepsilon)} \|x\|_K
\le
\frac{2}{\delta} \|x\|_K. 
$$
Next for $x \not \in K_\sigma$, we have 
$$
1-\sigma \le \frac{1}{\delta}(\|x\|_K-1)
$$
by $\|x\|_K \ge \frac{1+\sigma}{1-\sigma}$. 
Thus it holds that 
\begin{align*}
\frac{1}{\varepsilon}  \left( 1-\sigma - \frac{1}{\delta} \left(\frac{1-\varepsilon}{1+\varepsilon}\|x\|_K -1 \right) \right)
\le&
\frac{1}{\varepsilon}  \left( \frac{1}{\delta}(\|x\|_K-1) - \frac{1}{\delta} \left(\frac{1-\varepsilon}{1+\varepsilon}\|x\|_K -1 \right) \right)
\\
=&
\frac{2}{\delta(1+\varepsilon)} \|x\|_K
\\
\le&
\frac{2}{\delta} \|x\|_K.
\end{align*}
In particular, for any $x \in \R^n$, we have 
\begin{align*}
\frac{1}{\varepsilon} \left( \frac{1}{\delta} (\|x\|_K -1 ) - \frac{1}{\delta} \left(\frac{1-\varepsilon}{1+\varepsilon}\|x\|_K -1 \right) \right)
\le
\frac{2}{\delta} \|x\|_K.
\end{align*}
Therefore summarizing our arguments above, we can conclude that 
$$
\frac{1}{\varepsilon} \left( f_\sigma(x) - f_\sigma \left(\frac{1-\varepsilon}{1+\varepsilon}x \right) \right)
\le
\frac{2}{\delta} \|x\|_K
$$
for any $x \in \R^n$ and $\varepsilon \in (0, \sigma)$, which ensures \eqref{e:CondQC} for $f_\sigma$ since we can take some constant $C>0$ such that $\| \cdot \|_K \le C|\cdot|$ and since we have $\int_\Omega |x| \, d\mu<+\infty$.
Hence we could check $f_\sigma \in {\rm QC}^1(\Omega, \mu)$. 
Moreover, we can enjoy 
$$
\Phi_{f_\sigma}(x)
=
\frac{2}{\delta} \|x\|_K \mathbf{1}_{\overline{K_\sigma} \setminus \overline{K}}(x), \;\;\; x \in \R^n.
$$
Hence we can apply \eqref{e:FDI} to $f_\sigma$ to see 
$$
\Ent_{\mu}(f_\sigma)
\le
\frac{1}{\kappa} \int_\Omega \Phi_{f_\sigma}\, d\mu.
$$
In the right hand side, since it holds that 
$$
\|x\|_K \le \frac{1+\sigma}{1-\sigma}, \;\;\; x \in \overline{K_\sigma}, 
$$
we obtain 
\begin{align*}
\int_\Omega \Phi_{f_\sigma}\, d\mu
\le&
\frac{2}{\delta} \frac{1+\sigma}{1-\sigma} (\mu(\overline{K_\sigma}) - \mu(\overline{K}) )
\le
(1-\sigma^2) \frac{1}{\sigma} (\mu(K_{(1 + \tau) \sigma}) - \mu(K) ) 
\end{align*}
for any small enough $\tau>0$, where we used $\overline{K_\sigma} \subset K_{(1+\tau)\sigma}$ and $\mu(\overline{K})=\mu(K)$ in the last inequality. 
Hence we have 
$$
\liminf_{\sigma \downarrow 0} \int_\Omega \Phi_{f_\sigma}\, d\mu
\le (1+\tau)\mu^*(K)
$$
for any small enough $\tau>0$, and thus 
$$
\liminf_{\sigma \downarrow 0} \int_\Omega \Phi_{f_\sigma}\, d\mu
\le \mu^*(K). 
$$
On the other hand, since it holds that $f_\sigma \to \mathbf{1}_{\R^n \setminus \overline{K}}$ as $\sigma \downarrow 0$, 
it follows from the lower semi-continuity of $\Ent_\mu$ that 
$$
\liminf_{\sigma \downarrow 0} \Ent_\mu (f_\sigma)
\ge 
\Ent_\mu (\mathbf{1}_{\R^n \setminus \overline{K}})
=
- (1-\mu(\overline{K}) ) \log (1-\mu(\overline{K}))
=
- (1-\mu(K) ) \log (1-\mu(K))
$$
by $\mu(\overline{K})=\mu(K)$. 
Eventually, we have 
$$
- (1-\mu(K) ) \log (1-\mu(K))
\le
\frac1\kappa \mu^*(K), 
$$
which is the desired assertion. 
\end{proof}

\subsection{Applications}

As a corollary of Theorem \ref{t:ReConstDI}, 
we obtain the stability of the dilation inequality for bounded perturbations, which is described in Corollary \ref{c:BDDIntro}. 
To show this corollary, we employ the following lemma. 


\begin{lemma}[{\cite[Lemma 5.1.7]{BGL14}}]\label{l:BGLp240}
For any nonnegative function $f \in L^1(\R^n, \mu)$ with $\int_{\R^n} f\, d\mu>0$, it holds 
$$
\Ent_\mu(f)
=
\inf_{r \in (0, \infty)} \int_{\R^n} [ \phi(f) - \phi(r) - \phi'(r) (f-r)]\, d\mu. 
$$
Here we set $\phi(r) \coloneqq r\log r$ for $r>0$.
\end{lemma}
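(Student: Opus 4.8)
The plan is to prove this by a pointwise tangent--line (convexity) inequality for $\phi(r)=r\log r$ followed by an explicit one--variable minimisation; the identity is the statement that $\Ent_\mu(f)$ equals the $\mu$-average of the Bregman-type remainder of $\phi$ at $f$ relative to the best constant. Throughout write $m \coloneqq \int_{\R^n} f\, d\mu$, which is finite (since $f \in L^1(\mu)$) and positive (by hypothesis), and set $\phi(0)\coloneqq 0$. Since $f\log f \ge -1/e$ and $\mu$ is a probability measure, $\int_{\R^n} f\log f\, d\mu$ is well defined in $(-\infty, +\infty]$; if it is $+\infty$ then $\Ent_\mu(f)=+\infty$ and, choosing $r=m$ in the computation below, the right-hand side is also $+\infty$, so I may assume $\int_{\R^n} f\log f\, d\mu<+\infty$.

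First I would fix $r \in (0,\infty)$ and use convexity of $\phi$ on $[0,\infty)$: its tangent line at $r$ lies below the graph, i.e.\ $\phi(s) \ge \phi(r) + \phi'(r)(s-r)$ for all $s\ge 0$, with $\phi'(r)=\log r+1$ (the boundary case $s=0$ being $\phi(0)-\phi(r)=-r\log r \ge -(\log r+1)r=\phi'(r)(0-r)$, i.e.\ $r\ge 0$). Hence the integrand $\phi(f(x))-\phi(r)-\phi'(r)(f(x)-r)$ is nonnegative $\mu$-a.e., every integral appearing in the infimum is nonnegative, and, because $f\in L^1(\mu)$, it expands termwise:
\begin{align*}
\int_{\R^n}\bigl[\phi(f)-\phi(r)-\phi'(r)(f-r)\bigr]\,d\mu
&=\int_{\R^n} f\log f\,d\mu - r\log r - (\log r+1)(m-r)\\
&=\int_{\R^n} f\log f\,d\mu - m\log r - m + r.
\end{align*}

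It then remains to minimise $g(r)\coloneqq \int_{\R^n} f\log f\,d\mu - m\log r - m + r$ over $r\in(0,\infty)$. As $g'(r)=1-m/r$ vanishes only at $r=m$ and $g''(r)=m/r^2>0$, the function $g$ is strictly convex with unique minimiser $r=m\in(0,\infty)$, so
$$
\inf_{r\in(0,\infty)} g(r)=g(m)=\int_{\R^n} f\log f\,d\mu - m\log m = \Ent_\mu(f),
$$
which is the claimed identity (and shows the infimum is attained, at $r=\int_{\R^n} f\,d\mu$).

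There is no real obstacle: the only care needed is with the convention $\phi(0)=0$ and the measurability/lower-integrability of $\phi(f)$, both dealt with above via $\phi(f)\ge -1/e$ and the reduction to the case of finite entropy, plus the one-line check of the tangent-line inequality at $s=0$. Everything else is the termwise expansion and an elementary calculus minimisation.
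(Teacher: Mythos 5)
Your proof is correct. The paper itself does not prove this lemma; it cites it directly to \cite[Lemma 5.1.7]{BGL14}, so there is no internal argument to compare against. Your route — observing that the integrand is the nonnegative Bregman divergence of $\phi$ at $f$ relative to $r$, expanding termwise using $f\in L^1(\mu)$ and $\mu(\R^n)=1$, and then minimising the resulting one-variable function in $r$ to find the unique minimiser $r=\int f\,d\mu$ — is the standard derivation of this variational formula and matches the one in Bakry--Gentil--Ledoux. You also correctly handle the edge cases: the tangent-line inequality at $s=0$ under the convention $\phi(0)=0$, the lower bound $\phi(f)\ge -1/e$ guaranteeing $\int\phi(f)\,d\mu$ is well defined in $(-\infty,+\infty]$, and the case $\int f\log f\,d\mu=+\infty$. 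One minor imprecision: to conclude that the infimum on the right is $+\infty$ in the infinite-entropy case, you should note that the termwise expansion makes the integral equal to $+\infty$ for \emph{every} $r>0$, not just for $r=m$; the argument you give in fact establishes this for all $r$, so only the phrasing needs tightening.
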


\begin{proof}[Proof of Corollary \ref{c:BDDIntro}]
We firstly note that since $h$ is bounded from above and below by a positive constant, we have ${\rm QC}^1(\Omega, \mu) = {\rm QC}^1(\Omega, \nu)$. 
Moreover it follows from $h \le b$ and Lemma \ref{l:BGLp240} that 
$
\Ent_{\nu}(f) \le b \Ent_\mu(f)
$ 
for any $f \in {\rm QC}^1(\Omega, \nu)$. 
Since $\mu$ satisfies the dilation inequality for $\K_s^n(\Omega)$ with $\kappa>0$, we can apply Theorem \ref{t:FDI} to see 
$$
\Ent_{\nu}(f) \le \frac{b}{\kappa} \int_{\Omega} \Phi_f\, d\mu
$$
for any $f \in {\rm QC}^1(\Omega, \nu)$. 
By $h \ge b^{-1}$, we conclude 
$$
\Ent_{\nu}(f) \le \frac{b^2}{\kappa} \int_{\Omega} \Phi_f\, d\nu
$$
for any $f \in {\rm QC}^1(\Omega, \nu)$. 


Now we can check $\int_\Omega |x|\, d\nu < +\infty$ since $\int_\Omega |x|\, d\mu < +\infty$ and $h$ is bounded from above. 
Hence we obtain the desired assertion by applying Theorem \ref{t:ReConstDI} to $\nu$. 
\end{proof}

As another consequence of Theorem \ref{t:ReConstDI}, we can observe the tensorization property in the special case, which is described in Corollary \ref{c:TensorProdIntro}. 
We remark that $\Omega_1 \times \Omega_2$ is also a symmetric convex domain in $\R^{n+1}$. 


\begin{proof}[Proof of Corollary \ref{c:TensorProdIntro}]
Let ${\rm QC}^{1,*}(\Omega_1 \times \Omega_2, \mu_1 \otimes \mu_2)$ be the set of all functions $f \in {\rm QC}^1(\Omega_1 \times \Omega_2, \mu_1 \otimes \mu_2)$ such that $f$ is bounded on $\Omega_1 \times \Omega_2$ and satisfies 
\begin{equation}\label{e:Uncond}
f(x, y) = f(-x, y) = f(x, -y) = f(-x,-y), \;\;\; \forall (x, y) \in \Omega_1 \times \Omega_2.
\end{equation}
Now, fix $f \in {\rm QC}^{1,*}(\Omega_1 \times \Omega_2, \mu_1 \otimes \mu_2)$, and set 
$$
g(x) \coloneqq \int_{\Omega_2} f(x, y)\, d\mu_2(y). 
$$
Then we see that 
\begin{align*}
&\Ent_{\mu_1 \otimes \mu_2}(f)
\\
=& \Ent_{\mu_1}(g) 
+
\int_{\Omega_1} \left(
\int_{\Omega_2} f(x, y) \log f(x, y) \, d\mu_2(y) 
-
\int_{\Omega_2} f(x, y)\, d\mu_2(y) \log \int_{\Omega_2} f(x, y)\, d\mu_2(y)
\right)
d\mu_1(x).
\end{align*}
If we have $g \in {\rm QC}^1(\Omega_1, \mu_1)$ and $f(x, \cdot) \in {\rm QC}^1(\Omega_2, \mu_2)$ for $\mu_1$-a.e., $x \in \Omega_1$, then we can apply Theorem \ref{t:FDI} to see that 
\begin{equation}\label{e:Cal4/6}
\Ent_{\mu_1 \otimes \mu_2}(f)
\le \frac{1}{\kappa_1} \int_{\Omega_1} \Phi_g(x)\, d\mu_1(x)
+
\frac{1}{\kappa_2} \int_{\Omega_1} \int_{\Omega_2} \Phi_{f(x, \cdot)}(y)\, d\mu_2(y) d\mu_1(x).
\end{equation}
By the definition, we have 
\begin{align*}
\Phi_g(x)
=
\limsup_{\varepsilon \downarrow 0}
\int_{\Omega_2} \frac{1}{\varepsilon} \left( f(x, y) - f \left(\frac{1-\varepsilon}{1+\varepsilon}x, y \right) \right)\, d\mu_2(y). 
\end{align*}
Now we remark that $f (\frac{1-\varepsilon}{1+\varepsilon}x, y ) \ge f (\frac{1-\varepsilon}{1+\varepsilon}x, \frac{1-\varepsilon}{1+\varepsilon}y)$ for any $(x, y) \in \Omega_1 \times \Omega_2$. Indeed, since $f$ is symmetric quasi-convex function on $\Omega_1 \times \Omega_2$ and satisfies \eqref{e:Uncond}, $f(z, \cdot)$ is symmetric quasi-convex on $\Omega_2$ for each $z \in \Omega_1$. 
In particular, $f(z, ty)$ is monotone increasing in $t\ge0$ for any $y \in \Omega_2$, and hence we conclude $f (\frac{1-\varepsilon}{1+\varepsilon}x, y ) \ge f (\frac{1-\varepsilon}{1+\varepsilon}x, \frac{1-\varepsilon}{1+\varepsilon}y)$. 
From this and $f \in {\rm QC}^1(\Omega_1 \times \Omega_2, \mu_1 \otimes \mu_2)$, we obtain 
$$
\frac{1}{\varepsilon} \left( f(x, y) - f \left(\frac{1-\varepsilon}{1+\varepsilon}x, y \right) \right)
\le 
\frac{1}{\varepsilon} \left( f(x, y) - f \left(\frac{1-\varepsilon}{1+\varepsilon}x, \frac{1-\varepsilon}{1+\varepsilon}y \right) \right)
\le
h(x, y)
$$
for all small enough $\varepsilon >0$ and $(x, y) \in \Omega_1 \times \Omega_2$, where $h$ is a Borel function in $L^1(\Omega_1 \times \Omega_2, \mu_1 \otimes \mu_2)$. 
In particular, $h(x, \cdot) \in L^1(\Omega_2, \mu_2)$ holds for $\mu_1$-a.e., $x \in \Omega_1$. 
Hence it follows from Fatou's lemma that 
$$
\Phi_g(x)
\le
\int_{\Omega_2} \Phi_f(x, y)\, d\mu_2(y)
$$
for $\mu_1$-a.e., $x \in \Omega_1$. 
Similarly, we can observe that 
$ \Phi_{f(x, \cdot)}(y) \le \Phi_f(x, y)$. 
Therefore we can conclude 
\begin{equation}\label{e:Ent5/5}
\Ent_{\mu_1 \otimes \mu_2}(f)
\le
\left(\frac{1}{\kappa_1} + \frac{1}{\kappa_2} \right) \int_{\Omega_1 \times \Omega_2} \Phi_f(x, y) \, d\mu_1 \otimes \mu_2(x, y)
\end{equation}
for any $f \in {\rm QC}^{1,*}(\Omega_1 \times \Omega_2, \mu_1 \otimes \mu_2)$. 

Now let $K \subset \R \times \R^n$ be a symmetric open convex set such that if $(x, y) \in \R \times \R^n$ belongs to $K$, then $(-x, y), (x, -y), (-x, -y)$ also belong to $K$. 
Let us consider the function $f_\sigma$ given in the proof of Theorem \ref{t:ReConstDI} for $\sigma \in (0, 1)$ and $K$. 
Then by the property of $K$ and the definition of $f_\sigma$, we see that $f_\sigma \in {\rm QC}^{1,*}(\Omega_1 \times \Omega_2, \mu_1 \otimes \mu_2)$. We also remark that it holds that $\int_{\R \times \R^n} |(x, y)|\, d\mu_1\otimes\mu_2(x, y) < +\infty$ by assumptions. 
Hence, by iterating the same arguments for $f_\sigma$ as in the proof of Theorem \ref{t:ReConstDI} via \eqref{e:Ent5/5}, we can obtain 
$$
\mu^*(K) \ge - \left(\frac{1}{\kappa_1} + \frac{1}{\kappa_2} \right)^{-1} (1-\mu(K)) \log (1-\mu(K)), 
$$
which is the desired assertion. 

Finally, to justify the above argument, we prove $g \in {\rm QC}^1(\Omega_1, \mu_1)$ and $f(x, \cdot) \in {\rm QC}^1(\Omega_2, \mu_2)$ for $\mu_1$-a.e., $x \in \Omega_1$. 
Since $f$ is bounded and satisfies \eqref{e:Uncond}, we can easily check that $f(x, \cdot)$ and $g$ are nonnegative, continuous and symmetric functions, and $f(x, \cdot)$ is quasi-convex. 
In addition, since $f$ is integrable, we see that $g \in L^1(\Omega_1, \mu_1)$ and $f(x, \cdot) \in L^1(\Omega_2, \mu_2)$ for $\mu_1$-a.e., $x \in \Omega_1$. 
Since we have already shown that $g$ and $f(x, \cdot)$ satisfy the condition \eqref{e:CondQC} by the above argument, we obtain $f(x, \cdot) \in {\rm QC}^1(\Omega_2, \mu_2)$ for $\mu_1$-a.e., $x \in \Omega_1$. 
Therefore it suffices to show that $g$ is quasi-convex from which we can conclude $g \in {\rm QC}^1(\Omega_1, \mu_1)$. 

Let $\lambda>0$ and consider $A(\lambda) \coloneqq \{ x \in \R \mid g(x) < \lambda\}$. 
Without loss of generality, we may assume that $A(\lambda) \neq \emptyset$. 
We remark that $A(\lambda)$ is symmetric since $g$ is symmetric. 
Now take $z \in A(\lambda)$ with $z\ge0$. 
Since $f$ is quasi-convex, $f(\cdot, y)$ is also quasi-convex for any $y \in \Omega_2$. 
In particular, since $f(\cdot, y)$ is symmetric by \eqref{e:Uncond}, we see that $f(tz, y)$ is monotone increasing in $t \ge 0$. 
Thus for any $z' \in [0, z]$ and $y \in \Omega_2$, we have $f(z', y) \le f(z, y)$ which implies that $g(z') \le g(z) <\lambda$. 
Since $g$ is symmetric, we obtain $[-z, z] \subset A(\lambda)$ from which $A(\lambda)$ should be an interval. 
Hence $g$ is quasi-convex, and our proof is complete. 
\end{proof}

\begin{remark}
It is natural to expect the tensorization property for high dimensional spaces. 
More precisely, if $\mu_1$ and $\mu_2$ are probability measures on $\R^{n_1}$ and $\R^{n_2}$ satisfying dilation inequalities for $\K_s^{n_1}(\R^{n_1})$ and $\K_s^{n_2}(\R^{n_2})$, respectively, then does $\mu_1 \otimes \mu_2$ also satisfy the dilation inequality for $\K_s^{n_1 + n_2}(\R^{n_1+n_2})$? 
Corollary \ref{c:TensorProdIntro} gives a partial answer affirmatively when either $n_1$ or $n_2$ is 1, but it is open when $n_1, n_2 \ge 2$. 
In our argument, this difficulty comes from quasi-convexity. 
In fact, let $f_1$ and $f_2$ be nonnegative  symmetric quasi-convex functions on $\R^n$. 
Then in our proof of Corollary \ref{c:TensorProdIntro}, we used the fact that $f_1 + f_2$ is also a symmetric quasi-convex function when $n=1$. 
However, when $n\ge 2$, the same phenomenon fails. 
For instance, let us consider functions $f_1(x_1, x_2) = |x_1|^{2/3}$ and $f_2(x_1, x_2) = |x_2|^{2/3}$ for $(x_1, x_2) \in \R^2$. Then we can check that both functions are symmetric and quasi-convex, but $f_1 + f_2$ is not quasi-convex on $\R^2$ (the curve $\{ (x_1, x_2) \in \R^2 \mid f_1(x_1, x_2) + f_2(x_1, x_2)=1\}$ is the astroid). 
\end{remark}

In our proof of Corollary \ref{c:TensorProdIntro}, we also showed the tensorization property of the functional dilation inequality \eqref{e:FDI}. 
If we focus on this tensorization, we can improve the functional version of Corollary  \ref{c:TensorProdIntro} in the special case. 

\begin{corollary}\label{c:TensorProdF}
Let $\mu_1, \mu_2$ be probability measures supported on symmetric convex domains $\Omega_1 \subset \R$ and $\Omega_2 \subset \R^n$, respectively. 
We suppose that $\mu_1, \mu_2$ satisfy \eqref{e:FDI} with some $\kappa_1, \kappa_2>0$, respectively. 
Then $\mu_1 \otimes \mu_2$ satisfies \eqref{e:FDI} with the constant $\kappa=\min\{\kappa_1, \kappa_2\}$ for any bounded function $f \in {\rm QC}^1(\Omega_1 \times \Omega_2, \mu_1\otimes \mu_2) \cap C^1(\Omega_1 \times \Omega_2)$ satisfying  \eqref{e:Uncond}. 
\end{corollary}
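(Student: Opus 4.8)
The plan is to re-run the proof of Corollary \ref{c:TensorProdIntro}, exploiting the additional $C^1$-regularity of $f$ so that $\Phi_f$ decomposes \emph{exactly} as the sum of a first-variable contribution and a second-variable contribution, each of which is nonnegative. It is precisely this exact splitting (in place of the one-sided bound available for merely quasi-convex functions) that allows one to replace the constant $\kappa_1^{-1}+\kappa_2^{-1}$ of Corollary \ref{c:TensorProdIntro} by the smaller $\max\{\kappa_1^{-1},\kappa_2^{-1}\}=\min\{\kappa_1,\kappa_2\}^{-1}$.

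First I would fix a bounded $f\in{\rm QC}^1(\Omega_1\times\Omega_2,\mu_1\otimes\mu_2)\cap C^1(\Omega_1\times\Omega_2)$ satisfying \eqref{e:Uncond} and set $g(x)\coloneqq\int_{\Omega_2}f(x,y)\,d\mu_2(y)$. Exactly as in the proof of Corollary \ref{c:TensorProdIntro} one has the chain rule
\begin{equation*}
\Ent_{\mu_1\otimes\mu_2}(f)=\Ent_{\mu_1}(g)+\int_{\Omega_1}\Ent_{\mu_2}(f(x,\cdot))\,d\mu_1(x),
\end{equation*}
together with the facts, established there, that $g\in{\rm QC}^1(\Omega_1,\mu_1)$, that $f(x,\cdot)\in{\rm QC}^1(\Omega_2,\mu_2)$ for $\mu_1$-a.e.\ $x$, and that each slice $f(\cdot,y)$ and $f(x,\cdot)$ is symmetric quasi-convex (the last using \eqref{e:Uncond} and joint quasi-convexity). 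Applying the hypothesis \eqref{e:FDI} for $\mu_1$ to $g$ and for $\mu_2$ to every $f(x,\cdot)$ yields
\begin{equation*}
\Ent_{\mu_1\otimes\mu_2}(f)\le\frac{1}{\kappa_1}\int_{\Omega_1}\Phi_g\,d\mu_1+\frac{1}{\kappa_2}\int_{\Omega_1}\int_{\Omega_2}\Phi_{f(x,\cdot)}(y)\,d\mu_2(y)\,d\mu_1(x),
\end{equation*}
where $\Phi_{f(x,\cdot)}$ denotes the operator \eqref{e:DefPhi_f} applied in the $\Omega_2$-variable.

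Next I would invoke $C^1$-regularity via \eqref{e:SpePhi_f1}, applied in each block of variables: $\Phi_{f(\cdot,y)}(x)=2\langle x,\nabla_x f(x,y)\rangle$, $\Phi_{f(x,\cdot)}(y)=2\langle y,\nabla_y f(x,y)\rangle$, and on the product $\Phi_f(x,y)=2\langle x,\nabla_x f(x,y)\rangle+2\langle y,\nabla_y f(x,y)\rangle$; by the symmetric quasi-convexity of the slices both summands are pointwise nonnegative. Arguing as in Corollary \ref{c:TensorProdIntro} (passing the $\limsup$ through the $\mu_2$-integral by dominated convergence, the domination coming from \eqref{e:CondQC} for $f$ together with the monotonicity of $t\mapsto f(\frac{1-\varepsilon}{1+\varepsilon}x,ty)$) one obtains $\Phi_g(x)\le\int_{\Omega_2}2\langle x,\nabla_x f(x,y)\rangle\,d\mu_2(y)$. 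Substituting and using $\Phi_f=2\langle x,\nabla_x f\rangle+2\langle y,\nabla_y f\rangle$ with both terms nonnegative gives
\begin{equation*}
\Ent_{\mu_1\otimes\mu_2}(f)\le\max\{\kappa_1^{-1},\kappa_2^{-1}\}\int_{\Omega_1\times\Omega_2}\Phi_f\,d(\mu_1\otimes\mu_2)=\frac{1}{\min\{\kappa_1,\kappa_2\}}\int_{\Omega_1\times\Omega_2}\Phi_f\,d(\mu_1\otimes\mu_2),
\end{equation*}
which is \eqref{e:FDI} for $\mu_1\otimes\mu_2$ with $\kappa=\min\{\kappa_1,\kappa_2\}$.

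I expect the one step deserving care to be the passage from $\Phi_g$ to $\int_{\Omega_2}2\langle x,\nabla_x f(x,y)\rangle\,d\mu_2(y)$: one must check that the difference quotients $\varepsilon^{-1}(f(x,y)-f(\frac{1-\varepsilon}{1+\varepsilon}x,y))$ are bounded, uniformly in small $\varepsilon$, by a fixed $L^1(\mu_1\otimes\mu_2)$-function, which is exactly the ${\rm QC}^1$-domination argument already carried out in the proof of Corollary \ref{c:TensorProdIntro}; once this is granted, dominated convergence applies and the improvement over Corollary \ref{c:TensorProdIntro} is forced purely by the additivity $\Phi_f=\Phi_{f(\cdot,y)}+\Phi_{f(x,\cdot)}$ valid for $C^1$ functions, which for merely quasi-convex $f$ was only available as the one-sided estimate $\Phi_g(x)\le\int_{\Omega_2}\Phi_f(x,y)\,d\mu_2(y)$ responsible for the weaker constant there.
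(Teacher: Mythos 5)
Your proposal follows essentially the same route as the paper's proof: decompose the entropy as in Corollary \ref{c:TensorProdIntro}, apply the functional inequality \eqref{e:FDI} for $\mu_1$ to the marginal $g$ and for $\mu_2$ to the slices $f(x,\cdot)$, then use the $C^1$-regularity (via \eqref{e:SpePhi_f1}) to obtain the exact additive splitting $\Phi_f(x,y)=2\langle x,\nabla_x f(x,y)\rangle+2\langle y,\nabla_y f(x,y)\rangle$ with both terms nonnegative, which permits replacing $\kappa_1^{-1}+\kappa_2^{-1}$ by $\max\{\kappa_1^{-1},\kappa_2^{-1}\}$. Your exposition is, if anything, slightly more explicit than the paper's about why the $C^1$ hypothesis buys the improvement (exact splitting plus nonnegativity of each summand, as opposed to the one-sided estimate $\Phi_g(x)\le\int_{\Omega_2}\Phi_f(x,y)\,d\mu_2(y)$ available in the merely quasi-convex case).
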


\begin{proof}
Almost arguments are same in the proof of Corollary \ref{c:TensorProdIntro}, but we remark that since $f \in C^1(\Omega_1 \times \Omega_2)$, we have 
$$
\Phi_g(x) = 2\int_{\Omega_2} \langle x, \nabla_x f(x, y) \rangle\, d\mu_2(y), \;\;\; x \in \Omega_1
$$
and
$$
\Phi_{f(x, \cdot)}(y) = 2\langle y, \nabla_y f(x, y) \rangle, \;\;\;
(x, y) \in \Omega_1 \times \Omega_2.
$$
Hence since we also see that 
$$
\Phi_f(x, y) = 2 \langle (x, y), \nabla f(x,y) \rangle, \;\;\; (x, y) \in \Omega_1 \times \Omega_2, 
$$ 
it follows from \eqref{e:Cal4/6} that  
\begin{align*}
&\Ent_{\mu_1 \otimes \mu_2}(f)
\\
\le& \max\left\{ \frac{1}{\kappa_1}, \frac{1}{\kappa_2} \right\} 
\left( 2\int_{\Omega_1} \int_{\Omega_2} \langle x, \nabla_x f(x, y) \rangle\, d\mu_2(y)\, d\mu_1(x)
+
 2\int_{\Omega_1} \int_{\Omega_2} \langle y, \nabla_y f(x, y) \rangle\, d\mu_2(y) d\mu_1(x)
 \right)
 \\
 =&
\frac{2}{\min\{\kappa_1, \kappa_2\}}
\int_{\Omega_1 \times \Omega_2} \langle (x,y), \nabla f(x, y) \rangle\, d\mu_1 \otimes \mu_2(x,y)
\\
 =&
\frac{1}{\min\{\kappa_1, \kappa_2\}}
\int_{\Omega_1 \times \Omega_2} \Phi_f(x, y) \, d\mu_1 \otimes \mu_2(x,y), 
\end{align*}
which is the desired assertion.
\end{proof}

\appendix

\section{Appendix}

Here we will investigate the dilation inequality for symmetric log-concave probability measures on $\R$ and the standard Gaussian measure $\gamma_n$ on $\R^n$. 

\begin{proposition}\label{p:LC}
Every symmetric log-concave probability measure on $\R$ satisfies the dilation inequality for $\K_s^1(\R)$ with $\kappa=2$. 
\end{proposition}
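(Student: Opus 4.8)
The plan is to reduce the statement to a one-dimensional computation. In $\mathbb{R}$ every nonempty symmetric open convex set is an interval $K=(-a,a)$ with $a\in(0,\infty]$, and the case $a=\infty$ (equivalently $\mu(K)=1$) is trivial since the right-hand side of \eqref{e:DilKappa} vanishes. A symmetric log-concave probability measure on $\mathbb{R}$ is either $\delta_0$, for which the inequality is again trivial for every $a>0$, or it has an even log-concave density $p=e^{-\varphi}$ with $\varphi$ convex; I treat the latter case. Write $G(a)\coloneqq 1-\mu((-a,a))=2\int_a^\infty p$. Since $\varphi$ is continuous on the interior of its domain, so is $p$ there, and using $K_\varepsilon=\tfrac{1+\varepsilon}{1-\varepsilon}K$ together with $\tfrac{1+\varepsilon}{1-\varepsilon}a-a=\tfrac{2\varepsilon a}{1-\varepsilon}$ one checks that $\mu^*((-a,a))=4a\,p(a)$ at every $a$ in the interior of $\operatorname{supp}\mu$, while $\mu^*((-a,a))\ge 0$ in general and $G(a)=0$ (so nothing to prove) for $a$ outside $\operatorname{supp}\mu$. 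Hence it suffices to establish $2a\,p(a)\ge -G(a)\log G(a)$ for $a$ in the interior of the support.

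The key external input is the classical fact that the survival function $\bar F(a)=\int_a^\infty p$ of a log-concave density is itself log-concave (this follows from Pr\'ekopa's theorem, or equivalently from the monotonicity of the hazard rate $p/\bar F$ for log-concave $p$). Consequently $\phi\coloneqq-\log G=-\log(2\bar F)$ is convex on $\{G>0\}$, with $\phi(0)=0$ because $G(0)=2\int_0^\infty p=1$, and with $\phi'(a)=-G'(a)/G(a)=2p(a)/G(a)$ at continuity points of $p$, in particular on the interior of the support. Now I would apply the supporting-line inequality for the convex function $\phi$ at the point $a$, evaluated at $0$:
\[
0=\phi(0)\ \ge\ \phi(a)+\phi'(a)(0-a),
\]
which rearranges to $a\phi'(a)\ge\phi(a)$, i.e. $a\cdot\tfrac{2p(a)}{G(a)}\ge -\log G(a)$. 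Multiplying by $G(a)>0$ gives $2a\,p(a)\ge -G(a)\log G(a)$, and multiplying by $2$ yields $\mu^*((-a,a))=4a\,p(a)\ge -2(1-\mu(K))\log(1-\mu(K))$, which is exactly \eqref{e:DilKappa} with $\kappa=2$.

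The step that requires genuine care, rather than a cited result or a one-line convexity argument, is the identification $\mu^*((-a,a))=4a\,p(a)$: one has to deal with the $\liminf$, with the fact that $p$ is only continuous (not necessarily differentiable) on the interior of its support, and with the degenerate regimes at the boundary of $\operatorname{supp}\mu$ and at $a=\infty$, where one checks directly that both sides reduce to a triviality. Everything else is either classical (log-concavity of $\bar F$) or elementary convexity. As a consistency check, the two-sided exponential density $p(x)=\tfrac12 e^{-|x|}$ gives $G(a)=e^{-a}$ and $\phi(a)=a$, so the supporting-line inequality holds with equality and $\kappa=2$ is sharp within this class.
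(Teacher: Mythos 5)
Your proof is correct, and it takes a genuinely different route from the paper's. Both arguments begin by computing $\mu^*((-t,t)) = 4t\,p(t)$ for the even log-concave density $p=e^{-\varphi}$, and both correctly dispose of the degenerate cases ($\delta_0$, $a=\infty$, and $a$ outside the support). The paper then passes to the one-sided conditional measure $d\nu = 2e^{-\varphi}\mathbf{1}_{(0,\infty)}\,dx$, observes that $\nu$ is log-concave, that $\mu^*((-t,t)) = 2\nu^*((0,t))$ and $\nu((0,t))=\mu((-t,t))$, and invokes the Nazarov--Sodin--Volberg dilation inequality \eqref{e:Borell} for the log-concave measure $\nu$ and the non-symmetric convex set $(0,t)$ as a black box. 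You instead prove the inequality from scratch: you use the classical fact (via Pr\'ekopa, or monotone hazard rate) that the survival function $\bar F(a)=\int_a^\infty p$ is log-concave, so $\phi(a)=-\log\big(1-\mu((-a,a))\big)$ is convex with $\phi(0)=0$ and $\phi'(a)=2p(a)/G(a)$, and the supporting-line inequality $a\phi'(a)\ge\phi(a)$ delivers $2a\,p(a)\ge -G(a)\log G(a)$, hence the claim after multiplying by $2$. What the paper's route buys is brevity and uniformity, reusing the already-established general dilation inequality for log-concave measures; what your route buys is a self-contained, elementary derivation at the one-dimensional level that makes transparent both why the constant $\kappa=2$ appears (a doubling coming from symmetrization, visible in $G=2\bar F$ and $\mu^*=4t\,p(t)$ versus $\nu^*=2t\,p(t)$) and why the two-sided exponential saturates it (your $\phi(a)=a$ is linear, so the supporting line is exact). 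One could say you have unfolded the one-dimensional content of \eqref{e:Borell} that the paper cites: the two proofs identify the same underlying mechanism, but yours exposes it directly rather than appealing to the general result.
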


\begin{proof}
Let $\mu=e^{-\varphi(x)}\, dx$ be a symmetric probability measure on $\R$. 
Then we see that 
$\mu^*((-t, t)) = 4 e^{-\varphi(t)}t$ for any $t>0$ (see \cite{Ts21}). 
We set $d\nu \coloneqq 2e^{-\varphi(x)} \mathbf{1}_{(0, \infty)}\, dx$, then $\nu$ is a log-concave probability measure since $\mu$ is symmetric. 
Moreover, we enjoy $\nu^*((0, t)) = 2e^{-\varphi(t)}t$ for any $t>0$. 
Hence, we obtain 
$\mu^*((-t, t)) = 2 \nu^*((0, t))$. 
On the other hand, since every log-concave probability measure satisfies the dilation inequality \eqref{e:Borell}, we can conclude 
$$
\mu^*((-t, t)) \ge - 2 (1-\nu((0, t))) \log (1-\nu((0,t))). 
$$
Finally, since we have $\nu((0, t)) = \mu((-t, t))$ by symmetry of $\mu$, we obtain the desired assertion. 
\end{proof}

\begin{proposition}\label{p:GaussDil}
The standard Gaussian measure $\gamma_n$ satisfies the dilation inequality for $\K_s^n(\R^n)$ with $\kappa=2$. 
\end{proposition}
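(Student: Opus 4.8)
The plan is to reduce the statement to dimension one, where it is already available from Proposition~\ref{p:LC}. Since $\gamma_n(K)=1$ forces $K=\R^n$ (for which there is nothing to prove), assume $\gamma_n(K)\in(0,1)$. Using the identity $K_\varepsilon=\frac{1+\varepsilon}{1-\varepsilon}K$ for $K\in\K_s^n(\R^n)$ and writing $g_K(\lambda):=\gamma_n(\lambda K)$, one has $\gamma_n^*(K)=\liminf_{\varepsilon\downarrow0}\varepsilon^{-1}\bigl(g_K(\tfrac{1+\varepsilon}{1-\varepsilon})-g_K(1)\bigr)$. By Ehrhard's inequality (applied to the Minkowski combinations $\lambda K=(1-\theta)(\lambda_1K)+\theta(\lambda_2K)$) the map $\lambda\mapsto\Phi^{-1}(g_K(\lambda))$ is concave on $(0,\infty)$, hence $g_K$ is non-decreasing and has a finite right derivative at the interior point $\lambda=1$; since $\tfrac{1+\varepsilon}{1-\varepsilon}=1+2\varepsilon+o(\varepsilon)$ this gives $\gamma_n^*(K)=2\,g_K'(1^+)$. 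The same computation in dimension one gives $\gamma_1^*((-t,t))=2\,g_S'(1^+)$ where $g_S(\lambda):=\gamma_1((-\lambda t,\lambda t))$.

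The crux is the next step: choose $t>0$ with $\gamma_1((-t,t))=\gamma_n(K)$ (possible as $\gamma_n(K)\in(0,1)$) and invoke the $S$-inequality of Latała--Oleszkiewicz, which asserts that for a symmetric convex set $K$ and the symmetric strip $S=\{x\in\R^n:|x_1|<t\}$ with $\gamma_n(S)=\gamma_n(K)$ one has $\gamma_n(\lambda K)\ge\gamma_n(\lambda S)=\gamma_1((-\lambda t,\lambda t))$ for all $\lambda\ge1$. Thus $g_K(\lambda)\ge g_S(\lambda)$ for $\lambda\ge1$ with equality at $\lambda=1$; dividing by $\lambda-1>0$ and letting $\lambda\downarrow1$ yields $g_K'(1^+)\ge g_S'(1^+)$, so $\gamma_n^*(K)\ge\gamma_1^*((-t,t))$. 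Finally $\gamma_1$ is a symmetric log-concave probability measure on $\R$, so Proposition~\ref{p:LC} applied to $\gamma_1$ gives $\gamma_1^*((-t,t))\ge-2\bigl(1-\gamma_1((-t,t))\bigr)\log\bigl(1-\gamma_1((-t,t))\bigr)=-2(1-\gamma_n(K))\log(1-\gamma_n(K))$ by the choice of $t$. Chaining the three inequalities proves \eqref{e:DilKappa} with $\kappa=2$.

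The hard part is entirely concentrated in the $S$-inequality of Step~2: the rest is a one-variable differentiation argument, and it is precisely the point where one must use something about $\gamma_n$ beyond log-concavity (Ehrhard-type behaviour), since log-concavity alone only yields $\kappa=1$. An alternative route would be to establish instead the functional inequality \eqref{e:FDI} for $\gamma_n$ with $\kappa=2$ and apply Theorem~\ref{t:ReConstDI}, trying to reduce to the one-dimensional functional inequality by tensorization; but, as the Remark after Corollary~\ref{c:TensorProdIntro} shows, symmetric quasi-convexity does not tensorize in dimension $\ge2$, so this approach meets the same obstruction and the $S$-inequality (or an equivalent Gaussian input) appears unavoidable.
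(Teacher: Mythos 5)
Your proof is correct and follows essentially the same route as the paper: reduce to the one-dimensional case via the Latała--Oleszkiewicz $S$-inequality (comparing $K$ with a symmetric strip of the same Gaussian measure), then apply Proposition~\ref{p:LC} to $\gamma_1$. The only difference is your detour through Ehrhard's inequality to justify that $g_K$ has a finite right derivative at $\lambda=1$; this is unnecessary, since the $\liminf$ in the definition of $\gamma_n^*$ passes through the pointwise comparison $\gamma_n(\lambda K)\ge\gamma_1((-\lambda t,\lambda t))$ (with equality at $\lambda=1$) directly, without requiring the derivative to exist.
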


\begin{proof}
We employ the result by Lata\l a--Oleszkiewicz \cite{LO99} where they showed that 
for any $K \in \K_s^n(\R^n)$, it holds that 
$$
\gamma_n(tK) \ge \gamma_n(tP), \;\;\; \forall t \ge 1, 
$$
where $P \subset \R^n$ is a symmetric strip such that $\gamma_n(P)=\gamma_n(K)$. 
In particular, we can take $\theta_K>0$ satisfying $\gamma_n(P) = \gamma_n(\R^{n-1} \times (-\theta_K, \theta_K))$, $\gamma_n(tK) \ge \gamma_1((-t\theta_K, t\theta_K))$ for any $t \ge 1$ and $\gamma_n(K) = \gamma_1((-\theta_K, \theta_K))$.
Hence, by the definition of the dilation area, we can conclude 
$\gamma_n^*(K) \ge \gamma_1^*((-\theta_K, \theta_K))$. 
Finally, since $\gamma_1$ is log-concave, Proposition \ref{p:LC} implies the desired assertion. 
\end{proof}

We also remark that $\kappa=2$ is optimal in Proposition \ref{p:GaussDil}. 
Indeed, we can check that for $t>0$, 
$$
\gamma_1^*((-t, t)) 
=
\frac{4}{\sqrt{2\pi}} e^{-\frac12 t^2} t
$$
and 
$$
- (1-\gamma_1((-t,t))) \log (1-\gamma_1((-t,t)))
=
\frac{2}{\sqrt{2\pi}} t + o(t)
$$
as $t \to +0$. 
Hence if $\gamma_1$ satisfies \eqref{e:DilKappa} for $\K_s^1(\R)$ with $\kappa>0$, then $\kappa$ should satisfy $\kappa \le 2$.

\section*{Acknowledgments}
The author would like to thank Professors Shin-ichi Ohta and Shohei Nakamura for helpful comments. 
This work was supported partially by JST, ACT-X Grant Number JPMJAX200J, Japan,
and JSPS Kakenhi grant number 22J10002.


\begin{thebibliography}{99}
\bibitem{ADSZ10}M. Avriel, W. E. Diewert, S. Schaible, I. Zang, Generalized Concavity, Society for Industrial and Applied Mathematics, 2010.

\bibitem{BGL14}D. Bakry, I. Gentil and M. Ledoux, Analysis and geometry of Markov diffusion operators, Grundlehren der Mathematischen Wissenschaften, 348, Springer, Cham, 2014.


	\bibitem{B99}S. G. Bobkov, Isoperimetric and analytic inequalities for log-concave probability measures, Ann. Probab. {\bf 27} (1999), no. 4, 1903--1921.
	\bibitem{B03}S. G. Bobkov, Large deviations via transference plans, Advances in mathematics research, Vol. 2, 151--175, Adv. Math. Res., {\bf 2}, Nova Sci. Publ., Hauppauge, NY, 2003.
	\bibitem{B07}S. G. Bobkov, Large deviations and isoperimetry over convex probability measures with heavy tails, Electron. J. Probab. {\bf 12} (2007), 1072--1100.
	\bibitem{Bo07} S. G. Bobkov, On isoperimetric constants for log-concave probability distributions, Geometric aspects of functional analysis, 81--88, Lecture Notes in Math., 1910, Springer, Berlin, 2007.
	\bibitem{BH97}S. G. Bobkov and C. Houdr\'{e}, Isoperimetric constants for product probability measures, Ann. Probab. {\bf 25} (1997), no. 1, 184--205.
	\bibitem{BN07}S. G. Bobkov and F. Nazarov, Sharp dilation-type inequalities with fixed parameter of convexity, Zap. Nauchn. Sem. S.-Peterburg. Otdel. Mat. Inst. Steklov. (POMI) 351 (2007), Veroyatnost'i Statistika. {\bf 12}, 54--78, 299; reprinted in J. Math. Sci. (N.Y.) {\bf 152} (2008), no. 6, 826--839.
	\bibitem{Bor74}C. Borell, Convex measures on locally convex spaces, Ark. Mat. {\bf 12} (1974), 239--252.
	\bibitem{Bor75}C. Borell, Convex set functions in $d$-space, Period. Math. Hungar. {\bf 6} (1975), no. 2, 111--136.
	\bibitem{BGVV14}S. Brazitikos, A. Giannopoulos, P. Valettas and B.-H. Vritsiou, Geometry of isotropic convex bodies, Mathematical Surveys and Monographs, 196. American Mathematical Society, Providence, RI, 2014.
	\bibitem{Bu82} P. Buser, A note on the isoperimetric constant, Ann. Sci. \'{E}cole Norm. Sup. (4) {\bf 15} (1982), no. 2, 213--230.
\bibitem{Ch70}J. Cheeger, A lower bound for the smallest eigenvalue of the Laplacian, Problems in analysis, pp. 195--199. Princeton Univ. Press, Princeton, N.J., 1970.
\bibitem{CT06} T. M. Cover and J. A. Thomas, Elements of Information Theory, 2nd ed., Wiley-Interscience, New York, 2006.
	\bibitem{Fr09}M. Fradelizi, Concentration inequalities for s-concave measures of dilations of Borel sets and applications, Electron. J. Probab. {\bf 14} (2009), no. 71, 2068--2090.
\bibitem{Go22} N. Gozlan, The deficit in the Gaussian log-Sobolev inequality and inverse Santal\'{o} inequalities, Int. Math. Res. Not. IMRN 2022, no. 17, 13396--13446.
	\bibitem{Gu99}O. Gu\'{e}don, Kahane-Khinchine type inequalities for negative exponent, Mathematika {\bf 46} (1999), no. 1, 165--173.
	\bibitem{GNT14}O. Gu\'{e}don, P. Nayar and T. Tkocz, Concentration inequalities and geometry of convex bodies, Analytical and probabilistic methods in the geometry of convex bodies, 9--86, IMPAN Lect. Notes, 2, Polish Acad. Sci. Inst. Math., Warsaw, 2014.
	\bibitem{KLS95}R. Kannan, L. Lov\'{a}sz, M. Simonovits, Isoperimetric problems for convex bodies and a localization lemma, Discrete Comput. Geom. {\bf 13} (1995), no. 3-4, 541--559.
	\bibitem{Kl17}B. Klartag, Needle decompositions in Riemannian geometry, Mem. Amer. Math. Soc. {\bf 249} (2017), no. 1180.
	\bibitem{Kl23}B. Klartag, Logarithmic bounds for isoperimetry and slices of convex sets, arXiv:2303.14938.
	
	\bibitem{L99}R. Lata\l a, On the equivalence between geometric and arithmetic means for log-concave measures, Convex geometric analysis (Berkeley, CA, 1996), 123-127, Math. Sci. Res. Inst. Publ., {\bf 34}, Cambridge Univ. Press, Cambridge, 1999.
	
	\bibitem{LO99} R. Lata\l a and K. Oleszkiewicz, Gaussian measures of dilatations of convex symmetric sets. Ann. Probab. {\bf 27} (1999), no. 4, 1922--1938.
	
	\bibitem{Le94} M. Ledoux, A simple analytic proof of an inequality by P. Buser, Proc. Amer. Math. Soc. {\bf 121} (1994), no. 3, 951--959.

	\bibitem{LS93}L. Lov\'{a}sz and M. Simonovits, Random walks in a convex body and an improved volume algorithm, Random Structures Algorithms {\bf 4} (1993), no. 4, 359--412.
 \bibitem{Ma62-1}V. G. Maz'ya, The negative spectrum of the higher-dimensional Schr\"{o}dinger operator, Dokl. Akad. Nauk SSSR {\bf 144} 1962 721--722.
 \bibitem{Ma62-2} V. G. Maz'ya, On the solvability of the Neumann problem, Dokl. Akad. Nauk SSSR {\bf 147} 1962 294--296.
 \bibitem{Mi23} E. Milman, Reverse H\"{o}lder inequalities for log-Lipschitz functions, 
Pure Appl. Funct. Anal. {\bf 8} (2023), no. 1, 297--310.
 \bibitem{MS} V. D. Milman, G. Schechtman, Asymptotic Theory of Finite Dimensional Normed Spaces, Springer-Verlag, New York (1986).
	\bibitem{NSV02}F. Nazarov, M. Sodin and A. Volberg, The geometric Kannan-Lov\'{a}sz-Simonovits lemma, dimension-free estimates for the distribution of the values of polynomials, and the distribution of the zeros of random analytic functions, Algebra i Analiz 14 (2002), no. 2, 214--234; translation in St. Petersburg Math. J. {\bf 14} (2003), no. 2, 351--366.
 \bibitem{S}B. Simon, Convexity. An analytic viewpoint, Cambridge Tracts in Mathematics, 187. Cambridge University Press, Cambridge, 2011. 
	\bibitem{Ts21}H. Tsuji, Dilation type inequalities for strongly-convex sets in weighted Riemannian manifolds. Anal. Geom. Metr. Spaces {\bf 9} (2021), no. 1, 219--253.
	 \bibitem{Vi03} C. Villani, Topics in optimal transportation, Graduate Studies in Mathematics, 58. American Mathematical
Society, Providence, RI, 2003.
 \bibitem{Vi09}C. Villani, Optimal transport, old and new, Springer-Verlag, Berlin, 2009.
\end{thebibliography}
\end{document}